\newtheorem{thm}{Theorem}[section]
\newtheorem{lem}[thm]{Lemma}
\newtheorem{prop}[thm]{Proposition}
\newtheorem{rem}[thm]{Remark}
\newcommand{\R}{{\mathbb{R}}}
\newcommand{\N}{{\mathbb{N}}}
\numberwithin{equation}{section}
\begin{document}

\subjclass[2000]{35J61; 35B50; 35B06; 35J47; }

\parindent 0pc
\parskip 6pt
\overfullrule=0pt

\title{Classification of solutions to Hardy-Sobolev Doubly Critical Systems}

\author {Francesco Esposito$^{*}$, Rafael L\'opez-Soriano$^\#$, Berardino Sciunzi$^{*}$}

\date{\today}

\date{\today}

\address{* Dipartimento di Matematica e Informatica, UNICAL,
Ponte Pietro  Bucci 31B, 87036 Arcavacata di Rende, Cosenza, Italy.}

 \email{francesco.esposito@unical.it, sciunzi@mat.unical.it}
 
 \address{$\#$ Departamento de Análisis Matem\'atico,
Universidad de Granada, Campus Fuentenueva, 18071 Granada, Spain.}
 
 \email{ralopezs@ugr.es}

\keywords{Semilinear elliptic systems, Hardy potentials, qualitative properties, critical nonlinearities}

\thanks{F. Esposito and B. Sciunzi are partially supported by PRIN project 2017JPCAPN (Italy):
	{\em Qualitative and quantitative aspects of nonlinear PDEs.} Moreover they are members of INdAM. 
	R. López-Soriano is currently supported by the grant Juan de la Cierva Incorporación fellowship (JC2020-046123-I), funded by MCIN/AEI/10.13039/501100011033, and by the European Union Next Generation EU/PRTR. He is also partially supported by Grant PID2021-122122NB-I00 funded by MCIN/AEI/ 10.13039/501100011033 and by ``ERDF A way of making Europe''.}

\begin{abstract}
This work deals with a family of Hardy-Sobolev doubly critical system defined in $\mathbb{R}^n$. More precisely, we provide a classification of the positive solutions, whose expressions comprise multiplies of solutions of the decoupled scalar equation. Our strategy is based on the symmetry of the solutions, deduced via a suitable version of the moving planes technique for cooperative singular systems, joint with the study of the asymptotic behavior by using the Moser's iteration scheme.
\end{abstract}

\maketitle

\date{\today}

\section{Introduction}
We are concerned with the study of the doubly critical system:
\begin{equation}\tag{$\mathcal{S}^*$}\label{eq:doublecriticalsingularequbdd}
\begin{cases}
\displaystyle -\Delta u\,=\gamma \frac{u}{|x|^2} + u^{2^*-1}+ \nu \alpha u^{\alpha-1} v^\beta & \text{in}\quad\R^n \vspace{0.2cm} \\
\displaystyle -\Delta v\,=\gamma \frac{v}{|x|^2} + v^{2^*-1}+ \nu \beta u^\alpha v^{\beta-1} & \text{in}\quad\R^n \vspace{0.2cm} \\
u,v > 0 &  \text{in}\quad\R^n \setminus \{0\},
\end{cases}
\end{equation}
where $\gamma \in [0, \Lambda_n)$ with $\Lambda_n =\left( \frac{n-2}{2}\right)^2$ the best constant in the Hardy’s inequality for $n \geq 3$, $2^*=\dfrac{2n}{n-2}$ is the critical exponent in the Sobolev embedding, $\nu>0$ is the coupling parameter and $\alpha, \beta > 1$ are real parameters satisfying 
$$\alpha+\beta=2^*.$$
Along this article we address the analysis of the classification of positive solutions to \eqref{eq:doublecriticalsingularequbdd}. \\
\noindent In the case $\gamma = \nu = 0$, system \eqref{eq:doublecriticalsingularequbdd} reduces to the classical Sobolev critical equation
\begin{equation}\label{eq:Sobolev}
	\begin{cases}
		\displaystyle -\Delta u\,=  u^{2^*-1}  & \text{in}\quad\R^n\\
		u > 0 &  \text{in}\quad\R^n.
	\end{cases}
\end{equation}
It is well-known that the so-called \textit{Aubin-Talenti bubbles}
\begin{equation}\label{eq:Talentiane}
	\mathcal{V}_{\lambda, x_0} (x) := \left[\frac{\lambda \sqrt{n(n-2)}}{\lambda^2+|x-x_0|^2}\right]^{\frac{n-2}{2}}
\end{equation}
solve \eqref{eq:Sobolev}, where  $\lambda> 0$ and it is centered at $x_0 \in \R^n$. Moreover, they realize the equality in the sharp Sobolev inequality in $\R^n$. In \cite{GNN2}, the authors proved that any positive solution $u$ to \eqref{eq:Sobolev}, such that $u(x) = O\left(1/|x|^m\right),$ at infinity for $m>0$, is radially symmetric and decreasing about some point in $\R^n$. In the proof the authors used a refinement of the celebrated moving plane procedure developed by themselves in a previous paper. Later on, Caffarelli, Gidas and Spruck in \cite{CGS} classified all the solutions to \eqref{eq:Sobolev}. Making use of the Kelvin transform, the authors showed that the moving plane procedure can start.
Finally, they showed that all the solutions $u \in H^1_{loc}(\R^n)$ to \eqref{eq:Sobolev} are given by the Aubin-Talenti bubbles \eqref{eq:Talentiane}, hence solutions are unique up to rescaling. Recently, these results were also generalized in the case of critical equations involving the $p$-Laplacian and the Finsler anisotropic operator, where the use of the Kelvin transform is not-allowed, see e.g.~\cite{CatMonRon, CFR, Ou, Sciu16, Vet, Vet2}.

When $\gamma \neq 0$ and $\nu = 0$,  system \eqref{eq:doublecriticalsingularequbdd} becomes the so-called Hardy-Sobolev doubly critical equation
\begin{equation}\label{eq:terracini}
	\begin{cases}
	-\Delta u \,=\gamma \frac{u}{|x|^2} + u^{2^*-1} \quad & \text{in}\quad\R^n \\
		u > 0 &  \text{in}\quad\R^n \setminus \{0\}.
 \end{cases}
\end{equation}
Terracini in \cite{terracini},  by means of variational arguments and the concentration compactness principle, showed the existence of solutions to \eqref{eq:terracini}, actually a more general one. Moreover, by using the Kelvin transformation and a fine use of the moving plane method, she proved that any solution to \eqref{eq:terracini} is radially symmetric about the origin. Finally, thanks to a detailed ODE's analysis, she gave a complete classification of the solutions to \eqref{eq:terracini}, given by
\begin{equation}\label{eq:terraciniane}
	\mathcal{U}_\mu(x)=\mu^{\frac{2-n}{2}}\mathcal{U}\left(\frac{x}{\mu}\right) \qquad \mbox{ with } \qquad \mathcal{U}(x)= \dfrac{A(n,\gamma)}{|x|^{\tau}\left(1+|x|^{2-\frac{4\tau}{n-2}}\right)^{\frac{n-2}{2}}},
\end{equation}
where 
\begin{equation}\label{eq:agamma}
	\tau:=\frac{n-2}{2}-\sqrt{\left( \frac{n-2}{2}\right)^2-\gamma}, \qquad A(n,\gamma):=\left(\frac{n(n-2-2\tau)^2}{n-2}\right)^{\frac{n-2}{4}},
\end{equation}
and $\mu>0$ is a scaling factor. Obviously, $\mathcal{U}_\mu=\mathcal{V}_{\mu,0}$ if $\gamma=0$.

The case of $p$-Lapalce operator was firstly treated in \cite{abdellaoui}, where the authors carried out a very fine ODE's analysis. The radial symmetry of the solutions there was an assumption. Later on, in \cite{OSV} the authors showed that all the positive solutions to $p$-Laplace doubly critical equation are radial (and radially decreasing) about the origin. Once the radial symmetry of the solution is proved it is easy to derive the associated ordinary differential equation fulfilled by the solution $u = u(r)$ and, hence, to apply the results in \cite{abdellaoui}. 

>From now on we focus our attention to the case of systems. Nonlinear Schr\"odinger problems, like the Gross–Pitaevskii type systems, have a strong connection with some physical phenomena. Such  problem appears in the study of Hartree–Fock theory for double condensates, that is a binary mixture Bose–Einstein condensates in two different hyperfine states which overlap in space, see \cite{Esry, Frantz} for further details. That type of systems arises also in nonlinear optics. Actually, it allows one to study the propagation of pulses in birefringent optical fibers and the beam in Kerr-like photorefractive media, see \cite{Akhmediev,Kivshar} and references therein.
In particular, solitary-wave solutions to the coupled Gross–Pitaevski equations satisfies the problem
\begin{equation}\label{eq:pBSsystem}
\left\{\begin{array}{ll}
\displaystyle{-\Delta u + V(x) u= \mu u^{2p-1} + \nu u^{p-1} v^{p}}  &\text{in }\mathbb{R}^n\vspace{.3cm}\\
-\Delta v + V(x) v = \mu v^{2p-1} + \nu u^{p}v^{p-1} &\text{in }\mathbb{R}^n,
\end{array}\right.
\end{equation}
where $V$ is the system potential and $1<p\le\frac{2^*}{2}$. This problem is typically known as the Bose–Einstein condensate system. For the subcritical regime, we refer to \cite{ambrosetti,bartsch2,lin1,sirakov, nicola2} for some results concerning existence and multiplicity of solutions under different assumptions on $V$ and $\nu$.

Concerning the critical case $p = \frac{2^*}{2}$, if $V$ is  non-zero constant, then  system \eqref{eq:pBSsystem} admits only the trivial solution $(0,0)$. This result follows from a proper application of the Pohozaev-type identity. For $V=0$, in \cite{Wang} the authors showed the uniqueness of the ground states, under suitable assumptions on the parameters of the generalized system; whereas in the paper \cite{pistoia} the competitive setting is considered, i.e. $\nu<0$, deducing that the system admits infinitely many fully nontrivial solutions, which are not conformally equivalent.

As a non-constant potential, from now on we shall consider the aforementioned Hardy-type one $V=-\frac{\gamma}{|x|^2}$. Under that choice, the problem \eqref{eq:doublecriticalsingularequbdd} can be also seen  as an extension of \eqref{eq:pBSsystem}. The mathematical interest in such system lies in their double criticality, since both the exponent of the nonlinearities and the singularities share the same order of homogeneity as the Laplacian. Moreover, inverse square potentials arise in some physical prototypes, such as nonrelativistic quantum mechanics, molecular physics or combustion models.

Doubly critical problems has attracted attention in recent years. In the pioneer work \cite{AFP}, a general Hardy-Sobolev type system is studied by means of variational techniques. In the cooperative regime $\nu>0$, the existence of ground and bound states are obtained depending on $\alpha, \beta$, $n$ and a potential function $h$ arising in the coupling term. Recently, that kind of results were extended in \cite{EduRafaAle} by using similar strategies.  Such doubly critical system is widely analyzed in \cite{ChenZou3}. Actually, the non-existence of ground states for the competitive case is indeed proved.

The aim of this paper is to classify all the positive solutions to  problem \eqref{eq:doublecriticalsingularequbdd}, via the study of symmetry and monotonicity properties in the same spirit of the aforementioned papers \cite{CGS, terracini}. 

Hence, we state one of the main results of our paper:
\begin{thm}\label{thm:main2}
	Let $(u,v) \in D^{1,2}(\mathbb{R}^n) \times D^{1,2}(\mathbb{R}^n)$
	be a solution to \eqref{eq:doublecriticalsingularequbdd}. 
	%
	%
	Then,
	\begin{equation}\label{eq:sinchronizedSol}
		(u,v)=\left(c_1 \, \mathcal{U}_{\mu_0}, c_2 \, \mathcal{U}_{\mu_0}\right),
	\end{equation}
	with $\mathcal{U}_\mu$ introduced in \eqref{eq:terraciniane}, $\mu_0>0$ and $c_1,c_2$ are any positive constants satisfying the system
	\begin{equation}\label{eq:systemconstants}
		\left\{\begin{array}{ll}
			\displaystyle{c_1^{2^*-2}+\nu \alpha c_1^{\alpha-2}c_2^\beta}=1  & \vspace{.3cm}\\
			\displaystyle{c_2^{2^*-2}+\nu \beta c_1^{\alpha}c_2^{\beta-2}=1}. &
		\end{array}\right.
	\end{equation}
\end{thm}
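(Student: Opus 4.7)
The plan is to combine three ingredients: sharp asymptotic estimates at the singular points of the domain, a moving planes argument adapted to cooperative singular systems, and a final reduction of the coupled problem to the scalar Hardy–Sobolev critical equation \eqref{eq:terracini} classified by Terracini.

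\textbf{Step 1 (Asymptotic behavior via Moser iteration).} First I would perform a Moser-type iteration on each of the two equations in \eqref{eq:doublecriticalsingularequbdd}, using Hardy's inequality to absorb the singular potential $\gamma/|x|^{2}$ and exploiting $u,v\in D^{1,2}(\R^{n})$ together with the critical scaling. This should give the two-sided pointwise bounds
\[
u(x),\,v(x) = O\bigl(|x|^{-\tau}\bigr)\ \text{ as } |x|\to 0,\qquad u(x),\,v(x) = O\bigl(|x|^{\tau-(n-2)}\bigr)\ \text{ as } |x|\to\infty,
\]
with $\tau$ as in \eqref{eq:agamma}, matching the rate of the Terracini bubble \eqref{eq:terraciniane}.

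\textbf{Step 2 (Kelvin transform and moving planes).} The Kelvin inversion $\tilde u(x)=|x|^{2-n}u(x/|x|^{2})$ and $\tilde v$ analogously preserves \eqref{eq:doublecriticalsingularequbdd}, since the critical exponent $2^{*}$, the Hardy term, and the homogeneous coupling all carry the correct conformal weight. Thanks to Step~1 the transformed pair $(\tilde u,\tilde v)$ is well-controlled near the origin, which allows the moving planes procedure to be started from infinity. Since $\nu>0$ the system is cooperative; I would then invoke the version of the moving planes method for singular cooperative systems developed earlier in the paper, applied along every direction $e\in\mathbb{S}^{n-1}$, and use the fact that the Hardy potential breaks translation invariance and forces the limiting hyperplane of symmetry in each direction to pass through $0$. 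The output is joint radial symmetry and monotonicity: $u=u(|x|)$, $v=v(|x|)$, both decreasing.

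\textbf{Step 3 (Synchronization and scalar reduction).} With $u,v>0$ radial, set $\phi:=v/u$. A direct computation using both equations yields the divergence-form identity
\[
-\operatorname{div}\bigl(u^{2}\nabla\phi\bigr)=u^{2^{*}}\Bigl[\phi\bigl(\phi^{2^{*}-2}-1\bigr)+\nu\,\phi^{\beta-1}\bigl(\beta-\alpha\phi^{2}\bigr)\Bigr].
\]
By Step~1, $\phi$ is bounded with finite positive limits at $0$ and $\infty$. A weak maximum/minimum principle applied to the above equation, combined with a sign analysis of the algebraic bracket $H(\phi):=\phi(\phi^{2^{*}-2}-1)+\nu\phi^{\beta-1}(\beta-\alpha\phi^{2})$, should force $\phi\equiv\phi_{0}$ constant. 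Substituting $v=\phi_{0}u$ into the first equation of \eqref{eq:doublecriticalsingularequbdd} leaves
\[
-\Delta u=\gamma\,\frac{u}{|x|^{2}}+\bigl(1+\nu\alpha\phi_{0}^{\beta}\bigr)u^{2^{*}-1};
\]
after a constant rescaling $u\to c_{1}w$ normalizing the coefficient, $w$ solves the scalar Hardy–Sobolev critical equation \eqref{eq:terracini}, and Terracini's classification gives $w=\mathcal{U}_{\mu_{0}}$ for some $\mu_{0}>0$, whence $v=c_{2}\,\mathcal{U}_{\mu_{0}}$ with $c_{2}=\phi_{0}c_{1}$. Inserting this ansatz back into both equations of \eqref{eq:doublecriticalsingularequbdd} produces precisely the algebraic system \eqref{eq:systemconstants}.

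\textbf{Main obstacle.} I expect Step~3 to be the most delicate point: even with radial symmetry in hand, the argument reducing $\phi=v/u$ to a constant rests on a careful sign analysis of $H(\phi)$ across the admissible parameter range $(\alpha,\beta,\nu,\gamma)$ and on a max/min principle which has to accommodate the fact that the extrema of $\phi$ may be attained only as limits at the singular points $|x|=0$ and $|x|=\infty$ rather than at interior points, so that the asymptotics from Step~1 must be combined with the divergence-form equation in a testing argument.
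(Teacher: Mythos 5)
Your Step 3 contains the genuine gap. The divergence identity $-\operatorname{div}(u^{2}\nabla\phi)=u^{2^{*}}H(\phi)$ for $\phi=v/u$ is correct, but the maximum/minimum principle you invoke only yields $H(\sup\phi)\geq 0$ and $H(\inf\phi)\leq 0$ (with the extrema possibly attained at $0$ or $\infty$). This forces $\phi$ to be constant only when $H$ has a \emph{single} zero, crossed from positive to negative values. For the parameters admitted in the theorem this is false in general: since $f(s)=-s^{2^{*}-1}H(1/s)$, the zeros of $H$ are in bijection with the roots of the function $f$ in \eqref{eq:f}, equivalently with the solutions of \eqref{eq:systemconstants}, and these can be multiple (for $n=3$, $\nu=1$, $\alpha=\beta=3$ there are three, see Remark~\ref{rem:number}). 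When $H$ has several zeros the sign of $H$ alternates, and $\inf\phi$, $\sup\phi$ can lie in different sign regions with no contradiction, so the sign analysis does not pin $\phi$ to one constant. This multi-root scenario is precisely the difficulty the paper emphasizes as the hardest point; it is handled there not by a maximum principle for $v/u$ but by the logarithmic change of variables $y_u(t)=r^{\delta}u(r)$, $y_v(t)=r^{\delta}v(r)$, a one-dimensional moving-plane argument (Lemma~\ref{lem:extremums}) showing that $y_u$ and $y_v$ attain a simultaneous global maximum, and the Wronskian and energy integral identities of Proposition~\ref{prop:synchro}, with a separate treatment of the cases $f(L)\neq 0$ and $f(L)=0$, where $L=\lim_{t\to-\infty}y_u/y_v$. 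To make your route work you would have to replace the sign analysis of $H$ by an argument of comparable strength in the multi-root case; as written, the step fails exactly where the theorem is delicate.

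Two further points are weaker than claimed but more repairable. First, Moser iteration as you describe it produces only upper bounds; the strictly positive limits of $|x|^{\tau_1}u$, $|x|^{\tau_1}v$ at the origin (and of $|x|^{\tau_2}u$, $|x|^{\tau_2}v$ at infinity), which you use to say that $\phi$ has finite positive limits, require an extra argument — in the paper they come after radial symmetry from the monotonicity of $r^{n-1-2\tau}u_{\tau}'$ in the radial ODE together with the strong maximum principle and the Kelvin transform (Proposition~\ref{prop:asBehav}). Second, your moving-plane Step 2 ignores that the coupling $u^{\alpha-1}v^{\beta}$ is not locally Lipschitz when $\alpha<2$ or $\beta<2$, which necessarily occurs for $n>4$; this is why the paper works with the translated problem $(\mathcal{S}^{*}_{x_0})$, proves the positive lower bound of Lemma~\ref{lem:positivity} near the singular points, and controls the non-Lipschitz difference quotients as in Remark~\ref{fsdfd}. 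Any version of your symmetry step must incorporate analogous ingredients.
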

\begin{rem}
This result holds (and is new)  also in the case  $\gamma=0$.  Under that assumption the explicit solutions in \eqref{eq:terraciniane} reduce to those of \eqref{eq:Talentiane}. See \cite{ChenLi} for a result related to such a case.
\end{rem}

\begin{rem}
	As a consequence of Theorem \ref{thm:main2}, we can give an explicit expression of the solution. Observe that the system \eqref{eq:systemconstants} might admit several solutions depending on the values $n, \nu, \alpha, \beta$. For instance, if $n=3$, $\nu=1$, $\alpha=3=\beta$, there exist three synchronized solutions; whereas if $n=4$, $\nu=1$, $\alpha=2=\beta$ there exists one synchronized solution. Concerning the uniqueness of synchronized solutions we refer \cite{Wang} for more details.
\end{rem}

\begin{rem}\label{rem:number}
	From a pure mathematical point of view, let us emphasize that, throughout the paper, a very deep and crucial issue is represented by those cases in which either $\alpha<2$, $\beta<2$ or both. In such a case, in fact, the coupling term is non locally Lipschitz continuous. Note that this necessarily occurs if $n>4$. In low dimension, however, all the situations are possible.
\end{rem}

\noindent A first important step in our strategy  is the proof of the fact that
 the solutions to \eqref{eq:doublecriticalsingularequbdd} are  radially symmetric about the origin. 
 Actually we shall provide a more general result  and, in particular, holds without requiring global energy information.

\begin{thm}\label{thm:main1}
	Let $(u,v) \in \left( H^1_{loc}(\mathbb{R}^n) \cap C(\R^n \setminus \{0\}) \right)\times \left(H^1_{loc}(\mathbb{R}^n) \cap C(\R^n \setminus \{0\}) \right)$ be solution to 
	\begin{equation}\tag{$\mathcal{S}^*_{\gamma_{1,2}}$}\label{eq:doublecriticalsingularequbdddiffgamma}
		\begin{cases}
			\displaystyle -\Delta u\,=\gamma_1 \frac{u}{|x|^2} + u^{2^*-1}+ \nu \alpha u^{\alpha-1} v^\beta & \text{in}\quad\R^n \vspace{0.2cm} \\
			\displaystyle -\Delta v\,=\gamma_2 \frac{v}{|x|^2} + v^{2^*-1}+ \nu \beta u^\alpha v^{\beta-1} & \text{in}\quad\R^n   \\
			u,v > 0 &  \text{in}\quad\R^n \setminus \{0\},
		\end{cases}
	\end{equation}
	where $\gamma_1,\gamma_2 \in [0,\Lambda_n)$. Then $(u,v)$ is radially symmetric about the origin.
\end{thm}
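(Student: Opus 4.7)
The plan is to run a coupled moving planes argument on the Kelvin transform of $(u,v)$, supplemented by a Moser iteration that compensates for the absence of global energy information. The overall strategy mimics Terracini's scalar argument in \cite{terracini}, but is carried out jointly for the two components of the cooperative system and with extra care when the coupling is non-locally-Lipschitz.

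First I would control the asymptotic behavior of $(u,v)$ at $0$ and at $\infty$ via Moser iteration. Component-wise, absorbing the cross-terms into $u^{2^*-1}+v^{2^*-1}$ by Young's inequality (legitimate because $\alpha+\beta=2^*$) reduces each equation to a scalar one with a critical potential; the $L^{n/2}$-smallness of this potential on small balls gives local $L^\infty$ bounds, and iterating/scaling would yield majorants of the form $u+v\le C|x|^{-\tau_*}$ near $0$ and $u+v\le C|x|^{\tau_*+2-n}$ at infinity, where $\tau_*\in[0,(n-2)/2)$ comes from the Hardy exponents in \eqref{eq:agamma}. I would then apply the Kelvin transform $\tilde u(x)=|x|^{2-n}u(x/|x|^2)$ and analogously for $v$. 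Since $\alpha+\beta=2^*$ and both $-\Delta$ and $|x|^{-2}$ are conformally covariant, $(\tilde u,\tilde v)$ solves the same system on $\R^n\setminus\{0\}$, and the decay estimates above translate into genuine power decay of $(\tilde u,\tilde v)$ at infinity. Because the problem is rotationally invariant, proving symmetry of $(\tilde u,\tilde v)$ with respect to the hyperplane $T_0=\{x_1=0\}$ (for every choice of the first direction) is equivalent to the desired radial symmetry of $(u,v)$.

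Then I would run the coupled moving planes. For $\lambda<0$, let $\Sigma_\lambda=\{x_1<\lambda\}$, $x^\lambda=(2\lambda-x_1,x')$, $\tilde u_\lambda(x)=\tilde u(x^\lambda)$, $w_\lambda=\tilde u-\tilde u_\lambda$, $z_\lambda=\tilde v-\tilde v_\lambda$. A direct computation on $\Sigma_\lambda$ shows that the extra Hardy contribution $\gamma_j\tilde u_\lambda(|x|^{-2}-|x^\lambda|^{-2})$ has the favorable sign (since $|x|>|x^\lambda|$ there), so on $\{w_\lambda>0\}\cup\{z_\lambda>0\}$ the pair $(w_\lambda^+,z_\lambda^+)$ satisfies a linear cooperative system
\[
-\Delta w_\lambda^+\le c_{11}w_\lambda^+ + c_{12}z_\lambda^+,\qquad -\Delta z_\lambda^+\le c_{21}w_\lambda^+ + c_{22}z_\lambda^+,
\]
with $c_{12},c_{21}\ge 0$ and $c_{ii}$ dominated by $\gamma_i|x|^{-2}$ plus critical-type weights. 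Testing against $(w_\lambda^+,z_\lambda^+)$, using the Hardy inequality (with room to spare since $\gamma_j<\Lambda_n$) to absorb the singular weights and the $L^{n/2}(\Sigma_\lambda)$-smallness of the critical weights (from the decay of $\tilde u,\tilde v$) to absorb the rest, gives $w_\lambda^+=z_\lambda^+=0$ for $-\lambda$ large. Defining $\bar\lambda=\sup\{\lambda<0 : w_\mu,z_\mu\le 0\ \forall\,\mu\le\lambda\}$ and arguing by contradiction if $\bar\lambda<0$ (via the strong maximum principle componentwise on each connected component of $\Sigma_{\bar\lambda}\setminus\{0^{\bar\lambda}\}$, plus a small-measure version of the same energy estimate) forces $\bar\lambda=0$. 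Reversing the direction then yields symmetry with respect to $T_0$.

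The hardest part, flagged in Remark~\ref{rem:number}, is the non-Lipschitz character of the coupling when $\alpha<2$ or $\beta<2$: the differences $\tilde u^{\alpha-1}\tilde v^\beta-\tilde u_\lambda^{\alpha-1}\tilde v_\lambda^\beta$ cannot be dominated by $|w_\lambda|+|z_\lambda|$ times a bounded weight, and must instead be controlled via a convexity/Bénilan–Brezis–Crandall-type inequality by $(\tilde u^{\alpha-2}+\tilde u_\lambda^{\alpha-2})\tilde v^\beta w_\lambda^+$ plus a symmetric term in $z_\lambda^+$, with weights that develop mild local singularities on $\{\tilde u=0\}\cup\{\tilde u_\lambda=0\}$. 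Keeping these weights in $L^{n/2}$ uniformly in $\lambda$, and verifying that both the weak comparison principle and the strong maximum principle remain valid for cooperative singular systems in this non-Lipschitz regime, will be the most delicate ingredient of the proof.
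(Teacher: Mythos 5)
Your overall moving-plane framework (Kelvin transform, half-space energy estimates, cooperative structure, sliding of $\bar\lambda$ to $0$) is sound, but it misses the key idea of the paper and leaves the genuinely hard step unresolved. You Kelvin-transform about the singularity, keeping the system invariant, and then must handle the non-Lipschitz coupling directly; you yourself flag this as ``the most delicate ingredient'' and propose to dominate $\tilde u^{\alpha-1}\tilde v^\beta-\tilde u_\lambda^{\alpha-1}\tilde v_\lambda^\beta$ by mean-value weights of the type $\tilde u_\lambda^{\alpha-2}\tilde v^\beta$ kept in $L^{n/2}(\Sigma_\lambda)$ uniformly in $\lambda$. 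This is precisely where the argument breaks with the information you have. First, the weights are not singular on $\{\tilde u=0\}\cup\{\tilde u_\lambda=0\}$ (both functions are positive away from the origin); the real issue is their decay at infinity. On $\{w_\lambda>0\}$ the mean-value point forces the weight $\min(\tilde u,\tilde u_\lambda)^{\alpha-2}\tilde v^\beta$ with $\alpha-2<0$, and the only a priori lower bound available for $\tilde u_\lambda$ far out is the superharmonicity bound $c|x_\lambda|^{2-n}$, while your Moser step only gives the upper bound $\tilde v\lesssim |x|^{2-n+\tau_*}$. The resulting weight decays like $|x|^{-4+\tau_*\beta}$, which fails to lie in $L^{n/2}$ near infinity as soon as $\tau_*\beta\ge 2$ (e.g.\ $\gamma$ close to $\Lambda_n$ and $\beta$ not small), so the absorption scheme cannot be closed with one-sided estimates. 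A matching lower bound $u\gtrsim|x|^{-\tau_1}$ at the origin would fix the exponent count, but such sharp two-sided asymptotics are only derived in the paper \emph{after} symmetry (Section \ref{sec3}), so invoking them here would be circular; in addition, your preliminary claim of local $L^\infty$ bounds from ``$L^{n/2}$-smallness of the potential'' overlooks that $|x|^{-2}\notin L^{n/2}_{loc}$, so even the upper rate $|x|^{-\tau_*}$ requires a genuinely weighted Brezis--Kato/Moser argument, not a standard one.

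The paper resolves exactly this obstruction by a translation trick that your proposal does not contain: the Hardy singularity is first moved to $x_0=(0,x_0')\neq 0$ and the Kelvin transform is taken about the origin, which is then a regular point of the translated problem. This destroys the invariance of the Hardy term --- the price is the weight $1/f_{x_0}$ with $f_{x_0}(x)=\big|x-x_0|x|^2\big|^2$, whose monotonicity and symmetry in $x_1$ (Lemma \ref{lem:monotonicity}) keep it compatible with the moving planes --- but it buys two-sided bounds $c|x|^{2-n}\le \hat u,\hat v\le C|x|^{2-n}$ at infinity (Lemma \ref{lem:asBehaviour}) and uniform positivity near the deleted points (Lemma \ref{lem:positivity}). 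With these, the non-Lipschitz quotient is controlled by the elementary difference-quotient bound of Remark \ref{fsdfd}, and the coupling is dominated by the critical weights $\hat u^{2^*-2},\hat v^{2^*-2}$, i.e.\ it is treated exactly like the pure power term, with no need for any delicate comparison principle in the non-Lipschitz regime. Without this (or an equally effective substitute providing two-sided control), your plan has a genuine gap at its central step.
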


The proof of the radial symmetry of the solutions is based on a fine adaptation of the moving plane procedure. The technique goes back to the seminal works of Alexandrov and Serrin \cite{A,serrin} and the well-known contributions by Berestycki-Nirenberg \cite{BN} and Gidas-Ni-Nirenberg \cite{GNN}. Originally, the technique was introduce to be performed in general domains providing partial monotonicity results near the boundary and symmetry for convex and symmetric domains. Regarding elliptic systems, the moving plane technique was adapted by Troy in \cite{troy}, where the cooperative case is analyzed. The procedure was also applied for semilinear systems in the half-space in \cite{Dan} and in the whole space by Busca and Sirakov in the work \cite{busca}. We refer the reader to \cite{pacef,pacef2, defig2,esposito, Farina, FarSciuSo, mitidieri1,RZ, nicola1} for other interesting contribution about elliptic systems in bounded or unbounded domains.

\
\noindent For the reader's convenience we describe the strategy of our proofs that turn out to be tricky somehow.
\subsection{The symmetry result}
The proof of Theorem \ref{thm:main1}, as recalled above, is based on the moving plane technique. Unfortunately the adaptation of the technique is not straightforward since we work in unbounded domains and the coupling term, in general, is not locally Lipschitz continuous.
\noindent We overcame such a difficulty studying a suitable translated problem. To obtain symmetry in the $x_1$-direction move the Hardy potential to
$$x_0=(0,x'_{0}) \in \R^n   \setminus \{0\}.$$ 
 Then we apply the Kelvin transformation. The translation argument allow us to deal with the presence of the Hardy potential. The problem is not invariant under this procedure but the equation that arises is not so bad and we succeed in the adaptation of the moving plane procedure. 
\subsection{The asymptotic analysis}
Once we know that the solutions are radially symmetric, we use a suitable transformation 
\begin{equation}\nonumber
	(u_\tau(x), v_\tau(x)) := \left(|x|^\tau u(x), |x|^\tau v(x) \right)\,,
\end{equation}
 for the right choice of $\tau>0$. The Moser iteration scheme, applyed to the transformed equation provides a first asymptotic information. Then the study of the associated ODE (together with the Kelvin transform)  allows us to deduce the precise asymptotic information at zero and at infinity. 

\subsection{The classification result}
Once we know the exact behavior of the solutions, we exploit the standard change of variable
\begin{equation} \nonumber
	\begin{split}
		y_u(t):=r^\delta u(r) \quad &\text{and} \quad y_v(t):=r^\delta v(r),\\
	\end{split}
\end{equation}
where $t:=\log r$ with $r>0$ and $\delta=\frac{n-2}{2}$. We prove that
\begin{center}
there exists $C>0$ such that $y_u= C y_v$
\end{center}
where $C$ is a zero of the function 
\begin{equation}\nonumber
	f(s)=s^{2^*-2}+\nu \alpha s^{\alpha-2}-1-\nu\beta s^{\alpha}.
\end{equation}
The number of roots of $f$ is equivalent to the solutions of the system \eqref{eq:systemconstants}, then such quantity gives us the number of synchronized solutions. Although in this final issue we are reduced to deal with an ODE analysis, the proof is  no longer standard. To the best of our knowledge, only the case of a single root of $f$ has been treated in the literature. Precisely, the hardest part is the case when the function $f(\cdot)$ has more than one zero, a possible issue, see Remark~\ref{rem:number}. When the proportionality of components $(y_u,y_v)$ is guaranteed, one can conclude the classification result by direct computation.

\section{Radial symmetry of the solutions, proof of Theorem \ref{thm:main1}} \label{sec2}

The aim of this section is to prove that any solution to \eqref{eq:doublecriticalsingularequbdddiffgamma} is radially symmetric about the origin. 
We shall therefore consider positive continuous (far from the origin) solutions 
 $(u,v)\in H^1_{loc}(\R^n)\times H^1_{loc}(\R^n)$ such that
\begin{equation}\label{weaksolutions}
	\begin{split}
		\int_{\mathbb{R}^n} \langle \nabla u, \nabla
		\Phi \rangle\,dx\,=\,\gamma_1 \int_{\R^n} \frac{u}{|x|^2} \Phi\,dx +\int_{\mathbb{R}^n} {u}^{2^*-1}\Phi\,dx + \nu \alpha\int_{\mathbb{R}^n} {u}^{\alpha-1} {v}^\beta \Phi\,dx,& \vspace{0.2cm}\\
		\displaystyle \int_{\mathbb{R}^n} \langle \nabla {v}, \nabla
		\Psi \rangle \,dx\,=\gamma_2 \int_{\R^n} \frac{ v}{|x|^2} \Psi\,dx +\,\int_{\mathbb{R}^n} {v}^{2^*-1}\Psi\,dx + \nu \beta \int_{\mathbb{R}^n} {u}^\alpha {v}^{\beta-1}	\Psi \,dx,&
	\end{split}
\end{equation}
for any $\Phi, \Psi\in C^{1}_c(\mathbb{R}^n)$ and $\gamma_1, \gamma_2 \in [0, \Lambda_n)$.
To prove Theorem \ref{thm:main1}, we need to fix some notations. For any real number $\lambda$ we set
\begin{equation}\label{eq:sn2} \nonumber
\Sigma_\lambda=\{x\in \R^n:x_1 <\lambda\} 
\end{equation}
\begin{equation}\label{eq:sn3} \nonumber
x_\lambda= R_\lambda(x)=(2\lambda-x_1,x_2,\ldots,x_n)
\end{equation}
which is the reflection through the hyperplane $T_\lambda :=\{ x_1=
\lambda\}$. Moreover, given any function $w$,  we will set 
\begin{equation}\label{eq:sn4} \nonumber  
w_{\lambda} := w \circ R_{\lambda},
\end{equation}
namely the reflected function. A crucial ingredient in our proof is the use of a translation argument. 
We fix
$$x_0=(0,x'_{0}) \in \R^n \setminus  \{0\}$$
and we assume, by translation, that $(u,v)$ solves
\begin{equation}\tag{$\mathcal{S}^*_{x_0}$}\label{eq:doublecriticalsingularequbddshift}
	\begin{cases}
		\displaystyle -\Delta u\,=\gamma_1 \frac{u}{|x-x_0|^2} + u^{2^*-1}+ \nu \alpha u^{\alpha-1} v^\beta & \text{in}\quad\R^n \vspace{0.2cm} \\
		\displaystyle -\Delta v\,=\gamma_2 \frac{v}{|x-x_0|^2} + v^{2^*-1}+ \nu \beta u^\alpha v^{\beta-1} & \text{in}\quad\R^n\\
		u,v > 0 &  \text{in}\quad\R^n \setminus \{x_0\}.
	\end{cases}
\end{equation}
This will allows us to take full advantage of the Kelvin transformation. In fact we set 
  $K: \R^n \setminus \{0\} \longrightarrow \R^n  \setminus \{0\} $ defined by 
$$K = K(x):=\frac{x}{|x|^2}.$$
Such a transformation is a well-known tool and, given any $(u,v)$ solution to \eqref{eq:doublecriticalsingularequbddshift}, its Kelvin transform  is defined as
\begin{equation}\label{E:kelv} \left(\hat{u}(x),\hat{v}(x)\right):=\left( \frac{1}{|x|^{n-2}} u\left(\frac{x}{|x|^2}\right), \frac{1}{|x|^{n-2}} v\left(\frac{x}{|x|^2}\right)\right) \quad
	x \in \R^n \setminus \{0,x_0\}.
\end{equation}
By direct computation it follows that $(\hat{u},\hat{v})$ weakly
satisfies 
\begin{equation}\tag{$\hat{\mathcal{S}}^*_{x_0}$}\label{eq:doublecriticalsingularequbddshiftKelv}
	\begin{cases}
		\displaystyle -\Delta \hat u\,=\gamma_1 \frac{\hat u}{f_{x_0}(x)} + \hat{u}^{2^*-1}+ \nu \alpha \hat{u}^{\alpha-1} \hat{v}^\beta & \text{in}\quad\R^n \vspace{0.2cm} \\
		\displaystyle -\Delta \hat{v}\,=\gamma_2 \frac{\hat{v}}{f_{x_0}(x)} + \hat{v}^{2^*-1}+ \nu \beta \hat{u}^\alpha \hat{v}^{\beta-1} & \text{in}\quad\R^n \vspace{0.2cm} \\
		\hat u,\hat v > 0 &  \text{in}\quad\R^n \setminus \{0,x_0\},
	\end{cases}
\end{equation}
where
\begin{equation}\label{eq:Hardyfunction}
	f_{x_0}(x):=\big|x-x_0|x|^2\big|^2.
\end{equation}
 Having in mind the last definition, we prove the following key lemma that will be used in the proof of the symmetry result.

\begin{lem} \label{lem:monotonicity} 
	Let $x_0 \in \R^n \setminus\{0\}$ be any fixed point such that $x_0=(0,x'_0)=(0,x_{0,2},\dots,x_{0,n})$. Then the function $f_{x_0}$ is increasing in the $x_1$-direction for any $x \in \R^n \setminus \Sigma_0$, decreasing in in the $x_1$-direction for any $x \in \Sigma_0$, and symmetric with respect to $T_0$.
\end{lem}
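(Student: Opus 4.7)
The plan is to reduce everything to an explicit scalar computation. Since $x_0=(0,x_0')$ has zero first coordinate, one can expand
$$f_{x_0}(x)=\bigl|x-x_0|x|^2\bigr|^2=|x|^2-2|x|^2\langle x,x_0\rangle+|x|^4|x_0|^2,$$
and crucially $\langle x,x_0\rangle=\langle x',x_0'\rangle$ does not depend on $x_1$. This is really the only structural fact being used — the variable $x_1$ enters $f_{x_0}$ only through $|x|^2=x_1^2+|x'|^2$.

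First I would compute $\partial_{x_1}f_{x_0}$ directly from the expansion above. Since $\langle x',x_0'\rangle$ is independent of $x_1$ and $\partial_{x_1}|x|^2=2x_1$, a one-line calculation yields
$$\frac{\partial f_{x_0}}{\partial x_1}(x)=2x_1\bigl(1-2\langle x',x_0'\rangle+2|x|^2|x_0'|^2\bigr).$$
So the claim on monotonicity will follow once the second factor is shown to be strictly positive.

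Next I would verify the positivity of that factor. By Cauchy--Schwarz, $\langle x',x_0'\rangle\le |x'||x_0'|\le |x||x_0'|$, hence setting $t:=|x||x_0'|\ge 0$,
$$1-2\langle x',x_0'\rangle+2|x|^2|x_0'|^2\ge 1-2t+2t^2=2\Bigl(t-\tfrac12\Bigr)^{2}+\tfrac12>0.$$
Combining with the previous display, the sign of $\partial_{x_1}f_{x_0}(x)$ is the sign of $x_1$, which gives the increasing behaviour on $\{x_1>0\}$ and the decreasing behaviour on $\{x_1<0\}$.

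Finally, the symmetry with respect to $T_0$ is immediate from the same expansion: reflecting $x_1\mapsto -x_1$ leaves $|x|^2$ invariant and leaves $\langle x,x_0\rangle=\langle x',x_0'\rangle$ untouched (again because the first component of $x_0$ vanishes), so every term of $f_{x_0}$ is invariant. Honestly the whole statement is a direct calculation; the only thing one needs to keep an eye on is that the quadratic $2t^2-2t+1$ stays strictly positive, which is where the freedom $\gamma_i\in[0,\Lambda_n)$ (via the Cauchy--Schwarz estimate) interacts cleanly. I do not foresee any real obstacle beyond bookkeeping.
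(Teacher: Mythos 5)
Your proposal is correct and follows essentially the same route as the paper: the same expansion of $f_{x_0}$, the observation that $\langle x,x_0\rangle$ is independent of $x_1$, the same formula for $\partial_{x_1}f_{x_0}$, and the same Cauchy--Schwarz estimate, with only a cosmetically different completion of the square ($2(t-\tfrac12)^2+\tfrac12$ versus the paper's $(1-|x_0||x|)^2+|x_0|^2|x|^2$) and an explicit remark on the $T_0$-symmetry, which the paper leaves implicit. The aside about the parameters $\gamma_i\in[0,\Lambda_n)$ is irrelevant here (the lemma is purely about the function $f_{x_0}$), but it does not affect the argument.
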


\begin{proof}
	 It is immediate to have that
	\begin{equation}
		\begin{split}
			f_{x_0}(x)&=\big|x-x_0|x|^2\big|^2 = \langle x-x_0|x|^2, x-x_0|x|^2\rangle\\
			&=|x|^2-2|x|^2\langle x, x_0 \rangle + |x_0|^2|x|^4\\
			&=|x|^2\left(1-2\langle x, x_0 \rangle + |x_0|^2|x|^2\right),
		\end{split}
	\end{equation}
	for every $x \in \R^n$. We observe that thanks to our assumptions, the term $\langle x,x_0\rangle$ does not depend on $x_1$. Hence, thanks to Schwarz inequality we deduce that for any $x \in \R^n \setminus \Sigma_0$ it holds
	\begin{equation}
	\begin{split}
		\frac{\partial f_{x_0}}{\partial x_1}(x)&=2x_1\left(1-2\langle x,x_0\rangle + 2|x_0|^2|x|^2\right)\\
		&\geq 2x_1\left(1-2|x| \cdot |x_0| + 2|x_0|^2|x|^2\right)\\
		&=2x_1\left[\left(1-|x_0| \cdot |x|\right)^2 + |x_0|^2 \cdot |x|^2\right] \geq 0.
	\end{split}
	\end{equation}
Analogously, we get $\frac{\partial f_{x_0}}{\partial x_1}(x) \leq 0$ for any $x \in \Sigma_0$.

\end{proof}

We shall prove a symmetry result regarding
 any solution to \eqref{eq:doublecriticalsingularequbddshiftKelv}, that translates into a symmetry result for the original problem  \eqref{eq:doublecriticalsingularequbdddiffgamma}.
 In particular, the couple $(\hat{u}, \hat{v})$  satisfies
\begin{equation}\label{eq:weakKelv3}
	\begin{array}{lr}
		\displaystyle \int_{\mathbb{R}^n} \langle \nabla \hat{u}, \nabla
		\Phi \rangle \,dx\,=\,\gamma_1 \int_{\R^n} \frac{\hat u}{f_{x_0}(x)} \Phi \,dx + \int_{\mathbb{R}^n} \hat{u}^{2^*-1}\Phi\,dx + \nu \alpha\int_{\mathbb{R}^n} \hat{u}^{\alpha-1} \hat{v}^\beta	\Phi \,dx, \\
		\\
		\displaystyle \int_{\mathbb{R}^n} \langle \nabla \hat{v}, \nabla
		\Psi \rangle \,dx\,=\,\gamma_2 \int_{\R^n} \frac{\hat v}{f_{x_0}(x)} \Psi\,dx + \int_{\mathbb{R}^n} \hat{v}^{2^*-1}\Psi\,dx + \nu \beta\int_{\mathbb{R}^n} \hat{u}^\alpha \hat{v}^{\beta-1}	\Psi \,dx,
	\end{array}
\end{equation}\\
for any $\Phi, \Psi\in C^{1}_c(\mathbb{R}^n\setminus \{0,x_0\})$, while reflecting through $T_\lambda$ we deduce that   $(\hat{u}_\lambda, \hat{v}_\lambda)$ satisfies
\begin{equation}\label{eq:weakKelv4}
	\begin{array}{lr}
		\displaystyle \int_{\mathbb{R}^n} \langle \nabla \hat{u}_\lambda, \nabla
		\Phi \rangle\,dx\,=\,\gamma_1 \int_{\R^n} \frac{\hat u_\lambda}{f_{x_0}(x_\lambda)} \Phi\,dx +\int_{\mathbb{R}^n} \hat{u}_\lambda^{2^*-1}\Phi\,dx + \nu \alpha\int_{\mathbb{R}^n} \hat{u}_\lambda^{\alpha-1} \hat{v}_\lambda^\beta	\Phi \,dx, \\
		
		\\
		\displaystyle \int_{\mathbb{R}^n} \langle \nabla \hat{v}_\lambda, \nabla
		\Psi \rangle \,dx\,=\gamma_2 \int_{\R^n} \frac{\hat v_\lambda}{f_{x_0}(x_\lambda)} \Psi\,dx +\,\int_{\mathbb{R}^n} \hat{v}_\lambda^{2^*-1}\Psi\,dx + \nu \beta \int_{\mathbb{R}^n} \hat{u}_\lambda^\alpha \hat{v}_\lambda^{\beta-1}	\Psi \,dx,
	\end{array}
\end{equation}
for any $\Phi, \Psi\in C^{1}_c(\mathbb{R}^n\setminus R_\lambda(\{0 , x_0\}))$.

\noindent A crucial point in all the paper, as recalled in the introduction, is represented by the fact that the coupling term may be not Lipschitz continuous at zero. To face this difficulty, we will also use the following:

\begin{lem} \label{lem:positivity}
	Let $(\hat{u},\hat{v})$ be a solution to \eqref{eq:doublecriticalsingularequbddshiftKelv}. Then, there exists $\bar R >0$ and $\mu>0$ such that 
	\begin{equation}
		\hat u(x) \geq \mu > 0 \quad \text{and} \quad \hat v (x) \geq \mu > 0 \quad \text{in } B_{\bar R}.
	\end{equation}
\end{lem}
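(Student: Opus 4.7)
The plan is to observe that after the Kelvin transform $\hat u$ and $\hat v$ are positive, weakly superharmonic functions in a punctured ball about the origin, and then to propagate a positive lower bound from the sphere $\partial B_{\bar R}$ into the interior by means of a barrier built from the fundamental solution of $-\Delta$.

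First I would note that $\gamma_1,\gamma_2\ge 0$, that $f_{x_0}$ is strictly positive on $\R^n\setminus\{0,x_0/|x_0|^2\}$, and that $\hat u,\hat v>0$ on their natural domain; hence the right-hand sides of both equations in \eqref{eq:doublecriticalsingularequbddshiftKelv} are nonnegative, and $\hat u,\hat v$ are weak supersolutions of $-\Delta w\ge 0$ there. I would then choose $\bar R>0$ so small that the Kelvin image of $x_0$ lies outside $\overline{B_{2\bar R}}$. On the compact sphere $\partial B_{\bar R}$ both $\hat u$ and $\hat v$ are then continuous and strictly positive, so
\[
m:=\min_{|x|=\bar R}\min\{\hat u(x),\hat v(x)\}>0.
\]

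The core step is a barrier argument. For $\varepsilon>0$, set $w_\varepsilon(x):=\hat u(x)+\varepsilon|x|^{2-n}$. Since $|x|^{2-n}$ is harmonic away from the origin, $w_\varepsilon$ remains a weak supersolution of $-\Delta w\ge 0$ on $B_{\bar R}\setminus\{0\}$, and $w_\varepsilon(x)\to+\infty$ as $x\to 0$. Given any $x\in B_{\bar R}\setminus\{0\}$, I would pick $\delta\in(0,|x|)$ so small that $\varepsilon\delta^{2-n}\ge m$. On the annulus $B_{\bar R}\setminus\overline{B_\delta}$ the function $w_\varepsilon$ lies in $H^1$, is continuous up to the boundary, and satisfies $w_\varepsilon\ge m$ on $\partial B_{\bar R}$ by the definition of $m$ and $w_\varepsilon\ge\varepsilon\delta^{2-n}\ge m$ on $\partial B_\delta$ by positivity of $\hat u$. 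The weak minimum principle then yields $w_\varepsilon(x)\ge m$, i.e.
\[
\hat u(x)\ \ge\ m-\varepsilon|x|^{2-n}.
\]
Letting $\varepsilon\to 0$ gives $\hat u\ge m$ on $B_{\bar R}\setminus\{0\}$; the identical argument applied to $\hat v$ concludes the proof with $\mu:=m$.

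The only delicate point is the isolated singularity at the origin, about which \emph{a priori} no information is available: the functions $\hat u,\hat v$ could oscillate, vanish, or blow up as $x\to 0$. The fundamental-solution barrier sidesteps this entirely, since it uses only positivity and weak superharmonicity on the punctured ball; in particular, the potentially singular Hardy terms $\gamma_i\hat w/f_{x_0}(x)$ and the coupling term (which, as emphasized in Remark~\ref{rem:number}, need not be locally Lipschitz) play no role beyond their nonnegativity.
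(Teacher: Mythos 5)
Your proposal is correct, but it handles the crux of the lemma --- the lack of any a priori information at the isolated singularity $x=0$ --- by a different mechanism than the paper. The paper's proof is purely variational: it invokes (as in \cite{EFS}) cut-off functions $\psi_\varepsilon$ vanishing in a neighbourhood of the origin with $\int|\nabla\psi_\varepsilon|^2<\varepsilon$ (the origin has zero $H^1$-capacity), takes $\Phi=(\mu-\hat u)^+\psi_\varepsilon^2\chi_{B_{\bar R}(0)}$ as test function in the weak formulation \eqref{eq:weakKelv3}, discards the nonnegative right-hand side exactly as you do, and after Young's inequality obtains $\int|\nabla(\mu-\hat u)^+|^2\psi_\varepsilon^2\le 4\mathcal{C}\varepsilon$, whence $(\mu-\hat u)^+\equiv 0$ as $\varepsilon\to0$. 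You instead use the classical fundamental-solution barrier $\hat u+\varepsilon|x|^{2-n}$ together with the weak minimum principle on annuli $B_{\bar R}\setminus\overline{B_\delta}$ and then let $\varepsilon\to0$; the common ingredients are the nonnegativity of the right-hand side (so only weak superharmonicity is used) and the strictly positive minimum of $\hat u,\hat v$ on the compact sphere $\partial B_{\bar R}$ chosen away from the second singular point. Your route is more elementary and pointwise, relying only on $n\ge 3$ and on the fact that $\hat u\in H^1\cap C$ up to the boundary of each annulus (which holds since the Kelvin transform is a diffeomorphism away from the origin, so a short density remark is needed to allow $H^1_0$ test functions in the comparison step); the paper's route avoids explicit barriers and comparison principles altogether, is robust for singular sets of zero capacity rather than just points, and reuses the same $\psi_\varepsilon$ device that appears throughout the moving plane argument, which keeps the paper's machinery uniform. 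Both proofs are complete and yield the same conclusion with $\mu$ taken as the boundary minimum.
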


\begin{proof}
	First of all, we note that $(\hat{u}, \hat{v})$ satisfies \eqref{eq:weakKelv3}.  Borrowing an idea contained in \cite{EFS}, we point out that, for every $ \varepsilon>0$, we can find a function $\psi_\varepsilon \in C^{0,1}(\R^N, [0,1])$
	such that
	$$\int_{\Sigma_\lambda} |\nabla \psi_\varepsilon|^2 <  \varepsilon$$
	and $\psi_\varepsilon = 0$ in an open neighborhood
	$\mathcal{B_{\varepsilon}}$ of $0$, with $\mathcal{B_{\varepsilon}} \subset
	\R^n$.  We note that there exists $\bar R> 0$ such that $x_0 \not \in B_{\bar{R}}(0)$. Since $\hat u > 0$ in $B_{\bar{R}}(0)\setminus \{0\}$, there exists $\mu > 0$ such that $\hat u > \mu > 0$. In the same way  $\hat v > \mu > 0$ on $\partial B_{\bar{R}}(0)$.  Hence, setting
	\begin{equation}\label{eq:cutoff}
		\Phi:= (\mu- \hat u)^+ \psi_\varepsilon^2 \chi_{B_{\bar R}(0)}\quad \text{and} \quad \Psi:= (\mu- \hat v)^+ \psi_\varepsilon^2 \chi_{B_{\bar R}(0)},
	\end{equation}
	 one can check that $\Phi, \Psi \in D^{1,2}(\R^n)$, and by density arguments we can choose these as test functions in \eqref{eq:weakKelv3}. For the reader convenience we make the computations just for the first equation of \eqref{eq:weakKelv3}. Hence. we are able to deduce that
		\begin{equation}\label{eq:weakKelv1bis}
		\begin{split}
			\displaystyle - \int_{B_{\bar R}(0)} |\nabla	(\mu - \hat u)^+|^2 \psi_\varepsilon^2\,dx\, + 2 \int_{B_{\bar R}(0)} \langle \nabla \hat u, \nabla \psi_\varepsilon \rangle (\mu - \hat u)^+ \psi_\varepsilon \geq 0.
		\end{split}
	\end{equation}
 Using the Young's inequality, we are able to get that
  	\begin{equation}\label{eq:weakKelv1tris}
  	\begin{split}
  		\displaystyle \int_{B_{\bar R}(0)} |\nabla	(\mu - \hat u)^+|^2 \psi_\varepsilon^2 \, dx & \leq  2 \int_{B_{\bar R}(0)} \langle \nabla \hat u, \nabla \psi_\varepsilon \rangle (\mu - \hat u)^+ \psi_\varepsilon \, dx\\
  		& \leq \frac{1}{2}\int_{B_{\bar R}(0)} |\nabla (\mu - \hat u)^+|^2 \psi_\varepsilon^2 \, dx + 2\int_{B_{\bar R}(0)} |\nabla \psi_\varepsilon|^2 [(\mu - \hat u)^+]^2 \,dx.
  	\end{split}
  \end{equation}
Finally, we have that 
  \begin{equation}\label{eq:weakKelv1poker}
  	\int_{B_{\bar R}(0)} |\nabla	(\mu - \hat u)^+|^2 \psi_\varepsilon^2 \, dx  \leq 4 \mathcal{C}  \int_{B_{\bar R}(0)} |\nabla \psi_\varepsilon|^2  \,dx \leq 4 \mathcal{C} \varepsilon.
  \end{equation}
Passing to the limit for $\varepsilon$ that goes to $0$ we get the thesis for $\hat u$. Arguing in a similar fashion, we obtain the same result for $\hat v$.

\end{proof}
The translation argument that we introduced allow us to deduce the following:
\begin{lem} \label{lem:asBehaviour}
	Let $(\hat{u},\hat{v})$ be a solution to \eqref{eq:doublecriticalsingularequbddshiftKelv}. Then, there exist $c_{\hat u}, C_{\hat u} >0$, $c_{\hat v},C_{\hat v}>0$ and $\tilde R := \tilde R (x_0) > 0$ such that
	\begin{equation}\label{eq:bbehaviour}
		\frac{c_u}{|x|^{n-2}} \leq \hat u(x) \leq \frac{C_u}{|x|^{n-2}} \quad \text{and} \quad \frac{c_v}{|x|^{n-2}} \leq \hat v (x) \leq \frac{C_v}{|x|^{n-2}} \quad \text{in } \R^n \setminus B_{\tilde R}(0). 
	\end{equation}
\end{lem}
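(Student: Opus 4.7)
The plan is to invoke the Kelvin identity
\[\hat u(x) = |x|^{2-n}\, u(x/|x|^2),\qquad \hat v(x) = |x|^{2-n}\, v(x/|x|^2),\]
from \eqref{E:kelv}, which converts the decay of $(\hat u,\hat v)$ as $|x|\to\infty$ into two-sided bounds for $(u,v)$ on a neighborhood of the origin. Writing $y:=x/|x|^2$, the regime $|x|\geq\tilde R$ corresponds to $|y|\leq 1/\tilde R$, so the claim reduces to producing positive uniform upper and lower bounds for $u$ and $v$ on a small closed ball centered at $0$.

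The key observation is that the translation performed earlier in the section has moved the Hardy singularity from $0$ to $x_0\neq 0$, so the origin is now a regular point of the shifted system \eqref{eq:doublecriticalsingularequbddshift}. Concretely, I would fix any $r\in(0,|x_0|)$ so that $\overline{B_r(0)}\subset\R^n\setminus\{x_0\}$. By the hypotheses of Theorem \ref{thm:main1}, transported through the translation, $(u,v)\in C(\R^n\setminus\{x_0\})\times C(\R^n\setminus\{x_0\})$ and $u,v>0$ on $\R^n\setminus\{x_0\}$, and this set contains $\overline{B_r(0)}$. Continuity on this compact set then yields finite strictly positive extrema
\[0<c_u:=\min_{\overline{B_r(0)}} u\leq \max_{\overline{B_r(0)}} u =: C_u<\infty,\]
and analogous quantities $c_v,C_v$ for $v$.

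Setting $\tilde R:=1/r$, for any $|x|\geq\tilde R$ one has $y=x/|x|^2\in\overline{B_r(0)}$, hence $c_u\leq u(y)\leq C_u$ and $c_v\leq v(y)\leq C_v$. Multiplying these inequalities by $|x|^{2-n}$ and invoking the Kelvin identity produces precisely \eqref{eq:bbehaviour}. There is no real obstacle in this argument: the step that normally requires work, namely strict positivity of $u,v$ near $0$ in the presence of a possibly non-Lipschitz coupling, is built into the standing pointwise positivity assumption at points away from $x_0$, so one does not need to re-run a Moser-type computation as in Lemma \ref{lem:positivity}, nor to invoke a strong maximum principle. The companion delicate positivity issue at the Kelvin-singular point $0$ is exactly what Lemma \ref{lem:positivity} addresses and is not reopened here.
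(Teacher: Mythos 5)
Your argument is correct and coincides with the paper's own proof: both exploit that the translation places the Hardy singularity at $x_0\neq 0$, so $u,v$ are continuous and positive on a closed ball $\overline{B_\delta(0)}$ with $x_0\notin B_\delta(0)$, and then the Kelvin identity \eqref{E:kelv} with $\tilde R>1/\delta$ converts the resulting two-sided bounds on that ball into \eqref{eq:bbehaviour}. Your remark that neither Lemma \ref{lem:positivity} nor a maximum principle is needed here also matches the paper, which uses only continuity and pointwise positivity away from $x_0$.
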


\begin{proof}
Since $x_0=(0,x_0') \in \R^n$ is fixed, then there exists $\delta > 0$ such that $x_0 \not \in B_{\delta}(0)$. By our assumptions $u,v \in  C(\R^n \setminus \{x_0\})$, and hence we deduce that $u,v \in C(B_\delta(0))$. We fix $\tilde R > \frac{1}{\delta}$ in such a way that by the definition of Kelvin transformation \eqref{E:kelv}  we easily deduce \eqref{eq:bbehaviour}.

\end{proof}

Now, we are ready to prove that any positive solution to \eqref{eq:doublecriticalsingularequbddshiftKelv} is symmetric in the $x_1$ direction. 

\begin{proof}[Proof of Theorem \ref{thm:main1}]
	Let us consider 
	\begin{equation}\label{eq:mov} \nonumber
	\begin{split}
	\xi_{\lambda}(x):=\hat{u}(x)-\hat{u}_\lambda (x)= \hat{u}(x)- \hat{u}(x_\lambda),\\
	\zeta_{\lambda}(x):=\hat{v}(x)-\hat{v}_\lambda (x)= \hat{v}(x) - \hat{v}(x_\lambda).
	\end{split}
	\end{equation}

	We recall that we are working with the weak formulations \eqref{eq:weakKelv3} and \eqref{eq:weakKelv4}.
	By Lemma \ref{lem:asBehaviour} we deduce that $\vert \hat{u}(x)
	\vert \le C_u \vert x \vert^{2-n}$ and $\vert \hat{v}(x)
	\vert \le C_v \vert x \vert^{2-n} $ and for every $x \in \R^n$ such that
	$\vert x \vert \ge \tilde R$, where $C_u, C_v$ and $\tilde R$ are positive constants
	(depending on $u$ and $v$).  In particular, for every $ \lambda < - \tilde R < 0$, we have
	\begin{equation}\label{E:kelvinp} \nonumber
	\hat{u}, \hat{v}\in L^{2^*}(\Sigma_\lambda)\cap L^{\infty}(\Sigma_\lambda) \cap C^0 (\overline{\Sigma_\lambda}) \,.
	\end{equation}

	In order to complete the proof of our result, we split the proof into three steps.

    \noindent \emph{Step 1: There exists $M>0$} large such that $\hat{u} \leq \hat{u}_\lambda$ and $\hat{v} \leq \hat{v}_\lambda$ in $\Sigma_\lambda\setminus R_{\lambda}(\{0 , x_0\})$, for all $\lambda< -M$.
		We immediately see that $\xi_\lambda^+, \zeta_\lambda^+ \in L^{2^*}(\Sigma_\lambda),$
		since 
		$$0\leq \xi_\lambda^+\leq \hat{u} \in L^{2^*}(\Sigma_\lambda) \ \ \text{ and } \ \ 0\leq \zeta_\lambda^+\leq \hat{v} \in L^{2^*}(\Sigma_\lambda).$$ 
		We point out that, for every $ \varepsilon>0$, we can find a function $\psi_\varepsilon \in C^{0,1}(\R^N, [0,1])$
		such that
		$$\int_{\Sigma_\lambda} |\nabla \psi_\varepsilon|^2 <  \varepsilon$$
		and $\psi_\varepsilon = 0$ in an open neighborhood
		$\mathcal{B_{\varepsilon}}$ of $R_{\lambda}(\{0 , x_0\})$, with $\mathcal{B_{\varepsilon}} \subset
		\Sigma_{\lambda}$.
		
		Fix $ R_0>0$ such that $R_\lambda(\{0,x_0\}) \in
		B_{R_0} $ and, for every $ R > R_0$, let $\eta_R$ be a standard
		cut off function such that $ 0 \le \eta_R \le 1 $ on $ \R^n$,
		$\eta_R=1$ in $B_R$, $\eta_R=0$ outside $B_{2R}$ with
		$|\nabla\eta_R|\leq 2/R,$ and consider
		$$\Phi\,:= \begin{cases}
		\, \xi_\lambda^+\psi_\varepsilon^2\eta_R^2 \, & \text{in}\quad\Sigma_\lambda, \\
		0 &  \text{in}\quad \R^n \setminus  \Sigma_\lambda
		\end{cases} \quad \text{and} \quad
		\Psi\,:= \begin{cases}
		\, \zeta_\lambda^+\psi_\varepsilon^2\eta_R^2 \, & \text{in}\quad\Sigma_\lambda, \\
		0 &  \text{in}\quad \R^n \setminus  \Sigma_\lambda.
		\end{cases}$$
		Now, it is easy to infer that $ \Phi, \Psi \in C^{0,1}_c(\R^n)$ with $\text{supp}(\Phi)$ and  $\text{supp}(\Psi)$ contained in $\overline {\Sigma_{\lambda} \cap B_{2R}} \setminus R_\lambda(\{0,x_0\})$ and
		\begin{equation}\label{E:gradvarphi-intero}
		\nabla \Phi = \psi_\varepsilon^2 \eta_R^2 \nabla \xi^+_\lambda +
		2 \xi_\lambda^+  (\psi_\varepsilon^2 \eta_R \nabla \eta_R + \psi_\varepsilon \eta_R^2  \nabla  \psi_\varepsilon),
		\end{equation}
		\begin{equation}\label{E:gradpsi-intero}
		\nabla \Psi = \psi_\varepsilon^2 \eta_R^2 \nabla \zeta^+_\lambda +
		2 \zeta_\lambda^+ (\psi_\varepsilon^2 \eta_R \nabla \eta_R + \psi_\varepsilon \eta_R^2  \nabla  \psi_\varepsilon).
		\end{equation}
		Therefore, by a standard density argument, we can plug $\Phi$ and $\Psi$ as  test functions in \eqref{eq:weakKelv3} and  \eqref{eq:weakKelv4} respectively, so that, subtracting we get
		\begin{equation}\label{eq:diffu1}
		\begin{split}
		\int_{\Sigma_\lambda}|\nabla \xi^+_\lambda|^2 \psi_\varepsilon^2\eta_R^2\,dx=&
		-2\int_{\Sigma_\lambda}\langle \nabla \xi^+_\lambda, \nabla \psi_\varepsilon \rangle
		\xi_\lambda^+ \psi_\varepsilon\eta_R^2\,dx
		-2\int_{\Sigma_\lambda} \langle \nabla \xi^+_\lambda, \nabla \eta_R \rangle \xi_\lambda^+ \eta_R \psi_\varepsilon^2\,dx\\
		&+\gamma_1\int_{\Sigma_\lambda} \left(\frac{\hat{u}}{f_{x_0}(x)}-\frac{\hat{u}_\lambda}{f_{x_0}(x_\lambda)}\right) \xi_\lambda^+ \psi_\varepsilon^2\eta_R^2\,dx\,\,\\
		&+\int_{\Sigma_\lambda} (\hat{u}^{2^*-1}-\hat{u}_\lambda^{2^*-1}) \xi_\lambda^+ \psi_\varepsilon^2\eta_R^2\,dx\,\,\\
		&+ \nu \alpha\int_{\Sigma_\lambda} (\hat{u}^{\alpha-1}\hat{v}^\beta-\hat{u}^{\alpha-1}_\lambda \hat{v}_\lambda^\beta)\xi_\lambda^+ \psi_\varepsilon^2\eta_R^2\,dx\,\,\\
		=:&\,\mathcal{I}_1+\mathcal{I}_2+\mathcal{I}_3+\mathcal{I}_4+\mathcal{I}_5\,
		\end{split}
		\end{equation}
	and
		\begin{equation}\label{eq:diffv1}
		\begin{split}
		\int_{\Sigma_\lambda}|\nabla \zeta^+_\lambda|^2 \psi_\varepsilon^2\eta_R^2\,dx=&
		-2\int_{\Sigma_\lambda} \langle \nabla \zeta^+_\lambda, \nabla \psi_\varepsilon \rangle
		\zeta_\lambda^+ \psi_\varepsilon\eta_R^2\,dx
		-2\int_{\Sigma_\lambda} \langle \nabla \zeta^+_\lambda, \nabla \eta_R \rangle \zeta_\lambda^+ \eta_R \psi_\varepsilon^2\,dx\\
		&+\gamma_2\int_{\Sigma_\lambda} \left(\frac{\hat{v}}{f_{x_0}(x)}-\frac{\hat{v}_\lambda}{f_{x_0}(x_\lambda)}\right) \zeta_\lambda^+ \psi_\varepsilon^2\eta_R^2\,dx\,\,\\
		&+\int_{\Sigma_\lambda} (\hat{v}^{2^*-1}-\hat{v}_\lambda^{2^*-1}) \zeta_\lambda^+ \psi_\varepsilon^2\eta_R^2\,dx\,\,\\
		&+ \nu \beta\int_{\Sigma_\lambda} (\hat{u}^\alpha\hat{v}^{\beta-1}-\hat{u}^\alpha_\lambda \hat{v}_\lambda^{\beta-1})\zeta_\lambda^+ \psi_\varepsilon^2\eta_R^2\,dx\,\\
		=:&\,\mathcal{E}_1+\mathcal{E}_2+\mathcal{E}_3+\mathcal{E}_4+\mathcal{E}_5\,.
		\end{split}
		\end{equation}
		
		Exploiting also Young's inequality, recalling that
		$0\leq \xi_\lambda^+\leq \hat{u}$ and $0\leq \zeta_\lambda^+\leq \hat{v}$, we get that
		\begin{equation}\label{eq:I1}
		\begin{split}
		|\mathcal{I}_1|&\leq \frac{1}{4} \int_{\Sigma_\lambda}|\nabla \xi^+_\lambda|^2 \psi_\varepsilon^2\eta_R^2\,dx
		+4\int_{\Sigma_\lambda}| \nabla \psi_\varepsilon|^2(\xi_\lambda^+)^2\eta_R^2\,dx\\
		&\leq  \frac{1}{4} \int_{\Sigma_\lambda}|\nabla \xi^+_\lambda|^2 \psi_\varepsilon^2\eta_R^2\,dx
		+ 4 \varepsilon \|\hat{u}\|^2_{L^\infty(\Sigma_\lambda)} .\\
		\end{split}
		\end{equation}
	Similarly, we obtain
		\begin{equation}\label{eq:E1}
		\begin{split}
		|\mathcal{E}_1|&\leq  \frac{1}{4} \int_{\Sigma_\lambda}|\nabla \zeta^+_\lambda|^2 \psi_\varepsilon^2\eta_R^2\,dx
		+ 4 \varepsilon \|\hat{v}\|^2_{L^\infty(\Sigma_\lambda)} .\\
		\end{split}
		\end{equation}
		Furthermore, using H\"older's inequality with exponents $\left(\frac{2^*}{2}, \frac{n}{2}\right)$, one has that
		\begin{equation}\label{eq:I2}
		\begin{split}
		|\mathcal{I}_2|\leq& \frac{1}{4} \int_{\Sigma_\lambda}|\nabla \xi^+_\lambda|^2 \psi_\varepsilon^2\eta_R^2\,dx
		+4\int_{\Sigma_\lambda\cap(B_{2R}\setminus B_{R})}|\nabla \eta_R|^2(\xi_\lambda^+)^2 \psi_\varepsilon^2\,dx\\
		\leq&  \frac{1}{4} \int_{\Sigma_\lambda}|\nabla \xi^+_\lambda|^2 \psi_\varepsilon^2 \eta_R^2\,dx \\
		&+ 4\left(\int_{\Sigma_\lambda\cap(B_{2R}\setminus B_{R})}|\nabla
		\eta_R|^n\,dx\right)^{\frac{2}{n}}
		\left(\int_{\Sigma_\lambda\cap(B_{2R}\setminus B_{R})}\hat{u}^{2^*}\,dx\right)^{\frac{n-2}{n}}\\
		\leq&  \frac{1}{4} \int_{\Sigma_\lambda}|\nabla \xi^+_\lambda|^2 \psi_\varepsilon^2\eta_R^2\,dx\,+\,
		c(n) \left(\int_{\Sigma_\lambda\cap(B_{2R}\setminus B_{R})} \hat{u}^{2^*}\,dx\right)^{\frac{n-2}{n}}.
		\end{split}
		\end{equation}
where $c(n)$ is a positive constant depending only on the dimension $n$ and 
	\begin{center}
$\left(\int_{\Sigma_\lambda\cap(B_{2R}\setminus B_{R})} \hat{u}^{2^*}\,dx\right)^{\frac{n-2}{n}} \rightarrow 0 \qquad $ as $R \rightarrow +\infty$ 
	\end{center}
by the absolute continuity of the Lebesgue integral. Analogously, we deduce that
		\begin{equation}\label{eq:E2}
		\begin{split}
		|\mathcal{E}_2|\leq &  \frac{1}{4} \int_{\Sigma_\lambda}|\nabla \zeta^+_\lambda|^2 \psi_\varepsilon^2\eta_R^2\,dx\,+\,
		c(n) \left(\int_{\Sigma_\lambda\cap(B_{2R}\setminus B_{R})} \hat{v}^{2^*}\,dx\right)^{\frac{n-2}{n}}.
		\end{split}
		\end{equation}

		Let us now estimate $\mathcal{I}_3$ and $\mathcal{E}_3$. Here, we recall the monotonicity property of the function $f_{x_0}$ stated in Lemma \ref{lem:monotonicity}. Moreover, for $\lambda<0$ sufficiently large we have that
		$$\frac{1}{f_{x_0}(x_\lambda)} \geq \frac{1}{f_{x_0}(x)} .$$
		Therefore
		\begin{equation}
		\begin{split}
			|\mathcal{I}_3| &\leq \gamma_1\int_{\Sigma_\lambda} \frac{1}{f_{x_0}(x)} \left|\hat{u}- \hat{u}_\lambda\right| \xi_\lambda^+ \psi_\varepsilon^2\eta_R^2\,dx\,\,\\
			& \leq \gamma_1 C \int_{\Sigma_\lambda} \frac{(\xi_\lambda^+)^2}{|x|^4} \psi_\varepsilon^2\eta_R^2\,dx \leq \gamma_1 C C_\lambda \int_{\Sigma_\lambda} \frac{(\xi_\lambda^+)^2 \psi_\varepsilon^2 \eta_R^2}{|x|^2} \,dx,
		\end{split}
		\end{equation} 
		where 
		\begin{equation} \label{smallconstSUP}
			C_\lambda := \sup_{x \in \Sigma_{\lambda}} \frac{1}{|x|^2} \ \text{ and } \ C_\lambda \rightarrow 0 \ \text{ as } \ \lambda \rightarrow - \infty.
		\end{equation}
	 By Hardy's and Young's inequality we also deduce that
		\begin{equation} \label{eq:I3}
			\begin{split}
				|\mathcal{I}_3| \leq & \gamma_1 C_H^u C_\lambda \int_{\Sigma_\lambda} |\nabla (\xi_\lambda^+ \psi_\varepsilon \eta_R)|^2 \, dx\\				
				= & \gamma_1 C_H^u C_\lambda \int_{\Sigma_\lambda}  |\psi_\varepsilon \eta_R \nabla \xi_\lambda^+ + \xi_\lambda^+ \psi_\varepsilon \nabla \eta_R + \xi_\lambda^+ \eta_R \nabla \psi_\varepsilon|^2\,dx\\
				\leq & 3 \gamma_1 C_H^u C_\lambda \int_{\Sigma_\lambda}  | \nabla \xi_\lambda^+|^2 \psi_\varepsilon^2 \eta_R^2 \, dx + 3 \gamma_1 C_H^u C_\lambda \int_{\Sigma_\lambda \cap(B_{2R}\setminus B_{R})} |\nabla \eta_R|^2 (\xi_\lambda^+)^2 \psi_\varepsilon^2 \, dx \\
				&+ 3 \gamma_1 C_H^u C_\lambda \int_{\Sigma_\lambda}  |\nabla \psi_\varepsilon|^2 (\xi_\lambda^+)^2 \eta_R^2 \,dx\\
				\leq & 3 \gamma_1 C_H^u C_\lambda \int_{\Sigma_\lambda}  | \nabla \xi_\lambda^+|^2 \psi_\varepsilon^2 \eta_R^2 \, dx + 3 \gamma_1 C_H^u C_\lambda c(n) \left(\int_{\Sigma_\lambda\cap(B_{2R}\setminus B_{R})} \hat{u}^{2^*}\,dx\right)^{\frac{n-2}{n}} \\
				& + 3 \gamma_1 C_H^u C_\lambda \varepsilon \|\hat{u}\|^2_{L^\infty(\Sigma_\lambda)} .
			\end{split}
		\end{equation} 
	Employing the argument to estimate $\mathcal{I}_3$, we get
		\begin{equation} \label{eq:E3}
			\begin{split}
				|\mathcal{E}_3| \leq & 3 \gamma_2 C_H^v C_\lambda \int_{\Sigma_\lambda}  | \nabla \zeta_\lambda^+|^2 \psi_\varepsilon^2 \eta_R^2 \, dx + 3 \gamma_2 C_H^v C_\lambda c(n) \left(\int_{\Sigma_\lambda\cap(B_{2R}\setminus B_{R})} \hat{v}^{2^*}\,dx\right)^{\frac{n-2}{n}} \\
				&+ 3 \gamma_2 C_H^v C_\lambda \varepsilon \|\hat{v}\|^2_{L^\infty(\Sigma_\lambda)}.
			\end{split}
		\end{equation} 
		Since $\hat{u}(x), \hat{u}_\lambda(x), \hat{v}(x), \hat{v}_\lambda(x)>0$, by the
		convexity of $ t \mapsto t^{2^*-1},$ for $ t >0$, we obtain
		$$\hat{u}^{2^*-1}(x)-\hat{u}_\lambda^{2^*-1}(x) \le  \frac{n+2}{n-2}
		\hat{u}^{2^*-2}(x) (\hat{u}(x) - \hat{u}_\lambda(x))$$ and $$\hat{v}^{2^*-1}(x)-\hat{v}_\lambda^{2^*-1}(x) \le  \frac{n+2}{n-2}
		\hat{v}^{2^*-2}(x) (\hat{v}(x) - \hat{v}_\lambda(x)),$$ for every $x \in
		\Sigma_{\lambda}$. Thus, 
		$$(\hat{u}^{2^*-1}-\hat{u}_\lambda^{2^*-1})\xi_\lambda^+ \le  \frac{n+2}{n-2}
		\hat{u}^{2^*-2}(\hat{u}-\hat{u}_\lambda) \xi_\lambda^+ \le \frac{n+2}{n-2}
		\hat{u}^{2^*-2}(\xi_\lambda^+)^2$$ and $$(\hat{v}^{2^*-1}-\hat{v}_\lambda^{2^*-1})\zeta_\lambda^+ \le  \frac{n+2}{n-2}
		\hat{v}^{2^*-2}(\hat{v}-\hat{v}_\lambda) \zeta_\lambda^+ \le \frac{n+2}{n-2}
		\hat{v}^{2^*-2}(\zeta_\lambda^+)^2.$$ 
		Therefore, using H\"older's inequality with exponents $\left(\frac{2^*}{2}, \frac{n}{2}\right)$, we have
		\begin{equation}\label{eq:I4}
		\begin{split}
		|\mathcal{I}_4| & \leq \frac{n+2}{n-2} \int_{\Sigma_\lambda} \hat{u}^{2^*-2} (\xi_\lambda^+ \psi_\varepsilon \eta_R)^{2} \, dx \\
		&\leq 		\frac{n+2}{n-2} \left(\int_{\Sigma_\lambda}
		\hat{u}^{2^*} \,dx \right)^\frac{2}{n} \left(\int_{\Sigma_\lambda} (\xi_\lambda^+ \psi_\varepsilon \eta_R)^{2^*}\,dx \right)^\frac{n-2}{n} \\
		&\leq 		\frac{n+2}{n-2} \mathcal{C}_S^u \left(\int_{\Sigma_\lambda}
		\hat{u}^{2^*} \,dx \right)^\frac{2}{n} \int_{\Sigma_\lambda} |\nabla (\xi_\lambda^+ \psi_\varepsilon \eta_R)|^{2}\,dx \\
		&\leq 		3 \frac{n+2}{n-2} \mathcal{C}_S^u \left(\int_{\Sigma_\lambda}
		\hat{u}^{2^*} \,dx \right)^\frac{2}{n} \left[\int_{\Sigma_\lambda} |\nabla \eta_R|^2 (\xi_\lambda^+)^2 \psi_\varepsilon^2 \,dx +\int_{\Sigma_\lambda} |\nabla \psi_\varepsilon |^2 (\xi_\lambda^+)^2 \eta_R^{2}\,dx \right]\\
		& \quad + 3 \frac{n+2}{n-2} \mathcal{C}_S^u  \left(\int_{\Sigma_\lambda}\hat{u}^{2^*} \,dx \right)^\frac{2}{n} \int_{\Sigma_\lambda} |\nabla \xi_\lambda^+|^2 \psi_\varepsilon^2 \eta_R^{2}\,dx,
		\end{split}
		\end{equation}
	where in the last two steps we applied Sobolev and Young's inequalities respectively. Arguing in the same way, we deduce 
		\begin{equation}\label{eq:E4}
		\begin{split}
		|\mathcal{E}_4|& \leq 	3	\frac{n+2}{n-2} \mathcal{C}_S^v \left(\int_{\Sigma_\lambda}
		\hat{v}^{2^*} \,dx \right)^\frac{2}{n} \left[\int_{\Sigma_\lambda} |\nabla \eta_R|^2 (\zeta_\lambda^+)^2 \psi_\varepsilon^2 \,dx +\int_{\Sigma_\lambda} |\nabla \psi_\varepsilon |^2 (\zeta_\lambda^+)^2 \eta_R^{2}\,dx \right]\\
		& \quad + 3 \frac{n+2}{n-2} \mathcal{C}_S^v  \left(\int_{\Sigma_\lambda}\hat{v}^{2^*} \,dx \right)^\frac{2}{n} \int_{\Sigma_\lambda} |\nabla \zeta_\lambda^+|^2 \psi_\varepsilon^2 \eta_R^{2}\,dx.
		\end{split}
		\end{equation}
	The evaluation of $\mathcal{I}_5$ and $\mathcal{E}_5$ is a delicate issue within this argument. Note that, in particular, we may have that either $\alpha<2$, $\beta<2$ or both. In all this cases we have to face a nonlinear term which is not Lipschitz continuous at zero. We shall exploit the following
	\begin{rem}\label{fsdfd}
Let us consider $h(t)=t^{s}$ with $s>0$  and $b\in[\theta_1\,,\, \theta_2]$ with $\theta_i>0$.
Then, for $0<a\leq b$
\[
\frac{h(b)-h(a)}{b-a}\leq C(s,\theta_1,\theta_2)\,.
\]
To prove this fact it is sufficient to exploit the Mean Value Theorem for $a\in [b/2,b]$ and the
Weierstrass Theorem  for $a\in (0,b/2]$\,.
	\end{rem}
	Having in mind this argument, one obtains that
	
		\begin{equation}\label{eq:I_50}
		\begin{split}
			\mathcal{I}_5 &= \nu \alpha\int_{\Sigma_\lambda} (\hat{u}^{\alpha-1}\hat{v}^\beta-\hat{u}^{\alpha-1}_\lambda \hat{v}^\beta)\xi_\lambda^+ \psi_\varepsilon^2\eta_R^2\,dx + \nu \alpha\int_{\Sigma_\lambda} (\hat{u}^{\alpha-1}_\lambda \hat{v}^\beta-\hat{u}^{\alpha-1}_\lambda \hat{v}_\lambda^\beta)\xi_\lambda^+ \psi_\varepsilon^2\eta_R^2\,dx\\
			& \leq \nu \alpha\int_{\Sigma_\lambda} (\hat{u}^{\alpha-1} - \hat{u}^{\alpha-1}_\lambda) \hat{v}^\beta \xi_\lambda^+ \psi_\varepsilon^2\eta_R^2\,dx + \nu \alpha\int_{\Sigma_\lambda  \cap \{\hat v > \hat v_\lambda\}} \hat{u}^{\alpha-1} (\hat{v}^\beta- \hat{v}_\lambda^\beta)\xi_\lambda^+ \psi_\varepsilon^2\eta_R^2\,dx,
		\end{split}
	\end{equation}
	where in the last inequality we used the cooperativity of our system and the fact that we are working in $\Sigma_\lambda \cap \text{supp}[(u-u_\lambda)^+]$.
	 Making use of Lemma \ref{lem:asBehaviour}  we deduce that
\begin{equation}\label{eq:I_5}
	\begin{split}
		\mathcal{I}_5 &   \leq \nu \alpha\int_{\Sigma_\lambda} |\hat{u}^{\alpha-1} - \hat{u}^{\alpha-1}_\lambda| \hat{v}^\beta \xi_\lambda^+ \psi_\varepsilon^2\eta_R^2\,dx + \nu \alpha \int_{\Sigma_\lambda  \cap \{\hat v > \hat v_\lambda\}} \hat{u}^{\alpha-1} |\hat{v}^\beta- \hat{v}_\lambda^\beta|\xi_\lambda^+ \psi_\varepsilon^2\eta_R^2\,dx\\	
		& \leq  \nu \alpha \mathcal{C} \int_{\Sigma_\lambda} |x|^{-\beta(n-2)}|x|^{-(\alpha-1)(n-2)}|(|x|^{n-2}\hat{u})^{\alpha-1} - (|x|^{n-2}\hat{u}_\lambda)^{\alpha-1}| \xi_\lambda^+ \psi_\varepsilon^2\eta_R^2\,dx\\
		& \quad + \nu \alpha \mathcal{C} \int_{\Sigma_\lambda \cap \{\hat v > \hat v_\lambda\}} |x|^{-(\alpha-1)(n-2)}|x|^{-(\beta-1)(n-2)}
		\xi_\lambda^+ \zeta_\lambda^+ \psi_\varepsilon^2\eta_R^2\,dx \\
		& \leq  \nu \alpha \mathcal{C} \int_{\Sigma_\lambda} |x|^{-(2^*-1)(n-2)} \frac{|(|x|^{n-2}\hat{u})^{\alpha-1} - (|x|^{n-2}\hat{u}_\lambda)^{\alpha-1}|}{|x|^{n-2} \hat u - |x|^{n-2} \hat u_\lambda} |x|^{n-2}(\xi_\lambda^+)^2 \psi_\varepsilon^2\eta_R^2\,dx\\
		& \quad + \nu \alpha \mathcal{C} \int_{\Sigma_\lambda} |x|^{-(2^*-2)(n-2)}	\xi_\lambda^+ \zeta_\lambda^+ \psi_\varepsilon^2\eta_R^2\,dx. \\
	\end{split}
\end{equation}

By Remark \ref{fsdfd}, we get
$$\frac{(|x|^{n-2}\hat{u})^{\alpha-1} - (|x|^{n-2}\hat{u}_\lambda)^{\alpha-1}}{|x|^{n-2} \hat u - |x|^{n-2} \hat u_\lambda} \leq \overline{C},$$
whereas $c_u \leq |x|^{n-2}\hat u \leq C_u$ and $c_v \leq |x|^{n-2}\hat v \leq C_v$ by Lemma~\ref{lem:asBehaviour}. Then \eqref{eq:I_5} rewrites as 
\begin{equation}\label{eq:I_51}
	\begin{split}		
		|\mathcal{I}_5| &  \leq \nu \alpha \mathcal{C} \int_{\Sigma_\lambda} \hat u^{2^*-2} (\xi_\lambda^+)^2 \psi_\varepsilon^2\eta_R^2\,dx\, + \nu \alpha \mathcal{C} \int_{\Sigma_\lambda} \hat u^{\frac{2^*-2}{2}}	\xi_\lambda^+  \psi_\varepsilon \eta_R \hat v^{\frac{2^*-2}{2}} \zeta_\lambda^+ \psi_\varepsilon \eta_R \,dx\\
		& \leq \mathcal{C}_1 \int_{\Sigma_\lambda} \hat u^{2^*-2}(\xi_\lambda^+)^2 \psi_\varepsilon^2\eta_R^2\,dx\, + \mathcal{C}_2 \int_{\Sigma_\lambda} \hat v^{2^*-2}	(\zeta_\lambda^+)^2 \psi_\varepsilon^2\eta_R^2\,dx,
	\end{split}
\end{equation}
where the constant $\mathcal{C}$ has been relabelled and we have applied Young's inequality.

Similarly, we can obtain an analogous estimate for $\mathcal{E}_5$, i.e.
\begin{equation}\label{eq:E_5}
	\begin{split}
		|\mathcal{E}_5|  \leq \tilde{\mathcal{C}}_1 \int_{\Sigma_\lambda} \hat u^{2^*-2}(\xi_\lambda^+)^2 \psi_\varepsilon^2\eta_R^2\,dx\, + \tilde{\mathcal{C}}_2 \int_{\Sigma_\lambda} \hat v^{2^*-2}	(\zeta_\lambda^+)^2 \psi_\varepsilon^2\eta_R^2\,dx.
    \end{split}
\end{equation}
As we argued in \eqref{eq:I4} for $\mathcal{I}_4$ in, we get that
	\begin{equation}\label{eq:I_5bis}
	\begin{split}
		|\mathcal{I}_5| & \leq 		3 \frac{n+2}{n-2} \mathcal{C}_S^u \mathcal{C}_1 \left(\int_{\Sigma_\lambda}
		\hat{u}^{2^*} \,dx \right)^\frac{2}{n} \left[\int_{\Sigma_\lambda} |\nabla \eta_R|^2 (\xi_\lambda^+)^2 \psi_\varepsilon^2 \,dx +\int_{\Sigma_\lambda} |\nabla \psi_\varepsilon |^2 (\xi_\lambda^+)^2 \eta_R^{2}\,dx \right]  \\
		& \quad  	+	3\frac{n+2}{n-2} \mathcal{C}_S^v \mathcal{C}_2\left(\int_{\Sigma_\lambda}
		\hat{v}^{2^*} \,dx \right)^\frac{2}{n} \left[\int_{\Sigma_\lambda} |\nabla \eta_R|^2 (\zeta_\lambda^+)^2 \psi_\varepsilon^2 \,dx +\int_{\Sigma_\lambda} |\nabla \psi_\varepsilon |^2 (\zeta_\lambda^+)^2 \eta_R^{2}\,dx \right]  \\
		& \quad + 3 \frac{n+2}{n-2} \mathcal{C}_S^u \mathcal{C}_1 \left(\int_{\Sigma_\lambda}\hat{u}^{2^*} \,dx \right)^\frac{2}{n} \int_{\Sigma_\lambda} |\nabla \xi_\lambda^+|^2 \psi_\varepsilon^2 \eta_R^{2}\,dx\\
		& \quad + 3 \frac{n+2}{n-2} \mathcal{C}_S^v \mathcal{C}_2 \left(\int_{\Sigma_\lambda}\hat{v}^{2^*} \,dx \right)^\frac{2}{n} \int_{\Sigma_\lambda} |\nabla \zeta_\lambda^+|^2 \psi_\varepsilon^2 \eta_R^{2}\,dx,
	\end{split}
	\end{equation}
Analogously, we deduce that 
	\begin{equation}\label{eq:E_5bis}
	\begin{split}
		|\mathcal{E}_5|  &	\leq 		3 \frac{n+2}{n-2} \mathcal{C}_S^u \tilde{\mathcal{C}}_1 \left(\int_{\Sigma_\lambda}
		\hat{u}^{2^*} \,dx \right)^\frac{2}{n} \left[\int_{\Sigma_\lambda} |\nabla \eta_R|^2 (\xi_\lambda^+)^2 \psi_\varepsilon^2 \,dx +\int_{\Sigma_\lambda} |\nabla \psi_\varepsilon |^2 (\xi_\lambda^+)^2 \eta_R^{2}\,dx \right]\\
		& \quad  	+	3\frac{n+2}{n-2} \mathcal{C}_S^v \tilde{\mathcal{C}}_2\left(\int_{\Sigma_\lambda}
		\hat{v}^{2^*} \,dx \right)^\frac{2}{n} \left[\int_{\Sigma_\lambda} |\nabla \eta_R|^2 (\zeta_\lambda^+)^2 \psi_\varepsilon^2 \,dx +\int_{\Sigma_\lambda} |\nabla \psi_\varepsilon |^2 (\zeta_\lambda^+)^2 \eta_R^{2}\,dx \right]\\
		& \quad + 3 \frac{n+2}{n-2} \mathcal{C}_S^u \tilde{\mathcal{C}}_1 \left(\int_{\Sigma_\lambda}\hat{u}^{2^*} \,dx \right)^\frac{2}{n} \int_{\Sigma_\lambda} |\nabla \xi_\lambda^+|^2 \psi_\varepsilon^2 \eta_R^{2}\,dx\\
		& \quad + 3 \frac{n+2}{n-2} \mathcal{C}_S^v \tilde{\mathcal{C}}_2 \left(\int_{\Sigma_\lambda}\hat{v}^{2^*} \,dx \right)^\frac{2}{n} \int_{\Sigma_\lambda} |\nabla \zeta_\lambda^+|^2 \psi_\varepsilon^2 \eta_R^{2}\,dx.
	\end{split}
\end{equation}

Collecting all the previous estimates for $\mathcal{I}_k, \mathcal{E}_k$ with $k=1,2,3,4,5$, we deduce that 		
\begin{equation}\label{eq:diffu2}
	\begin{split}
		\int_{\Sigma_\lambda}|\nabla \xi^+_\lambda|^2 \psi_\varepsilon^2\eta_R^2\,dx \leq&
		C_{1,u}\int_{\Sigma_\lambda}|\nabla \xi^+_\lambda|^2 \psi_\varepsilon^2\eta_R^2\,dx + C_{1,v} \int_{\Sigma_\lambda} |\nabla \zeta_\lambda^+|^2 \psi_\varepsilon^2 \eta_R^{2}\,dx,\\
		& + \varepsilon C_{2,u} \|\hat{u}\|^2_{L^\infty(\Sigma_\lambda)} + 	\varepsilon C_{2,v} \|\hat{v}\|^2_{L^\infty(\Sigma_\lambda)}\\
		& + C_{3,u} \left(\int_{\Sigma_\lambda\cap(B_{2R}\setminus B_{R})} \hat{u}^{2^*}\,dx\right)^{\frac{n-2}{n}} +	C_{3,v} \left(\int_{\Sigma_\lambda\cap(B_{2R}\setminus B_{R})} \hat{v}^{2^*}\,dx\right)^{\frac{n-2}{n}},
	\end{split}
\end{equation}
where
\begin{itemize}
	\item[] $\displaystyle C_{1,u} := \frac{1}{2} +3\gamma_1 C_H^u C_\lambda + 3 \frac{n+2}{n-2} \mathcal{C}_S^u (1+\mathcal{C}_1)\|\hat u\|_{L^{2^*}(\Sigma_\lambda)}^{\frac{4}{n-2}},$
	
	\item[] $C_{1,v} := \displaystyle 3 \frac{n+2}{n-2} \mathcal{C}_S^v \mathcal{C}_2\|\hat v\|_{L^{2^*} (\Sigma_\lambda)}^{\frac{4}{n-2}},$
		
	\item[] $\displaystyle C_{2,u} := 4 + 3 \gamma_1 C_H^u C_\lambda + 3 \frac{n+2}{n-2} \mathcal{C}_S^u (1+\mathcal{C}_1)\|\hat u\|_{L^{2^*}(\Sigma_\lambda)}^{\frac{4}{n-2}},$
	
	\item[] $\displaystyle C_{2,v} := 3 \frac{n+2}{n-2}  \mathcal{C}_S^v \mathcal{C}_2\|\hat v\|_{L^{2^*}(\Sigma_\lambda)}^{\frac{4}{n-2}},$
	
	\item[] $\displaystyle C_{3,u} := c(n) + 3 \gamma_1 C_H^u C_\lambda c(n)  + 3 \frac{n+2}{n-2} \mathcal{C}_S^u (1+\mathcal{C}_1)\|\hat u\|_{L^{2^*}(\Sigma_\lambda)}^{\frac{4}{n-2}},$
	
	\item[] $\displaystyle C_{3,v} := 3 \frac{n+2}{n-2}  \mathcal{C}_S^v \mathcal{C}_2\|\hat v\|_{L^{2^*}(\Sigma_\lambda)}^{\frac{4}{n-2}}.$
\end{itemize}
We obtain an analogous estimate for $\zeta_\lambda^+$, i.e.
\begin{equation}\label{eq:diffv2}
	\begin{split}
		\int_{\Sigma_\lambda}|\nabla \zeta^+_\lambda|^2 \psi_\varepsilon^2\eta_R^2\,dx \leq & \tilde C_{1,u}\int_{\Sigma_\lambda}|\nabla \xi^+_\lambda|^2 \psi_\varepsilon^2\eta_R^2\,dx + \tilde C_{1,v} \int_{\Sigma_\lambda} |\nabla \zeta_\lambda^+|^2 \psi_\varepsilon^2 \eta_R^{2}\,dx,\\
		& + \varepsilon \tilde C_{2,u} \|\hat{u}\|^2_{L^\infty(\Sigma_\lambda)} + 	\varepsilon \tilde C_{2,v} \|\hat{v}\|^2_{L^\infty(\Sigma_\lambda)}\\
		& + \tilde C_{3,u} \left(\int_{\Sigma_\lambda\cap(B_{2R}\setminus B_{R})} \hat{u}^{2^*}\,dx\right)^{\frac{n-2}{n}} + \tilde 	C_{3,v} \left(\int_{\Sigma_\lambda\cap(B_{2R}\setminus B_{R})} \hat{v}^{2^*}\,dx\right)^{\frac{n-2}{n}},
	\end{split}
\end{equation}
where
\begin{itemize}
	\item[] $\displaystyle \tilde C_{1,u} := 3 \frac{n+2}{n-2} \mathcal{C}_S^u \tilde{\mathcal{C}}_1\|\hat u\|_{L^{2^*} (\Sigma_\lambda)}^{\frac{4}{n-2}},$

	\item[] $\displaystyle \tilde C_{1,v} :=  \frac{1}{2} +3\gamma_1 C_H^v C_\lambda + 3 \frac{n+2}{n-2} \mathcal{C}_S^v (1+\tilde{\mathcal{C}}_2)\|\hat v\|_{L^{2^*}(\Sigma_\lambda)}^{\frac{4}{n-2}},$

	\item[] $\displaystyle \tilde C_{2,u} := 3 \frac{n+2}{n-2}  \mathcal{C}_S^u \tilde{\mathcal{C}}_1\|\hat u\|_{L^{2^*}(\Sigma_\lambda)}^{\frac{4}{n-2}},$

	\item[] $\displaystyle \tilde C_{2,v} := 4 + 3 \gamma_1 C_H^v C_\lambda + 3 \frac{n+2}{n-2} \mathcal{C}_S^v (1+\tilde{\mathcal{C}}_2)\|\hat v\|_{L^{2^*}(\Sigma_\lambda)}^{\frac{4}{n-2}},$

	\item[] $\displaystyle \tilde C_{3,u} :=  3 \frac{n+2}{n-2} \mathcal{C}_S^u \tilde{\mathcal{C}}_1\|\hat u\|_{L^{2^*}(\Sigma_\lambda)}^{\frac{4}{n-2}},$

	\item[] $\displaystyle \tilde C_{3,v} := c(n) + 3 \gamma_1 C_H^u C_\lambda c(n)  + 3 \frac{n+2}{n-2}  \mathcal{C}_S^v \tilde{\mathcal{C}}_2\|\hat v\|_{L^{2^*}(\Sigma_\lambda)}^{\frac{4}{n-2}}.$
\end{itemize}
Summing both the contributions \eqref{eq:diffu2} and \eqref{eq:diffv2} we get
\begin{equation}\label{eq:summfinal}
	\begin{split}
		\int_{\Sigma_\lambda}|\nabla \xi^+_\lambda|^2 \psi_\varepsilon^2\eta_R^2\,dx & +  \int_{\Sigma_\lambda}|\nabla \zeta^+_\lambda|^2 \psi_\varepsilon^2\eta_R^2\,dx \\		
		\leq&	(C_{1,u} + \tilde C_{1,u})\int_{\Sigma_\lambda}|\nabla \xi^+_\lambda|^2 \psi_\varepsilon^2\eta_R^2\,dx + (C_{1,v} + \tilde C_{1,v}) \int_{\Sigma_\lambda} |\nabla \zeta_\lambda^+|^2 \psi_\varepsilon^2 \eta_R^{2}\,dx\\
		& + \varepsilon (C_{2,u} + \tilde C_{2,u}) \|\hat{u}\|^2_{L^\infty(\Sigma_\lambda)} + 	\varepsilon (C_{2,v} + C_{2,v}) \|\hat{v}\|^2_{L^\infty(\Sigma_\lambda)}\\
		& + (C_{3,u} + \tilde C_{3,u}) \left(\int_{\Sigma_\lambda\cap(B_{2R}\setminus B_{R})} \hat{u}^{2^*}\,dx\right)^{\frac{n-2}{n}} \\
		& +	(C_{3,v} + \tilde C_{3,v}) \left(\int_{\Sigma_\lambda\cap(B_{2R}\setminus B_{R})} \hat{v}^{2^*}\,dx\right)^{\frac{n-2}{n}}.
	\end{split}
\end{equation}

Now, recalling \eqref{smallconstSUP} and using the absolute continuity of the Lebesgue integral, we fix $M> 0$ sufficiently large such that 
$$ C_{1,u} + \tilde C_{1,u} < 1 \quad \text{and} \quad C_{1,v} + \tilde C_{1,v} < 1$$
for each $\lambda < - M$. Finally, passing to the limit for $R \rightarrow + \infty$ and $\varepsilon \rightarrow 0^+$, we obtain that
\begin{equation}
	\int_{\Sigma_\lambda}|\nabla \xi^+_\lambda|^2  \, dx + \int_{\Sigma_\lambda}|\nabla \zeta^+_\lambda|^2  \, dx \leq 0,
\end{equation}
for each $\lambda < - M$, which implies
\begin{equation}
	\int_{\Sigma_\lambda}|\nabla \xi^+_\lambda|^2  \, dx \leq 0 \quad \text{and} \quad \int_{\Sigma_\lambda}|\nabla \zeta^+_\lambda|^2  \, dx \leq 0,
\end{equation}
for each $\lambda < - M$. The last inequalities immediately imply the thesis of the first step.

\

	\noindent To proceed further we
	define
	\begin{equation}\nonumber
	\Lambda=\{\lambda<0 : \hat{u}\leq \hat{u}_{t}\,\,\,\text{and}\,\,\, \hat{v}\leq \hat{v}_{t}\,\,\,\text{in} \,\,\, \Sigma_t\setminus R_t(\{0, x_0\})\,\,\,\text{for all $t\in(-\infty,\lambda]$}\}
	\end{equation}
	and
	\begin{equation}\nonumber
	\bar \lambda=\sup\,\Lambda.
	\end{equation}

	\noindent \emph{Step 2: $\bar \lambda=0$.} 
	We argue by
	contradiction and suppose that $\bar \lambda<0$. By continuity we know
	that $\hat{u}\leq \hat{u}_{\bar \lambda}$ and $\hat{v}\leq \hat{v}_{\bar \lambda}$ in $\Sigma_{\bar \lambda}\setminus R_{\bar \lambda}(\{0, x_0\})$. By the strong comparison principle we deduce that there hold two possibilities:
	\begin{itemize}
		\item[(i)] $\hat u < \hat u_{\bar \lambda}$ and $\hat v < \hat v_{\bar \lambda}$ in $\Sigma_{\bar \lambda} \setminus R_{\bar \lambda}(\{0,x_0\})$;
			
		\item[(ii)]  $\hat u \equiv \hat u_{\bar \lambda}$ and $\hat v \equiv \hat v_{\bar \lambda}$ in $\Sigma_{\bar \lambda} \setminus R_{\bar \lambda}(\{0,x_0\})$.
	\end{itemize}
	The case (ii) is not possible ,because, since $\hat u $ has a sign at $x_0$, it holds that $\hat u < \hat u_{\bar \lambda}$ and $\hat v < \hat v_{\bar \lambda}$ in $\Sigma_{\bar \lambda} \setminus R_{\bar \lambda}(\{0, x_0\})$.
	
	Now, we push further the hyperplane $T_{\bar \lambda}$ and consider the hyperplane 
	$T_{\bar \lambda+\varepsilon}$ for some $\varepsilon>0$. 
We claim that,  for $\varepsilon> 0$ small, we have that
	\begin{equation}\label{eq:compactsupptest}
		\text{supp}[(\hat u - \hat u_{\bar \lambda + \varepsilon})^+], \ \text{supp}[(\hat v - \hat v_{\bar \lambda + \varepsilon})^+] \subset (\Sigma_{\bar \lambda + \varepsilon} \setminus B_{\tilde R}(0)) \cup R_{\bar \lambda +\varepsilon}(B_\delta(0)  \cup  B_\delta(x_0)),
	\end{equation}
	where $\tilde R>0$ is given by Lemma \ref{lem:asBehaviour} and $\delta>0$ is small. For the reader convenience we set
	$$\Xi_{\bar \lambda + \varepsilon} := (\Sigma_{\bar \lambda + \varepsilon} \setminus B_{\tilde R}(0)) \cup R_{\bar \lambda +\varepsilon}(B_\delta(0)  \cup  B_\delta(x_0)).$$
	
	Arguing by contradiction, let us assume that \eqref{eq:compactsupptest} does not hold. Hence, if we define $$\mathcal{G}_\varepsilon:= B_{\tilde R}(0) \cap (\Sigma_{\bar \lambda + \varepsilon} \setminus \{R_{\bar \lambda +\varepsilon}(B_\delta(0)  \cup  B_\delta(x_0))\}),$$
	we deduce that 
\begin{equation*}
	\begin{split}
		\\ \;  \exists \  P_m \in \mathcal{G}_{ \tau_m},  \text{ with }  \{\tau_m\}_{m \in \N} \ (\tau_m \rightarrow 0) \text{ such that } \\
		\hat{u} (P_m) > \hat u_{\bar \lambda + \tau_m} (P_m) \;  \text{  or } \; \hat{v} (P_m) > \hat v_{\bar \lambda + \tau_m}(P_m).	
	\end{split}
\end{equation*}
Without loss of generality, we assume that $\hat{u} (P_m) > \hat u_{\bar \lambda + \tau_m} (P_m)$. Up to subsequence, as $m \rightarrow + \infty$ we have that
$$P_m \longrightarrow \bar P \in \overline{	\mathcal{G}_{0}}.$$
Finally we have that or $\bar P \in T_{\bar \lambda}$ or $\bar P \notin  T_{\bar \lambda}$.

\begin{itemize}
	\item If $\bar P \notin  T_{\bar \lambda}$, then we have a contradiction with the fact that $\hat u(\bar P)< \hat u_{\bar \lambda}(\bar P) $ and $\bar P \not \in R_{\bar \lambda }(B_\delta(0)  \cup  B_\delta(x_0))$.
	
	\item If $\bar P \in T_{\bar \lambda}$, then we have $\displaystyle \frac{\partial \hat u}{\partial x_1} ( \bar P) \leq 0$. But, by the Hopf boundary lemma, we know that $\displaystyle \frac{\partial \hat u}{\partial x_1} ( \bar P) > 0$ providing a contradiction. Note that here it is needed to run over the argument in 
	 \cite{GNN} that works thanks to the cooperativity condition.
\end{itemize}
Now, we know that \eqref{eq:compactsupptest} holds true  and we argue as in \textit{Step 1} choosing
	$$\Phi\,:= \begin{cases}
	\, \xi_{\bar \lambda + \varepsilon}^+\psi_\varepsilon^2\eta_R^2 \, & \text{in}\quad\Sigma_{\bar \lambda + \varepsilon}, \\
	0 &  \text{in}\quad \R^n \setminus  \Sigma_{\bar \lambda + \varepsilon}
\end{cases} \quad \text{and} \quad
\Psi\,:= \begin{cases}
	\, \zeta_{\bar \lambda + \varepsilon}^+\psi_\varepsilon^2\eta_R^2 \, & \text{in}\quad\Sigma_{\bar \lambda + \varepsilon}, \\
	0 &  \text{in}\quad \R^n \setminus  \Sigma_{\bar \lambda + \varepsilon},
\end{cases}$$
as test functions respectively in \eqref{eq:weakKelv3} and in \eqref{eq:weakKelv4} so that, subtracting we get
		\begin{equation}\label{eq:diffu1bis}
	\begin{split}
		\int_{\Xi_{\bar \lambda + \varepsilon} }|\nabla \xi^+_{\bar \lambda + \varepsilon}|^2 \psi_\varepsilon^2\eta_R^2\,dx=&
		-2\int_{\Xi_{\bar \lambda + \varepsilon}}\langle \nabla \xi^+_{\bar \lambda + \varepsilon}, \nabla \psi_\varepsilon \rangle
		\xi_{\bar \lambda + \varepsilon}^+ \psi_\varepsilon\eta_R^2\,dx\\
		&-2\int_{\Xi_{\bar \lambda + \varepsilon}} \langle \nabla \xi^+_{\bar \lambda + \varepsilon}, \nabla \eta_R \rangle \xi_{\bar \lambda + \varepsilon}^+ \eta_R \psi_\varepsilon^2\,dx\\
		&+\gamma_1\int_{\Xi_{\bar \lambda + \varepsilon}} \left(\frac{\hat{u}}{f_{x_0}(x)}-\frac{\hat{u}_{\bar \lambda + \varepsilon}}{f_{x_0}(x_{\bar \lambda + \varepsilon})}\right) \xi_{\bar \lambda + \varepsilon}^+ \psi_\varepsilon^2\eta_R^2\,dx\,\,\\
		&+\int_{\Xi_{\bar \lambda + \varepsilon}} (\hat{u}^{2^*-1}-\hat{u}_{\bar \lambda + \varepsilon}^{2^*-1}) \xi_{\bar \lambda + \varepsilon}^+ \psi_\varepsilon^2\eta_R^2\,dx\,\,\\
		&+ \nu \alpha\int_{\Xi_{\bar \lambda + \varepsilon}} (\hat{u}^{\alpha-1}\hat{v}^\beta-\hat{u}^{\alpha-1}_{\bar \lambda + \varepsilon} \hat{v}_{\bar \lambda + \varepsilon}^\beta) \xi_{\bar \lambda + \varepsilon}^+ \psi_\varepsilon^2\eta_R^2\,dx\,.
	\end{split}
\end{equation}
\begin{equation}\label{eq:diffv1bis}
	\begin{split}
	\int_{\Xi_{\bar \lambda + \varepsilon} }|\nabla \zeta^+_{\bar \lambda + \varepsilon}|^2 \psi_\varepsilon^2\eta_R^2\,dx=&
	-2\int_{\Xi_{\bar \lambda + \varepsilon}}\langle \nabla \zeta^+_{\bar \lambda + \varepsilon}, \nabla \psi_\varepsilon \rangle
	\zeta_{\bar \lambda + \varepsilon}^+ \psi_\varepsilon\eta_R^2\,dx\\
	&-2\int_{\Xi_{\bar \lambda + \varepsilon}} \langle \nabla \zeta^+_{\bar \lambda + \varepsilon}, \nabla \eta_R \rangle \zeta_{\bar \lambda + \varepsilon}^+ \eta_R \psi_\varepsilon^2\,dx\\
	&+\gamma_1\int_{\Xi_{\bar \lambda + \varepsilon}} \left(\frac{\hat{v}}{f_{x_0}(x)}-\frac{\hat{v}_{\bar \lambda + \varepsilon}}{f_{x_0}(x_{\bar \lambda + \varepsilon})}\right) \zeta_{\bar \lambda + \varepsilon}^+ \psi_\varepsilon^2\eta_R^2\,dx\,\,\\
	&+\int_{\Xi_{\bar \lambda + \varepsilon}} (\hat{v}^{2^*-1}-\hat{v}_{\bar \lambda + \varepsilon}^{2^*-1}) \zeta_{\bar \lambda + \varepsilon}^+ \psi_\varepsilon^2\eta_R^2\,dx\,\,\\
	&+ \nu \beta \int_{\Xi_{\bar \lambda + \varepsilon}} (\hat{u}^{\alpha}\hat{v}^{\beta-1}-\hat{u}^{\alpha}_{\bar \lambda + \varepsilon} \hat{v}_{\bar \lambda + \varepsilon}^{\beta-1})\zeta_{\bar \lambda + \varepsilon}^+ \psi_\varepsilon^2\eta_R^2\,dx\,.
\end{split}
\end{equation}

Now, we can define the following sets
$$ A:= R_{\bar \lambda + \varepsilon} (B_\delta(0)  \cup  B_\delta(x_0))  \quad \text{and} \quad B:= \Sigma_{\bar \lambda + \varepsilon} \setminus B_{\tilde R}(0),$$
so that $\text{supp}[(\hat u - \hat u_{\bar \lambda + \varepsilon})^+], \ \text{supp}[(\hat v - \hat v_{\bar \lambda + \varepsilon})^+] \subset A \cup B$. Arguing as in \textit{Step 1}, from \eqref{eq:I1} to \eqref{eq:diffv2} it is possible to show that
\begin{equation} \label{B}
	\int_{A \cup B}|\nabla \xi^+_{\bar \lambda + \varepsilon}|^2  \, dx \leq 0 \quad \text{and} \quad \int_{A \cup B}|\nabla \zeta^+_{\bar \lambda + \varepsilon}|^2  \, dx \leq 0,
\end{equation}
where, in  the set $A$, we  argue as in the case of bounded domains, see \cite{BN}. Note that 
 Lemma \ref{lem:positivity}  ensures that we are working far from zero since $\hat u$ and $\hat v$
 are bounded away from zero near the origin and near the translation point $x_0$. This allows us to apply the weak comparison principle in small domains (see e.g.~\cite{esposito, EFS, sciunzi}), and deduce that
\begin{equation} \label{A}
	\int_{A}|\nabla \xi^+_{\bar \lambda + \varepsilon}|^2  \, dx \leq 0 \quad \text{and} \quad \int_{A}|\nabla \zeta^+_{\bar \lambda + \varepsilon}|^2  \, dx \leq 0.
\end{equation}
Finally, collecting \eqref{eq:compactsupptest}, \eqref{B} and \eqref{A}, we deduce $\text{supp}[(\hat u- \hat u_{\bar \lambda + \varepsilon})^+] = \emptyset$, $\text{supp}[(\hat v- \hat v_{\bar \lambda + \varepsilon})^+] = \emptyset$ and hence
$$\hat u< \hat u_{\bar \lambda + \varepsilon} \ \text{ and } \ \hat v < \hat v_{\bar \lambda + \varepsilon} \text{ in } \Sigma_{\bar \lambda + \varepsilon},$$
which is in contradiction with the definition of $\bar \lambda$. So, $\bar{\lambda}$ must be 0.

\noindent \emph{Step 3: Conclusion.} The symmetry of the Kelvin
transform $(\hat u, \hat v)$ in the $x_1$-direction follows now performing the moving plane method in the opposite direction. By the definition of $(\hat u, \hat v)$ given in \eqref{E:kelv}, the symmetry of $\hat{u}$ and $\hat{v}$  w.r.t.~the hyperplane $\{ x_1=0 \}$ implies the same symmetry of the solution $(u,v)$. The symmetry of any solution of \eqref{eq:doublecriticalsingularequbdddiffgamma} follows now recalling that the translation point $x_0$ is arbitrary. We can repeat the same argument with respect to any fixed direction $\nu \in \R^n$. The last one implies that $(u,v)$ must be radially symmetric about the origin.
	
\end{proof}

\section{Asymptotic behavior of solutions} \label{sec3}

The main contribution of the paper is to show that solutions are unique modulo rescaling, in the same spirit of Teraccini's paper, \cite{terracini}. In order to achieve our goal, we need to study the asymptotic behavior of the solutions to \eqref{eq:doublecriticalsingularequbdd} that we recall for the reader convenience
\begin{equation}\tag{$\mathcal{S}^*$}\label{eq:doublecriticalsingularequbddbis}
	\begin{cases}
		\displaystyle -\Delta u\,=\gamma \frac{u}{|x|^2} + u^{2^*-1}+ \nu \alpha u^{\alpha-1} v^\beta & \text{in}\quad\R^n \vspace{0.2cm}\\
		\displaystyle -\Delta v\,=\gamma \frac{v}{|x|^2} + v^{2^*-1}+ \nu \beta u^\alpha v^{\beta-1} & \text{in}\quad\R^n\\
		u,v > 0 &  \text{in}\quad\R^n \setminus \{0\}.
	\end{cases}
\end{equation}
For any $\tau>0$,  let us define
\begin{equation}\label{eq:uvtau}
	(u_\tau(x), v_\tau(x)) := \left(|x|^\tau u(x), |x|^\tau v(x) \right).
\end{equation}

In particular, we set 
\begin{equation}\label{eq:tau}
\tau_1=\frac{n-2}{2} - \sqrt{\left( \frac{n-2}{2}\right)^2-\gamma} \quad \text{or} \quad  \tau_2=\frac{n-2}{2} + \sqrt{\left( \frac{n-2}{2}\right)^2-\gamma},
\end{equation}
which are solutions to the algebraic equation
\begin{equation}\label{algebraic}
	\tau^2-(n-2)\tau+\gamma=0.
\end{equation}
This parameters are well-known in the case of a single equation and  appear describing
 the behavior of the solutions at zero and at infinity. An heuristic argument shows that the behavior at zero should be driven by the weight $|x|^{- \tau_1}$, while $|x|^{- \tau_2}$ gives us the behavior at infinity. In any case, with this choice of $\tau$, it is easy to check that $(u_\tau,v_\tau)$ weakly solves
\begin{equation}\tag{$\mathcal{W}^*$}\label{eq:doublecriticalsingularequbddweighted}
	\begin{cases}
		\displaystyle -\Delta_\tau u_\tau:= -\operatorname{div}\left(\frac{\nabla u_\tau}{|x|^{2 \tau}} \right) = \frac{u_\tau^{2^*-1}}{|x|^{2^* \tau}} + \nu \frac{\alpha}{|x|^{2^* \tau}} u_\tau^{\alpha-1}  v_\tau^\beta & \text{in}\quad\R^n \vspace{0.2cm}\\
		\displaystyle -\Delta_\tau v_\tau:= -\operatorname{div}\left(\frac{\nabla v_\tau}{|x|^{2 \tau}} \right) = \frac{v_\tau^{2^*-1}}{|x|^{2^* \tau}} + \nu \frac{\beta}{|x|^{2^* \tau}} u_\tau^{\alpha} v_\tau^{\beta-1} & \text{in}\quad\R^n\\
		u_\tau,v_\tau > 0 &  \text{in}\quad\R^n \setminus \{0\}.
	\end{cases}
\end{equation}
Such equation, in the weak formulation, is well defined in  the weighted Sobolev space $D^{1,2}_\tau(\R^n)$ for  $\tau=\tau_1$ as above. This space has been already exploited in the literature and, after defining it in the classical way \cite{kilpe}, we may look at it as the
 completion (clousure) of $C^{\infty}_c(\R^n)$ under the norm
$$
\|u\|_{D^{1,2}_\tau (\R^n)}=\left(\int_{\mathbb{R}^n} \frac{|\nabla u|^2}{|x|^{2\tau}} \, dx\right)^{1/2}.
$$
The weighted Sobolev inequality in this space is related to the  Caffarelli-Kohn-Nireberg inequality \cite{CKN}; we refer also to \cite{Flin,AFP}.\\

\noindent With such a transformation at hand, we shall exploit the Moser iteration scheme to prove the following:
\begin{prop}\label{prop:Moser}
Let $\tau =\tau_1$ and $(u_\tau,v_\tau) \in D^{1,2}_\tau (\R^n) \times D^{1,2}_\tau (\R^n)$ be the solution to \eqref{eq:doublecriticalsingularequbddweighted}. Then $(u_\tau,v_\tau) \in L^\infty(\R^n) \times L^\infty(\R^n)$.
\end{prop}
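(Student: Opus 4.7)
The plan is to run a weighted Moser iteration for the system \eqref{eq:doublecriticalsingularequbddweighted}, starting from $(u_\tau,v_\tau)\in D^{1,2}_\tau(\R^n)\times D^{1,2}_\tau(\R^n)$ (which already embeds into $L^{2^*}$ against the weight $|x|^{-2^*\tau}\,dx$ by the Caffarelli-Kohn-Nirenberg inequality). For a parameter $q\geq 0$, a cutoff $\eta\in C^\infty_c(\R^n)$, and a truncation level $T>0$, I would test the two equations against
\[
\Phi_T\,:=\,\min\{u_\tau,T\}^{q}\,u_\tau\,\eta^2,\qquad \Psi_T\,:=\,\min\{v_\tau,T\}^{q}\,v_\tau\,\eta^2,
\]
which are admissible in the weighted Sobolev sense, and then let $T\to\infty$ by monotone convergence. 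The left-hand sides reproduce the standard Moser bound
\[
\int_{\R^n}\frac{\bigl|\nabla u_\tau^{(q+2)/2}\bigr|^2}{|x|^{2\tau}}\,\eta^2\,dx \;\lesssim\; \text{(RHS)}\;+\;\int_{\R^n}\frac{u_\tau^{q+2}}{|x|^{2\tau}}|\nabla\eta|^2\,dx,
\]
with the analogous inequality for $v_\tau$.

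For the right-hand side I would exploit that both the pure and the coupled nonlinearities carry exactly the same singular weight $|x|^{-2^*\tau}$. The scalar term $|x|^{-2^*\tau}u_\tau^{2^*-1+q+1}$ is handled by splitting the power and applying H\"older with exponents $(\tfrac{n}{2},\tfrac{n}{n-2})$, so that the term $\int |x|^{-(2^*-2)\tau}u_\tau^{2^*-2}\cdot |x|^{-2\tau}(u_\tau^{(q+2)/2}\eta)^2\,dx$ can be absorbed into the weighted Sobolev norm after a smallness step (achieved by localizing $\eta$, and using absolute continuity of the integral of $u_\tau^{2^*}|x|^{-2^*\tau}$). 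The crucial point is that the conformal factor in the CKN embedding is compatible with the scaling: one has
\[
\left(\int_{\R^n}\frac{|w|^{2^*}}{|x|^{2^*\tau}}\,dx\right)^{2/2^*}\,\leq\,C\int_{\R^n}\frac{|\nabla w|^2}{|x|^{2\tau}}\,dx,
\]
applied to $w=u_\tau^{(q+2)/2}\eta$, which is exactly the replacement of the standard Sobolev inequality in the iteration.

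For the coupling terms, which as stressed in Remark~\ref{rem:number} need not be Lipschitz at zero, I would \emph{not} use any regularity of the nonlinearity, only Young's inequality: since $(\alpha-1)+\beta=2^*-1$,
\[
u_\tau^{\alpha-1}v_\tau^{\beta}\,\leq\,\frac{\alpha-1}{2^*-1}u_\tau^{2^*-1}+\frac{\beta}{2^*-1}v_\tau^{2^*-1},
\]
and multiplying by $u_\tau^{q+1}$ and applying Young once more reduces the coupling contribution to a linear combination of $u_\tau^{2^*-1+q+1}$ and $v_\tau^{2^*-1+q+1}$ (with the same weight $|x|^{-2^*\tau}$). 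Summing the inequalities for $u_\tau$ and $v_\tau$ therefore closes an estimate purely in terms of $w_u:=u_\tau^{(q+2)/2}$ and $w_v:=v_\tau^{(q+2)/2}$, and a standard bootstrap starting from $q=0$ (where $u_\tau,v_\tau\in L^{2^*}(|x|^{-2^*\tau}dx)$) promotes integrability to $u_\tau,v_\tau\in L^p_{\mathrm{loc}}(|x|^{-2^*\tau}dx)$ for every $p<\infty$, and then to $L^\infty_{\mathrm{loc}}$.

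Two final points: the same argument run with cutoffs supported in $\R^n\setminus B_R$ together with the observation that $|x|^{-2^*\tau}\sim |x|^{-2^*\tau}$ is integrable at infinity against the $L^{2^*}$-tails of $u_\tau,v_\tau$ yields the corresponding $L^\infty$ bound on the exterior, so combining interior and exterior estimates produces the global $L^\infty$ conclusion. I expect the main technical obstacle to be, as anticipated in Remark~\ref{rem:number}, the control of the coupling power $u_\tau^{\alpha-1}v_\tau^\beta$ when $\alpha<2$ or $\beta<2$; this is precisely the step where one cannot rely on an estimate like $|u_\tau^{\alpha-1}v_\tau^\beta|\leq C(u_\tau+v_\tau)^{2^*-1}$ with a Lipschitz constant, but instead must use the explicit Young-type inequality above, which is the only place where the assumption $\alpha+\beta=2^*$ enters the iteration in an essential way.
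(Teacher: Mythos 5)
Your plan is correct in outline, but it is a genuinely different route from the paper's. The paper runs a cutoff-free, global Brezis--Kato/Moser argument: it tests with the Lipschitz truncated power $\Upsilon_{\eta,T}$ of \eqref{eq:PHItau} (no spatial cutoff at all), applies the weighted Sobolev inequality \eqref{Sobolev} on all of $\R^n$, treats the coupling term not by Young's inequality but by the pointwise comparison on $\{u_\tau\le v_\tau\}$ and $\{u_\tau> v_\tau\}$ combined with the monotonicity of $t\mapsto \Upsilon^2(t)\,t^{\alpha-2}$ (see \eqref{eq:Moser3}), which reduces $\Upsilon^2(u_\tau)u_\tau^{\alpha-2}v_\tau^{\beta}$ to the diagonal quantities $\Upsilon^2(u_\tau)u_\tau^{2^*-2}$ and $\Upsilon^2(v_\tau)v_\tau^{2^*-2}$, and it obtains the smallness needed for absorption by splitting along the level sets $\{u_\tau\le m_u\}$, $\{u_\tau\ge m_u\}$ with thresholds $m_u,m_v$ chosen at each step, rather than by shrinking the support of a cutoff. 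The payoff of that choice is that the recursive inequality \eqref{eq:Moser7}--\eqref{eq:Moser10} is global, so the final bound is automatically uniform on $\R^n$ and no interior/exterior patching is required. Your Young-inequality treatment of the coupling, which uses only $(\alpha-1)+\beta=2^*-1$, is perfectly legitimate and arguably simpler; as you observe, boundedness never needs Lipschitz continuity of the coupling, only domination by critical powers with the same weight, and this is also true of the paper's argument (the non-Lipschitz issue of Remark \ref{rem:number} really bites in the moving plane part, not here).

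Two steps of your sketch need more care before they constitute a proof. First, the absorption constant at the step with exponent $\eta_k$ grows like $\eta_k$ (in the paper it comes from $t\Upsilon'(t)\le\eta\Upsilon(t)$; in your normalization from the chain-rule factor for $u_\tau^{(q+2)/2}$), so the smallness you need for $\int_{\mathrm{supp}}u_\tau^{2^*}|x|^{-2^*\tau}\,dx$ is step-dependent; if you produce it only by ``localizing the cutoff'', the nested supports could degenerate as $q\to\infty$. The standard fixes are exactly the paper's level-set splitting (bounded part on $\{u_\tau\le m_u\}$ plus a tail made small relative to $1/\eta_k$), or a two-stage argument: one smallness step to upgrade $u_\tau,v_\tau$ to some $L^{p}(|x|^{-2^*\tau}dx)$ with $p>2^*$, after which the coefficients $u_\tau^{2^*-2},v_\tau^{2^*-2}$ lie in a class strictly better than $L^{n/2}$ and the remaining iteration needs no smallness, only constants growing polynomially in $\eta_k$, which is harmless since the exponents grow geometrically. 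Second, your sentence about the exterior region says nothing as written; to make it an argument you must run the iteration with cutoffs equal to $1$ outside a large ball (their gradient terms live on a fixed annulus and are controlled by H\"older against the small tails of $\int u_\tau^{2^*}|x|^{-2^*\tau}$ and $\int v_\tau^{2^*}|x|^{-2^*\tau}$), and check admissibility of the resulting non-compactly-supported test functions in $D^{1,2}_\tau(\R^n)$ by density; alternatively, the global cutoff-free version as in the paper avoids the interior/exterior assembly altogether.
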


\begin{proof}
For any $\eta \geq 1$ and $T>0$ we define the following Lispchitz function
\begin{equation}\label{eq:PHItau}
	\Upsilon(t)=\Upsilon_{\eta,T}(t):=\begin{cases}
		t^\eta, & \text{if } 0 \leq t \leq T\\
		\eta T^{\eta-1 } (t-T)+T^\eta, & \text{if } t > T.
	\end{cases}
\end{equation}
Making easy computations one can check that for any $w \in D^{1,2}_\tau (\R^n)$ it holds the following inequality in the weak distributional meaning
\begin{equation}\label{eq:convexity}
-\Delta_\tau \Upsilon(w) \leq - \Upsilon'(w) \Delta_\tau w. 
\end{equation}
Now we make all the computations on the first equation of system \eqref{eq:doublecriticalsingularequbddweighted}.
We remark that, since $\Upsilon$ is a Lipschitz function, then $\Upsilon(u_\tau), \Upsilon(v_\tau) \in D^{1,2}_\tau (\R^n)$. Hence, it holds the Sobolev inequality
\begin{equation}\label{Sobolev}
	\left(\int_{\R^n} \frac{|\Upsilon(w)|^{2^*}}{|x|^{2^*\tau}} \,dx\right)^{\frac{1}{2^*}} \leq \mathcal{C}_S \left(\int_{\R^n} \frac{|\nabla \Upsilon(w)|^2}{|x|^{2\tau}} \, dx\right)^{\frac{1}{2}} \qquad \forall w \in D^{1,2}_\tau(\R^n).
\end{equation}
By \eqref{eq:convexity}, we deduce
\begin{equation}\label{eq:Moser1}
	\begin{split}
	\int_{\R^n} \Upsilon(u_\tau) (- \Delta_\tau \Upsilon(u_\tau)) \, dx &\leq \int_{\R^n} \Upsilon(u_\tau) \Upsilon'(u_\tau) (- \Delta_\tau u_\tau) \,dx\\
	&= \int_{\R^n} \Upsilon(u_\tau) \Upsilon'(u_\tau) \left(\frac{u_\tau^{2^*-1}}{|x|^{2^* \tau}} + \nu \frac{\alpha}{|x|^{2^* \tau}} u_\tau^{\alpha-1}  v_\tau^\beta\right) \,dx\\
	&\leq \eta \int_{\R^n} \Upsilon^2(u_\tau)  \left(\frac{u_\tau^{2^*-2}}{|x|^{2^* \tau}} + \nu \frac{\alpha}{|x|^{2^* \tau}} u_\tau^{\alpha-2}  v_\tau^\beta\right) \,dx,
	\end{split}
\end{equation}
where in the last inequality we used the estimate $t \Upsilon'(t) \leq \eta \Upsilon(t)$ for every $t>0$. Integrating by parts in the left hand side of \eqref{eq:Moser1}, and using \eqref{Sobolev} we deduce
\begin{equation}\label{eq:Moser2}
	\left(\int_{\R^n} \frac{|\Upsilon(u_\tau)|^{2^*}}{|x|^{2^*\tau}} \,dx\right)^{\frac{2}{2^*}} \leq \eta \mathcal{C}_S \int_{\R^n} \Upsilon^2(u_\tau)  \left(\frac{u_\tau^{2^*-2}}{|x|^{2^* \tau}} + \nu \frac{\alpha}{|x|^{2^* \tau}} u_\tau^{\alpha-2}  v_\tau^\beta\right) \,dx.
\end{equation}
Arguing in the same way with the second equation of \eqref{eq:doublecriticalsingularequbddweighted}, we obtain
\begin{equation}\label{eq:Moser2'}
	\left(\int_{\R^n} \frac{|\Upsilon(v_\tau)|^{2^*}}{|x|^{2^*\tau}} \,dx\right)^{\frac{2}{2^*}} \leq \eta \mathcal{C}_S \int_{\R^n} \Upsilon^2(v_\tau)  \left(\frac{v_\tau^{2^*-2}}{|x|^{2^* \tau}} + \nu \frac{\beta}{|x|^{2^* \tau}} u_\tau^{\alpha}  v_\tau^{\beta-2}\right) \,dx.
\end{equation}
Our aim is to apply the Moser's iteration scheme (see \cite{moser}). In order to do this, we take into account that $u_\tau,v_\tau \in D^{1,2}_\tau(\R^n)$. Now, let 
$$\eta:=\frac{2^*}{2}$$
and $m_u, m_v \in \R^+$ to be chosen later. We claim to give an estimate of the right hand side of \eqref{eq:Moser2}. We point out that it is easy to check that the function $g(t):=\Upsilon(t)t^{\alpha-2}$ is non-decreasing for every $t \geq 0$ and for every $\alpha>1$. Hence, taking into account this fact and using the definition of $\Upsilon$ in \eqref{eq:PHItau}, we have

\begin{equation}\label{eq:Moser3}
	\begin{split}
		\eta \mathcal{C}_S \int_{\R^n} \Upsilon^2(u_\tau)  &\frac{u_\tau^{2^*-2}}{|x|^{2^* \tau}} \, dx \, + \, \eta \mathcal{C}_S \nu \alpha \int_{\R^n} \frac{\Upsilon^2(u_\tau) u_\tau^{\alpha-2} v_\tau^\beta}{|x|^{2^* \tau }}     \,dx\\
		=& \eta \mathcal{C}_S \int_{\R^n \cap \{u_\tau \leq m_u\}} \Upsilon^2(u_\tau)  \frac{u_\tau^{2^*-2}}{|x|^{2^* \tau}} \, dx + \eta \mathcal{C}_S \int_{\R^n \cap \{u_\tau > m_u\}} \frac{\Upsilon^2(u_\tau)}{|x|^{2\tau}}  \frac{u_\tau^{2^*-2}}{|x|^{(2^*-2) \tau}} \, dx\\
		&+\eta \mathcal{C}_S \nu \alpha \left[\int_{\R^n \cap\{u_\tau \leq v_\tau\}} \frac{\Upsilon^2(u_\tau) u_\tau^{\alpha-2} v_\tau^\beta}{|x|^{2^*\tau}}    + \int_{\R^n \cap\{u_\tau > v_\tau\}} \frac{\Upsilon^2(u_\tau) u_\tau^{\alpha-2} v_\tau^\beta}{|x|^{2^*\tau}}    \,dx \right]\\
		\leq & \eta \mathcal{C}_S \int_{\R^n \cap \{u_\tau \leq m_u\}} \Upsilon^2(u_\tau)  \frac{u_\tau^{2^*-2}}{|x|^{2^* \tau}} \, dx + \eta \mathcal{C}_S \int_{\R^n \cap \{u_\tau > m_u\}} \frac{\Upsilon^2(u_\tau)}{|x|^{2\tau}}  \frac{u_\tau^{2^*-2}}{|x|^{(2^*-2) \tau}} \, dx\\
		&+\eta \mathcal{C}_S \nu \alpha \left[\int_{\R^n \cap\{u_\tau \leq v_\tau\}} \frac{\Upsilon^2(v_\tau) v_\tau^{2^*-2}}{|x|^{2^*\tau}}    + \int_{\R^n \cap\{u_\tau > v_\tau\}} \frac{\Upsilon^2(u_\tau) u_\tau^{2^*-2}}{|x|^{2^*\tau}}    \,dx \right]\\
		\leq & \eta \mathcal{C}_S \int_{\R^n \cap \{u_\tau \leq m_u\}} \Upsilon^2(u_\tau)  \frac{u_\tau^{2^*-2}}{|x|^{2^* \tau}} \, dx + \eta \mathcal{C}_S \int_{\R^n \cap \{u_\tau > m_u\}} \frac{\Upsilon^2(u_\tau)}{|x|^{2\tau}}  \frac{u_\tau^{2^*-2}}{|x|^{(2^*-2) \tau}} \, dx\\
		&+\eta \mathcal{C}_S \nu \alpha \left[\int_{\R^n} \frac{\Upsilon^2(v_\tau) v_\tau^{2^*-2}}{|x|^{2^*\tau}}    + \int_{\R^n} \frac{\Upsilon^2(u_\tau) u_\tau^{2^*-2}}{|x|^{2^*\tau}}    \,dx \right]\,.
\end{split}
\end{equation}		
Using H\"older's inequality with conjugate exponents $\left(\frac{2^*}{2}, \frac{2^*}{2^*-2}\right)$, we get
\begin{equation}\label{eq:Moser3'}
	\begin{split}
		\eta \mathcal{C}_S \int_{\R^n} \Upsilon^2(u_\tau)  &\frac{u_\tau^{2^*-2}}{|x|^{2^* \tau}} \, dx \, + \, \eta \mathcal{C}_S \nu \alpha \int_{\R^n} \frac{\Upsilon^2(u_\tau) u_\tau^{\alpha-2} v_\tau^\beta}{|x|^{2^* \tau }}     \,dx\\		
		\leq&  \eta \mathcal{C}_S (1+\nu \alpha) \int_{\R^n \cap \{u_\tau \leq m_u\}} \Upsilon^2(u_\tau)  \frac{m_u^{2^*-2}}{|x|^{2^* \tau}} \, dx\\
		&+ \eta \mathcal{C}_S (1+\nu \alpha)\left(\int_{\R^n \cap \{u_\tau \geq m_u\}} \frac{\Upsilon^{2^*}(u_\tau)}{|x|^{2^*\tau}} \,dx\right)^{\frac{2}{2^*}}  \left(\int_{\R^n \cap \{u_\tau \geq m_u\}} \frac{u_\tau^{2^*}}{|x|^{2^* \tau}} \, dx\right)^\frac{2^*-2}{2^*}\\
		&+\eta \mathcal{C}_S \nu \alpha \int_{\R^n \cap \{v_\tau \leq m_v\}} \Upsilon^2(v_\tau)  \frac{m_v^{2^*-2}}{|x|^{2^* \tau}} \, dx\\
		&+ \eta \mathcal{C}_S  \nu \alpha\left(\int_{\R^n \cap \{v_\tau \geq m_v\}} \frac{\Upsilon^{2^*}(v_\tau)}{|x|^{2^*\tau}} \,dx\right)^{\frac{2}{2^*}}  \left(\int_{\R^n \cap \{v_\tau \geq m_v\}} \frac{v_\tau^{2^*}}{|x|^{2^* \tau}} \, dx\right)^\frac{2^*-2}{2^*}.
	\end{split}
\end{equation}
 Arguing in an analogous way to \eqref{eq:Moser3} for the second equation of \eqref{eq:doublecriticalsingularequbddweighted}, we are able to deduce   

\begin{equation}\label{eq:Moser4}
	\begin{split}
	\eta \mathcal{C}_S \int_{\R^n} \Upsilon^2(v_\tau)  &\frac{v_\tau^{2^*-2}}{|x|^{2^* \tau}} \, dx \, + \, \eta \mathcal{C}_S \nu \beta \int_{\R^n} \frac{\Upsilon^2(v_\tau) u_\tau^{\alpha} v_\tau^{\beta-2}}{|x|^{2^* \tau }}     \,dx\\
	\leq&  \eta \mathcal{C}_S (1+\nu \beta) \int_{\R^n \cap \{v_\tau \leq m_v\}} \Upsilon^2(v_\tau)  \frac{m_v^{2^*-2}}{|x|^{2^* \tau}} \, dx\\
	&+ \eta \mathcal{C}_S (1+\nu \beta)\left(\int_{\R^n \cap \{v_\tau \geq m_v\}} \frac{\Upsilon^{2^*}(v_\tau)}{|x|^{2^*\tau}} \,dx\right)^{\frac{2}{2^*}}  \left(\int_{\R^n \cap \{v_\tau \geq m_v\}} \frac{v_\tau^{2^*}}{|x|^{2^* \tau}} \, dx\right)^\frac{2^*-2}{2^*}\\
	&+\eta \mathcal{C}_S \nu \beta \int_{\R^n \cap \{u_\tau \leq m_u\}} \Upsilon^2(u_\tau)  \frac{m_u^{2^*-2}}{|x|^{2^* \tau}} \, dx\\
	&+ \eta \mathcal{C}_S  \nu \beta \left(\int_{\R^n \cap \{u_\tau \geq m_u\}} \frac{\Upsilon^{2^*}(u_\tau)}{|x|^{2^*\tau}} \,dx\right)^{\frac{2}{2^*}}  \left(\int_{\R^n \cap \{u_\tau \geq m_u\}} \frac{u_\tau^{2^*}}{|x|^{2^* \tau}} \, dx\right)^\frac{2^*-2}{2^*}.
\end{split}
\end{equation}
Hence, thanks to \eqref{eq:Moser3'}, by \eqref{eq:Moser2} we obtain
\begin{equation} \label{eq:Moser5}
	\begin{split}
		&\left(\int_{\R^n}  \frac{|\Upsilon(u_\tau)|^{2^*}}{|x|^{2^*\tau}} \,dx\right)^{\frac{2}{2^*}} \leq \eta \mathcal{C}_S (1+\nu \alpha) \int_{\R^n \cap \{u_\tau \leq m_u\}} \Upsilon^2(u_\tau)  \frac{m_u^{2^*-2}}{|x|^{2^* \tau}} \, dx\\
		&\qquad + \eta \mathcal{C}_S (1+\nu \alpha)\left(\int_{\R^n \cap \{u_\tau \geq m_u\}} \frac{\Upsilon^{2^*}(u_\tau)}{|x|^{2^*\tau}} \,dx\right)^{\frac{2}{2^*}}  \left(\int_{\R^n \cap \{u_\tau \geq m_u\}} \frac{u_\tau^{2^*}}{|x|^{2^* \tau}} \, dx\right)^\frac{2^*-2}{2^*}\\
		&\qquad +\eta \mathcal{C}_S \nu \alpha \int_{\R^n \cap \{v_\tau \leq m_v\}} \Upsilon^2(v_\tau)  \frac{m_v^{2^*-2}}{|x|^{2^* \tau}} \, dx\\
		&\qquad + \eta \mathcal{C}_S  \nu \alpha\left(\int_{\R^n \cap \{v_\tau \geq m_v\}} \frac{\Upsilon^{2^*}(v_\tau)}{|x|^{2^*\tau}} \,dx\right)^{\frac{2}{2^*}}  \left(\int_{\R^n \cap \{v_\tau \geq m_v\}} \frac{v_\tau^{2^*}}{|x|^{2^* \tau}} \, dx\right)^\frac{2^*-2}{2^*}.
	\end{split}
\end{equation}
Arguing in a similar way with $v_\tau$, thanks to \eqref{eq:Moser4}, by \eqref{eq:Moser2'} we deduce
\begin{equation} \label{eq:Moser6}
	\begin{split}
		&\left(\int_{\R^n} \frac{|\Upsilon(v_\tau)|^{2^*}}{|x|^{2^*\tau}} \,dx\right)^{\frac{2}{2^*}} \leq  \eta \mathcal{C}_S (1+\nu \beta) \int_{\R^n \cap \{v_\tau \leq m_v\}} \Upsilon^2(v_\tau)  \frac{m_v^{2^*-2}}{|x|^{2^* \tau}} \, dx\\
		&\qquad + \eta \mathcal{C}_S (1+\nu \beta)\left(\int_{\R^n \cap \{v_\tau \geq m_v\}} \frac{\Upsilon^{2^*}(v_\tau)}{|x|^{2^*\tau}} \,dx\right)^{\frac{2}{2^*}}  \left(\int_{\R^n \cap \{v_\tau \geq m_v\}} \frac{v_\tau^{2^*}}{|x|^{2^* \tau}} \, dx\right)^\frac{2^*-2}{2^*}\\
		&\qquad +\eta \mathcal{C}_S \nu \beta \int_{\R^n \cap \{u_\tau \leq m_u\}} \Upsilon^2(u_\tau)  \frac{m_u^{2^*-2}}{|x|^{2^* \tau}} \, dx\\
		&\qquad + \eta \mathcal{C}_S  \nu \beta \left(\int_{\R^n \cap \{u_\tau \geq m_u\}} \frac{\Upsilon^{2^*}(u_\tau)}{|x|^{2^*\tau}} \,dx\right)^{\frac{2}{2^*}}  \left(\int_{\R^n \cap \{u_\tau \geq m_u\}} \frac{u_\tau^{2^*}}{|x|^{2^* \tau}} \, dx\right)^\frac{2^*-2}{2^*}.
	\end{split}
\end{equation}
Summing  both \eqref{eq:Moser5} and \eqref{eq:Moser6}, we deduce that
\begin{equation} \label{eq:Moser7}
	\begin{split}
	&\left(\int_{\R^n} \frac{|\Upsilon(u_\tau)|^{2^*}}{|x|^{2^*\tau}} \,dx \right)^{\frac{2}{2^*}} + 	\left(\int_{\R^n} \frac{|\Upsilon(v_\tau)|^{2^*}}{|x|^{2^*\tau}} \,dx\right)^{\frac{2}{2^*}}\\
	&\ \ \leq  \eta \mathcal{C}_S(1+ 2^*\nu) \left[\int_{\R^n \cap \{u_\tau \leq m_u\}} \Upsilon^2(u_\tau)  \frac{m_u^{2^*-2}}{|x|^{2^* \tau}} \, dx +\int_{\R^n \cap \{v_\tau \leq m_v\}} \Upsilon^2(v_\tau)  \frac{m_v^{2^*-2}}{|x|^{2^* \tau}} \, dx\right]\\
	&\qquad + \eta \mathcal{C}_S (1+ 2^*\nu)\left(\int_{\R^n \cap \{u_\tau \geq m_u\}} \frac{\Upsilon^{2^*}(u_\tau)}{|x|^{2^*\tau}} \,dx\right)^{\frac{2}{2^*}}  \left(\int_{\R^n \cap \{u_\tau \geq m_u\}} \frac{u_\tau^{2^*}}{|x|^{2^* \tau}} \, dx\right)^\frac{2^*-2}{2^*}\\
	&\qquad + \eta \mathcal{C}_S (1+ 2^*\nu) \left(\int_{\R^n \cap \{v_\tau \geq m_v\}} \frac{\Upsilon^{2^*}(v_\tau)}{|x|^{2^*\tau}} \,dx\right)^{\frac{2}{2^*}}  \left(\int_{\R^n \cap \{v_\tau \geq m_v\}} \frac{v_\tau^{2^*}}{|x|^{2^* \tau}} \, dx\right)^\frac{2^*-2}{2^*}.
\end{split}
\end{equation}
Since $u_\tau, v_\tau \in D^{1,2}_\tau(\R^n)$, we can fix $m_u,m_v$ such that
\begin{equation}
	\left(\int_{\R^n \cap \{u_\tau \geq m_u\}} \frac{u_\tau^{2^*}}{|x|^{2^* \tau}} \, dx\right)^\frac{2^*-2}{2^*} , \left(\int_{\R^n \cap \{v_\tau \geq m_v\}} \frac{v_\tau^{2^*}}{|x|^{2^* \tau}} \, dx\right)^\frac{2^*-2}{2^*} \leq \frac{1}{2\eta \mathcal{C}_S (1+ 2^*\nu)} 
\end{equation}
Hence, from \eqref{eq:Moser7} we obtain
\begin{equation} \label{eq:Moser8}
\begin{split}
		&\left(\int_{\R^n} \frac{|\Upsilon(u_\tau)|^{2^*}}{|x|^{2^*\tau}} \,dx \right)^{\frac{2}{2^*}} + 	\left(\int_{\R^n} \frac{|\Upsilon(v_\tau)|^{2^*}}{|x|^{2^*\tau}} \,dx\right)^{\frac{2}{2^*}}\\
		&\ \ \leq  \eta \mathcal{C}_S(1+ 2^*\nu) \left[m_u^{2^*-2}\int_{\R^n}  \frac{\Upsilon^2(u_\tau) }{|x|^{2^* \tau}} \, dx +m_v^{2^*-2}\int_{\R^n}   \frac{\Upsilon^2(v_\tau)}{|x|^{2^* \tau}} \, dx\right]\\
		&\ \ \leq  \eta C \left[\int_{\R^n}  \frac{u_\tau^{2\eta}}{|x|^{2^* \tau}} \, dx +\int_{\R^n}   \frac{v_\tau^{2\eta}}{|x|^{2^* \tau}} \, dx\right],
\end{split}
\end{equation}
where $C$ depends on $m_u,m_v,\nu,n$ and $\mathcal{C}_S$. Moreover, we used that $\Upsilon(t) \leq t^\eta$ and that $u_\tau, v_\tau \in D^{1,2}_\tau(\R^n)$. Hence, recalling that $\eta=\frac{2^*}{2}$ and taking $T \rightarrow +\infty$, thanks also to Fatou's lemma one has
\begin{equation} \label{eq:Moser9-}
\left(\int_{\R^n} \frac{u_\tau^{2^*\eta}}{|x|^{2^*\tau}} \,dx \right)^{\frac{2}{2^*}} + 	\left(\int_{\R^n} \frac{v_\tau^{2^*\eta}}{|x|^{2^*\tau}} \,dx\right)^{\frac{2}{2^*}} < +\infty.
\end{equation}
Moreover, since $\frac{2}{2^*}<1$, we also deduce
\begin{equation} \label{eq:Moser9}
	\left(\int_{\R^n} \frac{u_\tau^{2^*\eta}}{|x|^{2^*\tau}} \,dx   + 	 \int_{\R^n} \frac{v_\tau^{2^*\eta}}{|x|^{2^*\tau}} \,dx\right)^{\frac{2}{2^*}}  \leq \left(\int_{\R^n} \frac{u_\tau^{2^*\eta}}{|x|^{2^*\tau}} \,dx \right)^{\frac{2}{2^*}} + 	\left(\int_{\R^n} \frac{v_\tau^{2^*\eta}}{|x|^{2^*\tau}} \,dx\right)^{\frac{2}{2^*}} < +\infty.
\end{equation}
Now, we are able to apply the Moser's iteration scheme, in order to prove our result. For any $k \geq 1$, let us define the sequence $\{\eta_k\}$ by
$$2 \eta_{k+1} + 2^*-2=2^*\eta_k,$$
and we set $\eta_1 := \frac{2^*}{2}$. The choice of $\eta_k$ at each step is clear in the first term of the right hand side of \eqref{eq:Moser3}. Then starting again by \eqref{eq:Moser3} and \eqref{eq:Moser4}, iterating we deduce that
\begin{equation} \label{eq:Moser10}
	\begin{split}
		&\left(\int_{\R^n} \frac{u_\tau^{2^*\eta_{k+1}}}{|x|^{2^*\tau}} \,dx + \int_{\R^n} \frac{v_\tau^{2^*\eta_{k+1}}}{|x|^{2^*\tau}} \,dx\right)^{\frac{1}{2^*(\eta_{k+1}-1)}}\\
		&\qquad \leq   (C \eta_{k+1})^{\frac{1}{2(\eta_{k+1}-1)}} \left(\int_{\R^n}  \frac{u_\tau^{2^*\eta_k}}{|x|^{2^* \tau}} \, dx + \int_{\R^n}   \frac{v_\tau^{2^*\eta_k}}{|x|^{2^* \tau}} \, dx\right)^{\frac{1}{2^*(\eta_k-1)}}.
	\end{split}
\end{equation}
Finally, we set
\begin{equation} \label{settingMoser}
	\mathcal{I}_k := \left(\int_{\R^n}  \frac{u_\tau^{2^*\eta_k}}{|x|^{2^* \tau}} \, dx + \int_{\R^n}   \frac{v_\tau^{2^*\eta_k}}{|x|^{2^* \tau}} \, dx\right)^{\frac{1}{2^*(\eta_k-1)}}, \ \ \mathcal{C}_k:=(C \eta_{k+1})^{\frac{1}{2(\eta_{k+1}-1)}},
\end{equation}
and we obtain the recursive inequality 
$$\mathcal{I}_{k+1} \leq \mathcal{C}_k \mathcal{I}_k, \ \ k \geq 1.$$ By induction we can easily deduce that
\begin{equation}\label{eq:LogMoser}
\begin{split}
	\log \mathcal{I}_{k+1} &\leq \sum_{j=2}^{k+1} \log\mathcal{C}_j  + \log \mathcal{I}_1\leq \sum_{j=2}^{+\infty} \log\mathcal{C}_j  + \log \mathcal{I}_1 < +\infty,
\end{split}
\end{equation}
where in the last inequality we used that the series $\sum_{j=2}^{+\infty} \log \mathcal{C}_j$ converges. Hence, thanks to the monotonicity of the integral and by \eqref{eq:LogMoser}, for every $R>0$, we have
\begin{equation}
\log \left(\left(\int_{B_R}  \frac{u_\tau^{2^*\eta_k}}{|x|^{2^* \tau}} \, dx + \int_{B_R}   \frac{v_\tau^{2^*\eta_k}}{|x|^{2^* \tau}} \, dx\right)^{\frac{1}{2^*(\eta_k-1)}}\right) \leq C.
\end{equation}
Since $|x|<R$, we have
\begin{equation}
\begin{split}
	\frac{\tau}{\eta_k-1} \log \left(\frac{1}{R}\right) &+ \log \left(\left(\int_{B_R}  u_\tau^{2^*\eta_k} \, dx \right)^{\frac{1}{2^*(\eta_k-1)}}\right)\\
	&\leq \frac{\tau}{\eta_k-1} \log \left(\frac{1}{R}\right) + \log \left(\left(\int_{B_R}  u_\tau^{2^*\eta_k} \, dx + \int_{B_R}   v_\tau^{2^*\eta_k} \, dx\right)^{\frac{1}{2^*(\eta_k-1)}}\right) \leq C.
\end{split}
\end{equation}
Passing to the limit for $k \rightarrow + \infty$ and thus $\eta_{k} \rightarrow + \infty$,  we deduce that
$$(u_\tau,v_\tau) \in L^\infty(B_R) \times L^\infty(B_R).$$
The thesis follows by the arbitrariness of $R$.

\end{proof}
%
%
In the previous section we showed that any solution $(u,v) \in H^1_{loc}(\R^n) \times H^1_{loc}(\R^n)$ to \eqref{eq:doublecriticalsingularequbdd} is radial, and, hence,  \eqref{eq:doublecriticalsingularequbddbis}  becomes 
\begin{equation} \tag{$\mathcal{R}_{\mathcal{S}}^*$}\label{eq:RadialCritProb}
	\begin{cases}
		\displaystyle (r^{n-1}u')'+r^{n-1}\left(\gamma \frac{u}{r^2} + u^{2^*-1}+ \nu \alpha u^{\alpha-1} v^\beta\right)=0 & \text{in}\quad (0,+\infty)\\
		\displaystyle \displaystyle (r^{n-1}v')'+r^{n-1}\left(\gamma \frac{v}{r^2} + v^{2^*-1}+ \nu \beta u^\alpha v^{\beta-1}\right)=0 & \text{in}\quad (0,+\infty)\\
		u,v > 0 &  \text{in}\quad (0,+\infty).
	\end{cases}
\end{equation} 
Obviously also $u_\tau,v_\tau$ are radial and they satisfy\\
\begin{equation} \tag{$\mathcal{R}_{\mathcal{W}}^*$}\label{eq:RadialCritProbbis}
	\begin{cases}
		\displaystyle (r^{n-1-2\tau}u_\tau')'+ r^{n-1-2^*\tau} \left(u_\tau^{2^*-1}+ \nu \alpha u_\tau^{\alpha-1} v_\tau^\beta\right)=0 & \text{in}\quad (0,+\infty)\\
		\displaystyle \displaystyle (r^{n-1-2\tau}v_\tau')'+ r^{n-1-2^*\tau} \left(v_\tau^{2^*-1}+ \nu \beta u_\tau^\alpha v_\tau^{\beta-1}\right)=0 & \text{in}\quad (0,+\infty)\\
		u,v > 0 &  \text{in}\quad (0,+\infty).
	\end{cases}
\end{equation} 
Thanks to to Proposition \ref{prop:Moser}, we deduce that 
\begin{center}
	$|x|^{\tau_1}\cdot u(x)$ and $|x|^{\tau_1} \cdot v(x)$ are bounded functions in $\R^n$\,.
\end{center}
This is a first information regarding the behavior of the solutions at zero (the behavior that we get at infinity is not sharp). In any case we are in position to get a complete picture of the asymptotic behavior of the solutions by proving the following:

\begin{prop}\label{prop:asBehav}
Let $(u,v) \in D^{1,2}(\R^n) \times D^{1,2}(\R^n)$ be any solution to \eqref{eq:doublecriticalsingularequbdd}. Then, we have
\begin{equation} \label{eq:behaviorat0}
	\lim_{|x| \rightarrow 0^+} |x|^{\tau_1} \cdot  u(x) =:u_0, \qquad \lim_{|x| \rightarrow 0^+} |x|^{\tau_1} \cdot v(x) =:v_0,
\end{equation}
where $u_0, v_0 \in (0,+\infty)$.
Moreover, we have
\begin{equation}\label{eq:behavioratinfty}
	\lim_{|x| \rightarrow +\infty} |x|^{\tau_2} \cdot  u(x) =:u_\infty, \qquad \lim_{|x| \rightarrow 0^+} |x|^{\tau_2} \cdot v(x) =:v_\infty,
\end{equation}
where $u_\infty, v_\infty \in (0,+\infty)$.
\end{prop}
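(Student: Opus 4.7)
The plan is to combine the radial symmetry from Theorem~\ref{thm:main1} with the $L^\infty$ bound of Proposition~\ref{prop:Moser} to reduce the problem to an ODE analysis, and then to transfer the information at the origin to infinity via Kelvin inversion. Since by \eqref{eq:uvtau} and Proposition~\ref{prop:Moser} both $r^{\tau_1}u(r)$ and $r^{\tau_1}v(r)$ are radial and globally bounded, I would introduce the Emden--Fowler variables $t=\log r$ and
\[Y(t):=r^{\tau_1}u(r),\qquad Z(t):=r^{\tau_1}v(r).\]
A direct computation starting from \eqref{eq:RadialCritProb}, combined with the algebraic identity \eqref{algebraic} which kills the linear $Y$ and $Z$ terms, yields the ODE system
\[Y''+\sigma Y'=-e^{\frac{2\sigma t}{n-2}}\bigl(Y^{2^*-1}+\nu\alpha\,Y^{\alpha-1}Z^{\beta}\bigr),\]
\[Z''+\sigma Z'=-e^{\frac{2\sigma t}{n-2}}\bigl(Z^{2^*-1}+\nu\beta\,Y^{\alpha}Z^{\beta-1}\bigr),\]
where $\sigma:=n-2-2\tau_1=2\sqrt{\Lambda_n-\gamma}>0$. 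By Proposition~\ref{prop:Moser} the unknowns $Y,Z$ are bounded on $\mathbb{R}$, and the exponential weight forces the nonlinear term to vanish exponentially fast as $t\to-\infty$.

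The core of the argument is to show that $t\mapsto e^{\sigma t}Y'(t)$ is non-positive. Rewriting the first equation as $(e^{\sigma t}Y')'=-e^{\sigma t}e^{\frac{2\sigma t}{n-2}}(Y^{2^*-1}+\nu\alpha Y^{\alpha-1}Z^\beta)\le 0$ shows that $e^{\sigma t}Y'$ is non-increasing in $t$, and hence admits a supremum $L\in(-\infty,+\infty]$ attained as $t\to-\infty$. A standard ODE argument rules out $L\neq 0$: if $L>0$, then eventually $Y'(t)\gtrsim\tfrac{L}{2}e^{-\sigma t}\to+\infty$ as $t\to-\infty$, so integrating from any fixed time forces $Y(t)\to-\infty$, contradicting $Y\ge 0$; while if $L<0$ the same integration yields $Y(t)\to+\infty$, violating the $L^\infty$ bound. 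Therefore $L=0$, so by monotonicity $e^{\sigma t}Y'(t)\le 0$ on all of $\mathbb{R}$ and $Y$ is non-increasing in $t$. Being strictly positive and bounded, $Y$ converges as $t\to-\infty$ to $u_0:=\sup_{\mathbb{R}}Y>0$; the same reasoning applied to $Z$ yields $v_0>0$, which proves \eqref{eq:behaviorat0}.

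To obtain \eqref{eq:behavioratinfty} I would use the Kelvin transform $\hat u(x):=|x|^{2-n}u(x/|x|^2)$ and $\hat v(x):=|x|^{2-n}v(x/|x|^2)$. A direct verification shows that $(\hat u,\hat v)$ is again a radial positive solution of \eqref{eq:doublecriticalsingularequbdd} on $\mathbb{R}^n\setminus\{0\}$, and that it belongs to $D^{1,2}(\mathbb{R}^n)\times D^{1,2}(\mathbb{R}^n)$ because Kelvin inversion is an isometry of $D^{1,2}(\mathbb{R}^n)$ and preserves the Hardy-critical term $\gamma|x|^{-2}$. Applying the part of the proposition already established to $(\hat u,\hat v)$ gives $\lim_{|x|\to 0^+}|x|^{\tau_1}\hat u(x)=\hat u_0>0$. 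Using the Vieta identity $\tau_1+\tau_2=n-2$ extracted from \eqref{algebraic} and setting $y=x/|x|^2$,
\[|x|^{\tau_1}\hat u(x)=|x|^{\tau_1+2-n}\,u(y)=|y|^{n-2-\tau_1}u(y)=|y|^{\tau_2}u(y),\]
so letting $|x|\to 0^+$ (equivalently $|y|\to+\infty$) yields $u_\infty=\hat u_0>0$, and analogously $v_\infty>0$. I expect the main obstacle to be the ODE step showing $\lim_{t\to-\infty}e^{\sigma t}Y'(t)=0$: it is precisely here that the specific choice $\tau=\tau_1$ (the smaller root of \eqref{algebraic}) and the $L^\infty$ control coming from Moser's iteration combine to exclude the exponential blow-up in $Y'$ that would otherwise destroy the argument.
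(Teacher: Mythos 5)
Your proposal is correct and follows the same skeleton as the paper's proof: after reducing to the radial ODE, both arguments hinge on the observation that $e^{\sigma t}Y'(t)=r^{n-1-2\tau_1}u_{\tau_1}'(r)$ is monotone (the paper phrases this as $r^{n-1-2\tau}u_\tau'$ being strictly decreasing), combine it with the $L^\infty$ bound from Proposition~\ref{prop:Moser} to get existence of the limits at the origin, and then obtain the behavior at infinity exactly as you do, via the Kelvin transform together with $\tau_1+\tau_2=n-2$. The one genuine difference is how positivity of $u_0,v_0$ is established: the paper stops at ``$u_{\tau_1},v_{\tau_1}$ are monotone and bounded near $0$, hence admit limits in $[0,+\infty)$'' and then invokes the strong maximum principle for the degenerate operator $-\Delta_{\tau_1}$ (citing Abdellaoui--Colorado--Peral) to exclude $u_0=0$ or $v_0=0$; you instead show that $\lim_{t\to-\infty}e^{\sigma t}Y'(t)=0$ by ruling out a nonzero limit through direct integration (a positive limit forces $Y\to-\infty$, a negative one forces $Y\to+\infty$, contradicting positivity and the Moser bound respectively), whence $Y'\le 0$ on all of $\mathbb{R}$ and $u_0=\sup_{\mathbb{R}}Y>0$. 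Your variant is a correct and slightly more self-contained route, as it avoids the weighted strong maximum principle altogether and in addition yields global monotonicity of $r^{\tau_1}u$ (consistent with the explicit profile \eqref{eq:terraciniane}); the paper's use of the maximum principle is more robust in that it does not require the quantitative integration step. The only point worth making explicit in your write-up is the (standard, by elliptic regularity away from the origin) smoothness of the radial profiles needed to justify the classical ODE manipulations, and the Kelvin-invariance computation for the coupling term, which works precisely because $\alpha+\beta=2^*$.
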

\begin{proof}
Let $r:=|x|$. By Problem \eqref{eq:RadialCritProbbis} we deduce that both $r^{n-1-2\tau}u_\tau'(r)$ and $r^{n-1-2\tau}v_\tau'(r)$ are strictly decreasing functions. Therefore, we get $r^{n-1-2\tau}u_{\tau_1}'(r)$ and $r^{n-1-2\tau}v_{\tau_1}'(r)$ have a sign near zero, and this immediately implies that $u_{\tau_1}'(r),v_{\tau_1}'(r)$ have a sign near zero. Hence $u_{\tau_1}, v_{\tau_1}$ are monotone  bounded functions and then they must admit limit $u_0,v_0 \in [0,+\infty)$ as $r \rightarrow 0^+$. By the strong maximum principle (see e.g. \cite{ACP}) for the system
$$\begin{cases}
	-\Delta_{\tau_1} u_{\tau_1} \geq 0 & \text{in } B_\delta(0)\\
	-\Delta_{\tau_1} v_{\tau_1} \geq 0 & \text{in } B_\delta(0)\\
	u_{\tau_1},v_{\tau_1} \geq 0  & \text{in } B_\delta(0),
\end{cases}$$
since $u_{\tau_1}, v_{\tau_1} > 0$ in $B_\delta(0) \setminus \{0\}$, we get $u_{\tau_1}, v_{\tau_1} > 0$ in $B_\delta(0)$ and hence $u_0,v_0>0$. Now, we have in force  the estimate as $r \rightarrow 0^+$, i.e.~\eqref{eq:behaviorat0} holds true. Since $(u,v) \in D^{1,2}(\R^n) \times D^{1,2}(\R^n)$, it is standard to prove that the Kelvin transformations $(\hat u, \hat v) \in D^{1,2}(\R^n) \times D^{1,2}(\R^n)$. Hence, we get
\begin{equation} 
		\lim_{|x| \rightarrow 0^+} |x|^{\tau_1} \cdot  \hat u\left(\frac{x}{|x|^2}\right) = \hat u_0, \qquad \lim_{|x| \rightarrow 0^+} |x|^{\tau_1} \cdot \hat v\left(\frac{x}{|x|^2}\right) = \hat v_0,
\end{equation}
which implies  \eqref{eq:behaviorat0}. By the last one, if we change variable $y:=x/|x|^2$ and use the identity $\tau_1+\tau_2=n-2$, we deduce
\begin{equation} 
\begin{split}
	\hat u_0&=\lim_{|x| \rightarrow 0^+} |x|^{\tau_1} \cdot  \hat u(x) = \lim_{|y| \rightarrow +\infty} |y|^{-\tau_1} \cdot |y|^{n-2}u(y) = \lim_{|y| \rightarrow +\infty} |y|^{\tau_2} \cdot u(y),\\
	\hat v_0&=\lim_{|x| \rightarrow 0^+} |x|^{\tau_1} \cdot \hat v(x) = \lim_{|y| \rightarrow +\infty} |y|^{-\tau_1} \cdot |y|^{n-2} v(y) = \lim_{|y| \rightarrow +\infty} |y|^{\tau_2} \cdot v(y),
\end{split}
\end{equation}
which implies \eqref{eq:behavioratinfty}.

\end{proof}

%

\section{Uniqueness and classification of solutions} \label{sec4}

The aim of this section is to classify solutions to \eqref{eq:doublecriticalsingularequbddbis}  up to the associated rescaling.
Given  $(u,v)$  a solution to the problem \eqref{eq:RadialCritProb}, consider 
\begin{equation} \label{eq:newVariables}
\begin{split}
	y_u(t):=r^\delta u(r) \quad &\text{and} \quad y_v(t):=r^\delta v(r),\\
\end{split}
\end{equation}
where $t:=\log r$ with $r>0$ and $\delta=\frac{n-2}{2}$. By direct computation, it is possible to deduce that system \eqref{eq:RadialCritProb} is equivalent to 
\begin{equation}
	\begin{cases} \tag{$\mathcal{R}_*'$}\label{eq:RadialCritProbBis}
		y''_u-(\delta^2-\gamma) y_u + y_u^{2^*-1} + \nu \alpha y_u^{\alpha-1} y_v^\beta=0 & \text{in}\quad \R\\
		y''_v-(\delta^2-\gamma) y_v + y_v^{2^*-1} + \nu \beta y_u^\alpha y_v^{\beta-1}=0 & \text{in}\quad \R\\ 
		y_u,y_v>0 &\text{in}\quad \R.
	\end{cases}
\end{equation}

Next, we will prove that $y_u$ and $y_v$ reach a unique maximum point, which actually is the same for both components. To show this fact, we will use the asymptotic behavior of $y_u$ and $y_v$.

\begin{lem}\label{lemasymptot}
Let $y_u,y_v$ be the functions defined in \eqref{eq:newVariables}, they satisfy
$$
\lim_{t \rightarrow -\infty} y_u(t)= 0, \quad \lim_{t \rightarrow +\infty} y_u(t)= 0, \quad \lim_{t \rightarrow -\infty} y_v(t)= 0, \quad \lim_{t \rightarrow +\infty} y_v(t)= 0.
$$
Moreover, there exist $\bar c_u, \bar C_u, \underline{c}_u, \underline{C}_u, \bar c_v, \bar C_v, \underline{c}_v, \underline{C}_v, M$ positive constants such that
\begin{eqnarray}
	&& \underline{c}_u e^{(\delta-\tau_1)t} u_{\tau_1}(e^t) \leq y_u(t) \leq \underline{C}_u e^{(\delta-\tau_1)t} u_{\tau_1}(e^t) \text{ for } t \leq -M,\\	
	&& \bar c_u e^{(\delta-\tau_2)t} u_{\tau_2}(e^t) \leq y_u(t) \leq \bar C_u e^{(\delta-\tau_2)t} u_{\tau_2}(e^t) \text{ for } t \geq M,\\
	&& \underline{c}_v e^{(\delta-\tau_1)t} v_{\tau_1}(e^t) \leq y_v(t) \leq \underline{C}_v e^{(\delta-\tau_1)t} v_{\tau_1}(e^t) \text{ for } t \leq -M,\\	
	&& c_u e^{(\delta-\tau_2)t} v_{\tau_2}(e^t) \leq y_v(t) \leq C_v e^{(\delta-\tau_2)t} v_{\tau_2}(e^t) \text{ for } t \geq M,
\end{eqnarray}
where $\tau_1$ and $\tau_2$ were introduced \eqref{eq:tau}. Furthermore, we have that 
\begin{equation}\label{eq:quotient}
	\lim_{t \rightarrow - \infty} \frac{y_u(t)}{y_v(t)} = L_1  \in (0,+\infty) \ \text{ and } \ 	\lim_{t \rightarrow + \infty} \frac{y_u(t)}{y_v(t)} = L_2  \in (0,+\infty).
\end{equation}
\end{lem}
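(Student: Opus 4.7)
The lemma is essentially a transcription of Proposition \ref{prop:asBehav} into the logarithmic coordinate $t=\log r$, so the plan is to set up the change of variables carefully and then quote the asymptotic information already established.

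\textbf{Step 1: Identities in logarithmic coordinates.} I would start by writing, for any exponent $\tau>0$ and using the radiality of $u,v$,
\begin{equation*}
y_u(t)=e^{\delta t}u(e^t)=e^{(\delta-\tau)t}\,e^{\tau t}u(e^t)=e^{(\delta-\tau)t}u_\tau(e^t),
\end{equation*}
and analogously for $y_v$. Thus every one of the four inequalities asserted in the lemma reduces to controlling $u_{\tau_i}(e^t)$ (respectively $v_{\tau_i}(e^t)$) from above and below by positive constants on the corresponding half-line in $t$.

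\textbf{Step 2: Boundedness of $u_{\tau_i}(e^t), v_{\tau_i}(e^t)$ in the relevant regimes.} I would invoke Proposition \ref{prop:asBehav}, which gives $u_{\tau_1}(x)\to u_0>0$ and $v_{\tau_1}(x)\to v_0>0$ as $|x|\to 0^+$, and $u_{\tau_2}(x)\to u_\infty>0$, $v_{\tau_2}(x)\to v_\infty>0$ as $|x|\to+\infty$. Hence there exists $M>0$ such that
\begin{equation*}
\tfrac{u_0}{2}\le u_{\tau_1}(e^t)\le 2u_0,\qquad \tfrac{v_0}{2}\le v_{\tau_1}(e^t)\le 2v_0 \qquad \text{for }t\le -M,
\end{equation*}
and symmetrically for $u_{\tau_2},v_{\tau_2}$ when $t\ge M$. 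Combined with the identity in Step~1 this immediately yields the four two-sided estimates with explicit constants (e.g.\ $\underline{c}_u=u_0/2$, $\underline{C}_u=2u_0$, etc.).

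\textbf{Step 3: The limits $y_u,y_v\to 0$.} Since $\gamma\in[0,\Lambda_n)$ we have $\tau_1<\delta<\tau_2$, so $\delta-\tau_1>0$ and $\delta-\tau_2<0$. Plugging into the bounds of Step~2 gives $y_u(t)\asymp e^{(\delta-\tau_1)t}\to 0$ as $t\to-\infty$ and $y_u(t)\asymp e^{(\delta-\tau_2)t}\to 0$ as $t\to+\infty$; the argument for $y_v$ is identical.

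\textbf{Step 4: The ratios $L_1$ and $L_2$.} Finally, for any $\tau$ the identity of Step~1 gives
\begin{equation*}
\frac{y_u(t)}{y_v(t)}=\frac{u(e^t)}{v(e^t)}=\frac{u_\tau(e^t)}{v_\tau(e^t)}.
\end{equation*}
Choosing $\tau=\tau_1$ and letting $t\to-\infty$, Proposition \ref{prop:asBehav} yields $L_1=u_0/v_0\in(0,+\infty)$; choosing $\tau=\tau_2$ and letting $t\to+\infty$ gives $L_2=u_\infty/v_\infty\in(0,+\infty)$. This closes the proof. There is no substantive obstacle: the whole content is the algebraic identity of Step~1 together with the positivity and finiteness of the limits supplied by Proposition \ref{prop:asBehav}, which was the real work done in the previous section.
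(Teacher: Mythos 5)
Your proposal is correct and takes essentially the same route as the paper, whose proof of this lemma is just the remark that it follows from Propositions \ref{prop:Moser} and \ref{prop:asBehav}; your Steps 1--4 simply make explicit the identity $y_u(t)=e^{(\delta-\tau)t}u_\tau(e^t)$, the inequality $\tau_1<\delta<\tau_2$, and the positivity and finiteness of the limits $u_0,v_0,u_\infty,v_\infty$, which is exactly the intended argument.
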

The proof  follows by Proposition \ref{prop:Moser} and \ref{prop:asBehav} .

Let us point out that, unfortunately, one could not apply the ODE approaches of Abdellaoui-Felli-Peral \cite{abdellaoui} and Terracini \cite{terracini} in a straightforward way. On the contrary, we shall prove the following result. 

\begin{lem} \label{lem:extremums}
The functions $y_u$ and $y_v$ admits only a simultaneous global maximum at a point $\tilde{t}\in\mathbb{R}$.
\end{lem}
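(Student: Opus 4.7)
The plan is to leverage the Hamiltonian structure of the autonomous system~\eqref{eq:RadialCritProbBis} together with the asymptotic information from Lemma~\ref{lemasymptot}. A direct computation---multiply the first equation by $y_u'$ and the second by $y_v'$ and add---shows that the quantity
$$
H(t):=\tfrac{1}{2}\bigl[(y_u')^2+(y_v')^2\bigr]-\tfrac{\delta^2-\gamma}{2}\bigl(y_u^2+y_v^2\bigr)+\tfrac{1}{2^*}\bigl(y_u^{2^*}+y_v^{2^*}\bigr)+\nu\,y_u^\alpha y_v^\beta
$$
is a first integral of the system. The exponential decay of $y_u,y_v$ at $\pm\infty$ combined with standard ODE estimates applied to~\eqref{eq:RadialCritProbBis} gives $y_u',y_v'\to 0$ at $\pm\infty$, so $H\equiv 0$ on $\R$. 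Since $y_u$ and $y_v$ are continuous, strictly positive and vanish at infinity, each attains a global maximum on $\R$.

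The core step is to prove that $y_u$ and $y_v$ share a common critical point $\tilde t$. I would argue by contradiction: let $t_u$ be the smallest $t$ where $y_u'(t)=0$---necessarily a local maximum in view of $y_u(-\infty)=0$ and $y_u>0$---and define $t_v$ analogously; assume without loss of generality $t_u<t_v$. Evaluating $H(t_u)=0$ together with $y_u'(t_u)=0$ yields $\tfrac{1}{2}(y_v'(t_u))^2=\Phi(y_u(t_u),y_v(t_u))>0$, where $\Phi$ denotes the potential part of $H$. Just after $t_u$ one has $y_u'<0$, while $y_v'>0$ on the whole of $(-\infty,t_v)$ by the choice of $t_v$. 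Exploiting the cooperativity---namely, that the coupling term $y_u^{\alpha-1}y_v^\beta$ in the first equation of~\eqref{eq:RadialCritProbBis} is monotone increasing in $y_v$---one shows that the forcing acting on $y_u$ strictly dominates its value at $t_u$, so $y_u''$ stays strictly negative and $y_u$ is strictly concave on $(t_u,t_v)$. Iterating this observation at the subsequent critical points of $y_u$ and combining with the identity $H\equiv 0$ contradicts the decay $y_u(+\infty)=0$, since the kinetic budget along the orbit would eventually become strictly negative. This forces $t_u=t_v=:\tilde t$.

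Once a common critical point $\tilde t$ is available, the uniqueness assertion follows by time-reversal. Because $y_u(\tilde t),y_v(\tilde t)>0$, the right-hand side of~\eqref{eq:RadialCritProbBis} is locally Lipschitz near the Cauchy state $(y_u(\tilde t),y_v(\tilde t),0,0)$: the potential non-Lipschitz behaviour of the coupling when $\alpha<2$ or $\beta<2$ only occurs on $\{y_u=0\}\cup\{y_v=0\}$, which is avoided here. Hence the Cauchy problem is uniquely solvable in a neighbourhood of $\tilde t$. The reflected pair $t\mapsto\bigl(y_u(2\tilde t-t),\,y_v(2\tilde t-t)\bigr)$ solves the same system with identical Cauchy data at $\tilde t$, so by uniqueness the solution is symmetric about $\tilde t$. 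Combining this symmetry with the absence of critical points of $y_u,y_v$ in $(-\infty,\tilde t)$ from the previous step, we conclude that $\tilde t$ is the unique simultaneous global maximum of both $y_u$ and $y_v$, as claimed. The main obstacle is precisely the contradiction argument in the middle paragraph: the cooperative coupling is essential to convert the one-sided information at $t_u$ into a definite sign of $y_u''$ on $(t_u,t_v)$, but turning this into an effective obstruction to the prescribed decay at $+\infty$ requires careful bookkeeping of the energy along the orbit.
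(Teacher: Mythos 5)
Your first and last steps are sound: the quantity $H$ is indeed a first integral of \eqref{eq:RadialCritProbBis} and, once a \emph{common} critical point $\tilde t$ with $y_u(\tilde t),y_v(\tilde t)>0$ is available, the reflection argument via local Lipschitz continuity and uniqueness for the Cauchy problem does give symmetry of $(y_u,y_v)$ about $\tilde t$ and hence the conclusion. The problem is that the crux of the lemma is precisely the existence of a common critical point, and your middle paragraph does not prove it. First, the claim that the first critical point $t_u$ of $y_u$ is ``necessarily a local maximum'' is unjustified (it could a priori be a degenerate critical point). More seriously, the assertion that cooperativity forces $y_u''<0$ on all of $(t_u,t_v)$ does not follow: from the equation one has $y_u''=(\delta^2-\gamma)y_u-y_u^{2^*-1}-\nu\alpha\,y_u^{\alpha-1}y_v^{\beta}$, and after $t_u$ the decrease of $y_u$ pushes the terms $-y_u^{2^*-1}$ and (when $\alpha>2$) $-\nu\alpha y_u^{\alpha-1}y_v^\beta$ \emph{upward}, which can offset the increase of $y_v^\beta$; indeed for $\alpha>2$ and $y_u$ small with $y_v$ bounded away from zero the right-hand side is positive, so the claimed concavity can simply fail, and nothing in the stated monotonicity of the coupling in $y_v$ rescues it. Finally, the concluding contradiction (``the kinetic budget along the orbit would eventually become strictly negative'') is never derived: the identity $H\equiv 0$ reads $\tfrac12[(y_u')^2+(y_v')^2]=\tfrac{\delta^2-\gamma}{2}(y_u^2+y_v^2)-\tfrac1{2^*}(y_u^{2^*}+y_v^{2^*})-\nu y_u^\alpha y_v^\beta$, whose right-hand side is positive whenever both components are small, so it is perfectly compatible with the prescribed decay at $+\infty$ and yields no automatic obstruction. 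You yourself flag this paragraph as the main obstacle; as written it is a heuristic, not a proof, so the lemma is not established.

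For comparison, the paper avoids any pointwise concavity or energy-accounting argument and instead runs a one-dimensional moving-plane scheme directly on \eqref{eq:RadialCritProbBis}: one sets $w_{u,\lambda}(t)=y_u(t)-y_u(2\lambda-t)$, $w_{v,\lambda}(t)=y_v(t)-y_v(2\lambda-t)$, tests the weak formulation with $w_{u,\lambda}^+$, $w_{v,\lambda}^+$, and uses the asymptotics of Lemma~\ref{lemasymptot} together with the device of Remark~\ref{fsdfd} to handle the possibly non-Lipschitz coupling (exactly the delicate case $\alpha<2$ or $\beta<2$), obtaining $y_u\le y_{u,\lambda}$, $y_v\le y_{v,\lambda}$ for $\lambda$ very negative and then sliding the reflection point as in the proof of Theorem~\ref{thm:main1}. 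This produces monotonicity on a half-line and symmetry about a common point for both components simultaneously, which is what your energy approach would need to reproduce; if you wish to salvage your route, the missing ingredient is a genuine argument ruling out $t_u\neq t_v$, and at present that argument is absent.
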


\begin{proof}
To prove our result we will use the moving plane procedure for solutions to \eqref{eq:RadialCritProbBis}.
For $\lambda \in \R$, let us define the following functions
$$
w_{u,\lambda}(t):=y_u(t)-y_{u,\lambda}(t)=y_u(t) - y_u(2\lambda-t), \quad w_{v,\lambda}(t):=y_v(t)-y_{v,\lambda}(t)=y_v(t) - y_v(2\lambda-t),
$$
Notice that $w_u, w_v$ solves the following system
\begin{equation}
	\begin{cases}\label{eq:RadialCritProbBis2}
			- w''_{u,\lambda} = -(\delta^2-\gamma_1) w_{u,\lambda} + (y_u^{2^*-1}-y_{u,\lambda}^{2^*-1}) + \nu \alpha (y_u^{\alpha-1} y_v^\beta-y_{u,\lambda}^{\alpha-1} y_{v,\lambda}^\beta)  & \text{in}\quad \R\\
			- w''_{v,\lambda} = -(\delta^2-\gamma_2) w_{v,\lambda} + (y_v^{2^*-1}-y_{v,\lambda}^{2^*-1}) + \nu \beta (y_u^{\alpha} y_v^{\beta-1}-y_{u,\lambda}^{\alpha} y_{v,\lambda}^{\beta-1}) & \text{in}\quad \R.
		\end{cases}
	\end{equation}
We pass to the weak formulation of \eqref{eq:RadialCritProbBis2} given by
\begin{equation}
	\begin{split}\label{eq:RadialCritProbBis2Weak}
		\int_\R w'_{u,\lambda} \varphi' \, dt = &-(\delta^2-\gamma)  \int_\R w_{u,\lambda} \varphi \, dt + \int_\R (y_u^{2^*-1}-y_{u,\lambda}^{2^*-1}) \varphi \, dt \\
		&+ \nu \alpha \int_\R (y_u^{\alpha-1} y_v^\beta-y_{u,\lambda}^{\alpha-1} y_{v,\lambda}^\beta) \varphi \, dt  \qquad \forall \varphi \in C^1_c(\R),\\
		\int_\R w'_{v,\lambda} \psi' \, dt =&- \int_\R (\delta^2-\gamma) w_{v,\lambda} \psi \, dt + \int_\R (y_v^{2^*-1}-y_{v,\lambda}^{2^*-1}) \psi' \, dt \\
		&+ \nu \beta \int_\R (y_u^{\alpha} y_v^{\beta-1}-y_{u,\lambda}^{\alpha} y_{v,\lambda}^{\beta-1}) \psi \, dt \qquad \forall \psi \in C^1_c(\R).
	\end{split}
\end{equation}
Thanks to Lemma \ref{lemasymptot} and arguing in a similar way to the proof of Theorem \ref{thm:main1}, by density argument we can choose the following test functions
$$\varphi:= w_{u, \lambda}^+  \qquad \text{and} \qquad \psi:= w_{v, \lambda}^+,$$
Moreover, we point out that $0 \leq w_{u,\lambda}^+ \leq y_u$ and  $0 \leq w_{v,\lambda}^+ \leq y_v$. Putting the test functions into the weak formulations, \eqref{eq:RadialCritProbBis2Weak} becomes
\begin{equation}
	\begin{split}\label{eq:RadialCritProbBis2Weak1}
		\int_{(-\infty, \lambda]} [(w^+_{u,\lambda})']^2  \, dt = & -(\delta^2-\gamma)  \int_{(-\infty, \lambda]} w_{u,\lambda} w^+_{u,\lambda} \, dt + \int_{(-\infty, \lambda]} (y_u^{2^*-1}-y_{u,\lambda}^{2^*-1}) w^+_{u,\lambda} \, dt \\
		&+ \nu \alpha \int_{(-\infty, \lambda]} (y_u^{\alpha-1} y_v^\beta-y_{u,\lambda}^{\alpha-1} y_{v,\lambda}^\beta) w^+_{u,\lambda} \, dt,\\
		\int_{(-\infty, \lambda]} [(w^+_{v,\lambda})']^2  \, dt = & -(\delta^2-\gamma)  \int_{(-\infty, \lambda]} w_{v,\lambda} w^+_{v,\lambda} \, dt + \int_{(-\infty, \lambda]} (y_v^{2^*-1}-y_{v,\lambda}^{2^*-1}) w^+_{v,\lambda} \, dt \\
		&+ \nu \beta \int_\R (y_u^{\alpha} y_v^{\beta-1}-y_{u,\lambda}^{\alpha} y_{v,\lambda}^{\beta-1}) w^+_{v,\lambda}   \, dt.
	\end{split}
\end{equation}
Now, we focus our attention on the first equation, and we consider $\lambda \leq -M$ (where $M$ is given by Lemma \ref{lemasymptot}). In particular, we argue as in the proof of Theorem \ref{thm:main1}. Hence, using the definition of $w_{u,\lambda}^+$, the Mean Value's Theorem and Lemma \ref{lemasymptot} 
\begin{equation}
	\begin{split}\label{eq:MovStep2}
		\int_{(-\infty, \lambda]} [(w^+_{u,\lambda})']^2 \, dt \leq & - (\delta^2-\gamma)  \int_{(-\infty, \lambda]} (w^+_{u,\lambda})^2 \, dt  + \int_{(-\infty, \lambda]} y_u^{2^*-2} (w^+_{u,\lambda})^2 \, dt \\
		&+ \nu \alpha  \int_{(-\infty, \lambda]}  y_{v}^\beta (y_u^{\alpha-1} - y_{u,\lambda}^{\alpha-1}) w^+_{u,\lambda}  \, dt\\
		&+ \nu \alpha \int_{(-\infty, \lambda]} y_{u,\lambda}^{\alpha-1} (y_v^\beta - y_{v, \lambda}^\beta) w^+_{u,\lambda}  \, dt.
	\end{split}
\end{equation}		
Arguing as we did for $\mathcal{I}_5$, we get
\begin{equation}
	\begin{split}\label{eq:MovStep2bis}
		\int_{(-\infty, \lambda]} [(w^+_{u,\lambda})']^2 \, dt \leq & - (\delta^2-\gamma)  \int_{(-\infty, \lambda]} (w^+_{u,\lambda})^2 \, dt  + \int_{(-\infty, \lambda]} y_u^{2^*-2} (w^+_{u,\lambda})^2 \, dt \\
		&+ \nu \alpha  \int_{(-\infty, \lambda]} e^{-(\delta-\tau_1)(\alpha-1)t} y_{v}^\beta \left[(e^{(\delta-\tau_1)t}y_u)^{\alpha-1} - (e^{(\delta-\tau_1)t} y_{u,\lambda})^{\alpha-1}\right] w^+_{u,\lambda}  \, dt\\
		&+ \nu \alpha \int_{(-\infty, \lambda]} y_{u,\lambda}^{\alpha-1} (y_v^\beta - y_{v, \lambda}^\beta) w^+_{u,\lambda}  \, dt\\
		= & - (\delta^2-\gamma)  \int_{(-\infty, \lambda]} (w^+_{u,\lambda})^2 \, dt  + \int_{(-\infty, \lambda]} y_u^{2^*-2} (w^+_{u,\lambda})^2 \, dt \\
		&+ \nu \alpha  \int_{(-\infty, \lambda]} e^{-(\delta-\tau_1)(\alpha-1)t} y_{v}^\beta \frac{(e^{(\delta-\tau_1)t}y_u)^{\alpha-1} - (e^{(\delta-\tau_1)t} y_{u,\lambda})^{\alpha-2}}{e^{(\delta-\tau_1)t}y_u - e^{(\delta-\tau_1)t} y_{u,\lambda}} w_{u,\lambda}  w^+_{u,\lambda}\, dt\\
		&+ \nu \alpha \int_{(-\infty, \lambda]} y_{u,\lambda}^{\alpha-1} (y_v^\beta - y_{v, \lambda}^\beta)  w^+_{u,\lambda}  \, dt.\\
	\end{split}
\end{equation}	
Now, using the same idea contained in Remark \ref{fsdfd}, we get
\begin{equation}
	\begin{split}\label{eq:MovStep2tris}
		\int_{(-\infty, \lambda]} [(w^+_{u,\lambda})']^2 \, dt \leq & - (\delta^2-\gamma)  \int_{(-\infty, \lambda]} (w^+_{u,\lambda})^2 \, dt  + \int_{(-\infty, \lambda]} y_u^{2^*-2} (w^+_{u,\lambda})^2 \, dt \\
		&+ \nu \alpha  \int_{(-\infty, \lambda]} e^{-(\delta-\tau_1)(\alpha-1)t} y_{v}^\beta \left[(e^{(\delta-\tau_1)t}y_u)^{\alpha-1} - (e^{(\delta-\tau_1)t} y_{u,\lambda})^{\alpha-1}\right] w^+_{u,\lambda}  \, dt\\
		&+ \nu \alpha \int_{(-\infty, \lambda]} y_{u,\lambda}^{\alpha-1} (y_v^\beta - y_{v, \lambda}^\beta) w^+_{u,\lambda}  \, dt\\
		\leq & - (\delta^2-\gamma)  \int_{(-\infty, \lambda]} (w^+_{u,\lambda})^2 \, dt  + \int_{(-\infty, \lambda]} y_u^{2^*-2} (w^+_{u,\lambda})^2 \, dt \\
		&+ \nu \alpha  \int_{(-\infty, \lambda]} e^{-(\delta-\tau_1)(\alpha-2)t} y_{v}^\beta \frac{(e^{(\delta-\tau_1)t}y_u)^{\alpha-1} - (e^{(\delta-\tau_1)t} y_{u,\lambda})^{\alpha-1}}{e^{(\delta-\tau_1)t}y_u - e^{(\delta-\tau_1)t} y_{u,\lambda}} w_{u,\lambda}  w^+_{u,\lambda}\, dt\\
		&+ \nu \alpha \int_{(-\infty, \lambda] \cap \{y_v > y_{v,\lambda}\}} y_{u,\lambda}^{\alpha-1} (y_v^\beta - y_{v, \lambda}^\beta)  w^+_{u,\lambda}  \, dt.\\
		\leq & - (\delta^2-\gamma)  \int_{(-\infty, \lambda]} (w^+_{u,\lambda})^2 \, dt  + \int_{(-\infty, \lambda]} y_u^{2^*-2} (w^+_{u,\lambda})^2 \, dt \\
		&+ \nu \alpha  \int_{(-\infty, \lambda]} e^{-(\delta-\tau_1)(\alpha-1)t} y_{v}^\beta h'(\sigma) (w^+_{u,\lambda})^2\, dt\\
		&+ \nu \alpha \beta \int_{(-\infty, \lambda] \cap \{y_v > y_{v,\lambda}\}} y_{u,\lambda}^{\alpha-1} y_v^{\beta-1} w^+_{v,\lambda} w^+_{u,\lambda}  \, dt,\\
	\end{split}
\end{equation}	
where  we always get 
$$\frac{(e^{(\delta-\tau_1)t}y_u)^{\alpha-1} - (e^{(\delta-\tau_1)t} y_{u,\lambda})^{\alpha-1}}{e^{(\delta-\tau_1)t}y_u - e^{(\delta-\tau_1)t} y_{u,\lambda}} \leq C.$$
by Remark \ref{fsdfd}
%
%
Now, we recall that all the integrals in \eqref{eq:MovStep2tris} are computed in  $\text{supp}(w_{u,\lambda}^+) \cap (-\infty, \lambda]$. Hence,  we have $y_{u,\lambda}(t) \leq  y_u(t)$ for every $t \in \text{supp}(w_{u,\lambda}^+) \cap (-\infty, \lambda]$. Moreover, since $\lambda \leq - M$,  by Lemma \ref{lemasymptot} we know that 
$$y_u(t) \leq \underline{C}_u e^{(\delta-\tau_1)t} u_{\tau_1}(e^t) \leq C\underline{C}_u e^{(\delta-\tau_1)t}, \ \ \text{ and that } \ \ y_v(t) \leq \underline{C}_v e^{(\delta-\tau_1)t} v_{\tau_1}(e^t) \leq C\underline{C}_v e^{(\delta-\tau_1)t},$$ 
for any $t \leq - M$. Collecting all these information \eqref{eq:MovStep2tris} becomes
\begin{equation}
	\begin{split}\label{eq:MovStepFinal_ubis}		
		\int_{(-\infty, \lambda]} [(w^+_{u,\lambda})']^2 \, dt \leq & - (\delta^2-\gamma)  \int_{(-\infty, \lambda]} (w^+_{u,\lambda})^2 \, dt  + C \int_{(-\infty, \lambda]} e^{(2^*-2)(\delta-\tau_1)t} (w^+_{u,\lambda})^2 \, dt \\
		&+ \nu \alpha (\alpha -1) C \int_{(-\infty, \lambda]} e^{(2^*-2)(\delta-\tau_1)t} (w^+_{u,\lambda})^2  \, dt\\
		&+ \nu \alpha \beta C \int_{(-\infty, \lambda]} e^{(2^*-2)(\delta-\tau_1)t} w_{v,\lambda}^+ w^+_{u,\lambda}  \, dt.
	\end{split}
\end{equation}
Using Young's inequality in the last term of the right hand side, we deduce
\begin{equation}
	\begin{split}\label{eq:MovStepFinal_utris}		
		\int_{(-\infty, \lambda]} [(w^+_{u,\lambda})']^2 \, dt \leq & - (\delta^2-\gamma)  \int_{(-\infty, \lambda]} (w^+_{u,\lambda})^2 \, dt  + \bar C \int_{(-\infty, \lambda]} e^{(2^*-2)(\delta-\tau_1)t} (w^+_{u,\lambda})^2  \, dt  \\
		&+ \nu \alpha \beta C \int_{(-\infty, \lambda]} e^{(2^*-2)(\delta-\tau_1)t} (w_{v,\lambda}^+)^2   \, dt,
	\end{split}
\end{equation}
where $\bar C$ is a positive constant depending only on $\nu, \alpha$ and $\|u_\tau\|_\infty$.
Arguing in the same way for the second equation of \eqref{eq:RadialCritProbBis2Weak1}, we obtain
\begin{equation}
	\begin{split}\label{eq:MovStepFinal_v}		
		\int_{(-\infty, \lambda]} [(w^+_{v,\lambda})']^2 \, dt \leq & - (\delta^2-\gamma)  \int_{(-\infty, \lambda]} (w^+_{v,\lambda})^2 \, dt  + \tilde C \int_{(-\infty, \lambda]} e^{(2^*-2)(\delta-\tau_1)t} (w^+_{v,\lambda})^2  \, dt  \\
	&+ \nu \alpha \beta C \int_{(-\infty, \lambda]} e^{(2^*-2)(\delta-\tau_1)t} (w_{u,\lambda}^+)^2   \, dt,
	\end{split}
\end{equation}
where $\tilde C$ is a positive constant depending only on $\nu, \alpha$ and $\|u_\tau\|_\infty$. 
Summing both \eqref{eq:MovStepFinal_utris}, \eqref{eq:MovStepFinal_v}, we have
\begin{equation}
	\begin{split}\label{eq:MovStepFinal_u+v}		
		\int_{(-\infty, \lambda]} [(w^+_{u,\lambda})']^2  \, dt &+ \int_{(-\infty, \lambda]} [(w^+_{v,\lambda})']^2  \, dt  \\
		& \leq   - (\delta^2-\gamma)  \int_{(-\infty, \lambda]} (w^+_{u,\lambda})^2 \, dt   - (\delta^2-\gamma)  \int_{(-\infty, \lambda]} (w^+_{v,\lambda})^2 \, dt \\
		& + \bar C \int_{(-\infty, \lambda]} e^{(2^*-2)(\delta-\tau_1)t} (w^+_{u,\lambda})^2 \, dt + \tilde C \int_{(-\infty, \lambda]} e^{(2^*-2)(\delta-\tau_1)t} (w^+_{v,\lambda})^2\, dt \\
		&+ \nu \alpha \beta C \int_{(-\infty, \lambda]} e^{(2^*-2)(\delta-\tau_1)t} (w_{v,\lambda}^+)^2   \, dt + \nu \alpha \beta C \int_{(-\infty, \lambda]} e^{(2^*-2)(\delta-\tau_1)t} (w_{u,\lambda}^+)^2   \, dt,
	\end{split}
\end{equation}
Let us define $C_t:= \sup_{t \in (-\infty, \lambda]} e^{(2^*-2)(\delta-\tau_1)t} < +\infty$. Rearranging \eqref{eq:MovStepFinal_u+v}, we achieve the following estimate 
\begin{equation}
	\begin{split}\label{eq:MovStepFinal}		
			\int_{(-\infty, \lambda]} [(w^+_{u,\lambda})']^2  \, dt + \int_{(-\infty, \lambda]} [(w^+_{v,\lambda})']^2  \, dt  	\leq &   \left[C_t(\bar C + \nu \alpha \beta C)- (\delta^2-\gamma)\right] \int_{(-\infty, \lambda]} (w^+_{u,\lambda})^2 \, dt  \\
			& + \left[C_t(\tilde C + \nu \alpha \beta C)- (\delta^2-\gamma)\right] \int_{(-\infty, \lambda]} (w^+_{v,\lambda})^2 \, dt.
	\end{split}
\end{equation}
Now, if we fix $\lambda < -M$ sufficiently negative such that
$$C_t < \min \left\{\frac{\delta^2-\gamma}{\bar C + \nu \alpha \beta C}, \frac{\delta^2-\gamma}{\tilde C + \nu \alpha \beta C}\right\},$$
we obtain
\begin{equation}\label{eq:MovStepFinalConc}		
		\int_{(-\infty, \lambda]} [(w^+_{u,\lambda})']^2 \eta_R^2 \, dt + \int_{(-\infty, \lambda]} [(w^+_{v,\lambda})']^2 \eta_R^2 \, dt \leq 0.
\end{equation}
Hence, we just proved that for any $\lambda < - M$ is true that $y_u \leq y_{u,\lambda}$ and $y_v \leq y_{v,\lambda}$. In other words, if we define		
\begin{equation}\nonumber
	\Lambda=\left\{\lambda \in R : y_u \leq y_{u,s}\,\,\,\text{and}\,\,\, y_v \leq y_{v,s}\,\,\,\text{in} \,\,\, (-\infty, s]\,\,\,\text{for all $s\in(-\infty,\lambda]$}\right\}
\end{equation}
we can say $\Lambda \neq 0$. 

To complete the proof it is sufficient to prove that	$\bar \lambda=\sup\,\Lambda  < +\infty$. The rest of the proof is standard, in particular one can follows a similar argument to the one used in the proof of Theorem \ref{thm:main1}. 

\end{proof}

Now, we shall prove that $(y_u,y_v)$ is a synchronized solution, see \eqref{eq:sinchronizedSol}. This follows if and only if there exists a constant $C>0$ such that $$y_v=C y_u\,.$$ 
In the following result we shall exploit some   ideas from \cite{WeiYao}. The main difference in our setting is the role played by the function 
\begin{equation}\label{eq:f}
	f(s)=s^{2^*-2}+\nu \alpha s^{\alpha-2}-1-\nu\beta s^{\alpha},
\end{equation}
whose number of roots depends on $\alpha, n$ and $\nu$.

\begin{prop}\label{prop:synchro}
	Let $(y_u,y_v)$ be a solution of \eqref{eq:RadialCritProbBis}, then there exists $\tilde{C}>0$ such that $y_u=\tilde C y_v$.
\end{prop}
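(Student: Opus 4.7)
The strategy is to set $\phi(t) := y_u(t)/y_v(t)$, which is positive and continuous on $\R$, and to prove that $\phi$ is constant; plugging $y_u = \tilde C\, y_v$ back into \eqref{eq:RadialCritProbBis} then automatically forces $\tilde C$ to be a positive root of $f$. A direct substitution of $y_u = \phi\, y_v$ into the first equation of \eqref{eq:RadialCritProbBis}, followed by subtracting $\phi$ times the second and multiplying by $y_v$, yields the key identity
\begin{equation*}
\bigl(y_v^2\, \phi'\bigr)' \;=\; -\,\phi\, y_v^{2^*}\, f(\phi) \qquad \text{on } \R,
\end{equation*}
with $f$ as in \eqref{eq:f}.

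To exploit this ODE I would first record the following qualitative information. By Lemma~\ref{lemasymptot}, $\phi$ admits finite positive limits $L_1,L_2$ at $\mp\infty$, so it is bounded above and below by positive constants; $y_v$ decays exponentially at both ends; and the ``Wronskian'' $y_v^2\,\phi' = y'_u y_v - y_u y'_v$ vanishes at $\pm\infty$ to the orders required to justify integrations by parts below. By Lemma~\ref{lem:extremums} the common global maximum $\tilde t$ of $y_u$ and $y_v$ satisfies $y'_u(\tilde t) = y'_v(\tilde t) = 0$, hence $\phi'(\tilde t) = 0$. Setting $\tilde C := \phi(\tilde t) > 0$ and evaluating the displayed ODE at $\tilde t$ gives
\[
\phi''(\tilde t) \;=\; -\,\tilde C\, y_v(\tilde t)^{2^*-2}\, f(\tilde C),
\]
so the curvature of $\phi$ at $\tilde t$ is entirely controlled by the sign of $f(\tilde C)$.

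The benign case is $f(\tilde C) = 0$. Rewriting the ODE in normal form $\phi'' + 2(y'_v/y_v)\phi' + y_v^{2^*-2}\, \phi\, f(\phi) = 0$, its coefficients are smooth (since $y_v > 0$) and the nonlinearity $s\mapsto s\,f(s)$ is locally Lipschitz near $\tilde C$; notice that the potentially non-Lipschitz term $s^{\alpha-2}$ from Remark~\ref{rem:number} causes no trouble here, because $\phi$ stays bounded away from the origin. Since $\phi\equiv\tilde C$ is already a solution with the same initial data $\phi(\tilde t) = \tilde C$, $\phi'(\tilde t) = 0$, Cauchy--Lipschitz uniqueness forces $\phi\equiv\tilde C$ globally and the proof is complete.

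The main obstacle is to rule out $f(\tilde C)\ne 0$. The plan is to multiply the ODE by $\phi - \tilde C$ and integrate over $\R$; after integration by parts (the boundary terms $(\phi - \tilde C)\, y_v^2\, \phi'$ vanish at $\pm\infty$ by the previous remarks) one obtains
\[
\int_{\R} y_v^2\, (\phi')^2 \, dt \;=\; \int_{\R} (\phi - \tilde C)\, \phi\, y_v^{2^*}\, f(\phi)\, dt.
\]
Assume $f(\tilde C) > 0$ without loss of generality: the opposite sign reduces to this case by working with the reciprocal $\psi := 1/\phi = y_v/y_u$, whose companion ODE is obtained by swapping $\alpha$ and $\beta$, and a short computation gives the identity $g(1/\tilde C) = -f(\tilde C)/\tilde C^{2^*-2}$ between the two nonlinearities. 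Under $f(\tilde C) > 0$, $\tilde t$ is a strict local maximum of $\phi$, and a continuation argument based on the sign rule $\mathrm{sgn}\,\phi''(t^*) = -\,\mathrm{sgn}\, f(\phi(t^*))$ at every interior critical point $t^*$ of $\phi$, together with the limits $L_1,L_2$ at $\pm\infty$, gives $\phi \le \tilde C$ on $\R$. The truly hard part, as emphasized in Remark~\ref{rem:number}, is that $f$ may have several zeros in $(0,\tilde C]$; a naive sign analysis of $(\phi-\tilde C)\,f(\phi)$ then fails, and one has to refine the continuation by tracking the minimum of $\phi$ and observing that any intermediate zero $C^\sharp$ of $f$ crossed by $\phi$ at a critical point $t^\sharp$ would force, again via ODE uniqueness, $\phi \equiv C^\sharp$, contradicting $\phi(\tilde t) = \tilde C \ne C^\sharp$. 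Once $(\phi - \tilde C)\, f(\phi) \le 0$ on the range of $\phi$ is established, the integral identity yields $\phi' \equiv 0$, contradicting $f(\tilde C) \ne 0$ and closing the proof.
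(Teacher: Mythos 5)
Your reduction is set up correctly: the identity $(y_v^2\phi')'=-\phi\,y_v^{2^*}f(\phi)$ for $\phi=y_u/y_v$ is exactly the Wronskian relation the paper uses (its \eqref{proof:synchro3} after the normalization \eqref{eq:tildeyuyv}), the evaluation $\phi''(\tilde t)=-\tilde C\,y_v(\tilde t)^{2^*-2}f(\tilde C)$ at the common maximum is right, and your treatment of the case $f(\tilde C)=0$ by Cauchy--Lipschitz uniqueness for $\phi''+2(y_v'/y_v)\phi'+y_v^{2^*-2}\phi f(\phi)=0$ (noting $\phi$ stays away from $0$, so the non-Lipschitz powers are harmless) is correct and in fact cleaner than the corresponding step in the paper, where constancy of $\phi$ is extracted from $f(\phi)\equiv 0$ on an interval.

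The gap is in the case $f(\tilde C)\neq 0$, which is precisely the crux (the multiple-root issue of Remark~\ref{rem:number}). Two claims are asserted but not proved, and neither follows from the sign rule at critical points. First, ``$\phi\le\tilde C$ on $\R$'': the rule $\operatorname{sgn}\phi''(t^*)=-\operatorname{sgn}f(\phi(t^*))$ only tells you that at an interior maximum of $\phi$ one has $f(\phi(t^*))\ge 0$; since $f$ may be positive at levels above $\tilde C$, and since the limits $L_1,L_2$ of Lemma~\ref{lemasymptot} need not lie below $\tilde C=\phi(\tilde t)$ (they are not related a priori to the value of $\phi$ at the common maximum of $y_u,y_v$), nothing prevents $\phi$ from exceeding $\tilde C$. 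Second, even granting $\phi\le\tilde C$, the pointwise sign $(\phi-\tilde C)f(\phi)\le 0$ requires $f\ge 0$ along the whole range of $\phi$ below $\tilde C$; when $f$ has several zeros, $\phi$ can cross an intermediate zero \emph{transversally} (with $\phi'\neq 0$) and spend time where $f(\phi)<0$, making the integrand in your identity positive there. Your proposed remedy---uniqueness when $\phi$ meets a zero $C^\sharp$ of $f$ at a critical point---only excludes tangential crossings, so the needed sign condition is not established and the contradiction with $\int_\R y_v^2(\phi')^2\,dt\ge 0$ does not follow. The paper circumvents exactly this obstruction by a different scheme: it anchors the analysis at $t\to-\infty$, where $\phi\to L$, uses the vanishing of the Wronskian at $-\infty$ and at the common maximum to force $f(\phi)$ to change sign on each half-line, chooses $\tilde C$ as the value of $\phi$ at the \emph{first} zero $\hat t$ of $f(\phi)$ (so $f(\tilde C)=0$ by construction), and then combines the Wronskian identity with energy identities obtained by multiplying the equations by $\tilde y_u'$, $\tilde y_v'$ and integrating up to $\hat t$, where $\tilde y_u(\hat t)=\tilde y_v(\hat t)$; the degenerate situation $f(L)=0$ is handled by a separate small-interval comparison argument before rerunning the same scheme. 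To complete your proof you would need an argument of comparable strength replacing the unproved claims; as written, the hard case is not closed. (A minor additional point: the vanishing of the boundary terms $(\phi-\tilde C)y_v^2\phi'$ at $\pm\infty$ uses exponential decay of $y_u',y_v'$, which should be recorded, though the paper relies on the same fact.)
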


\begin{proof}
%
%
Let $\tilde{C}$ be a positive constant, then the couple 
\begin{equation}\label{eq:tildeyuyv}
(\tilde{y}_u(t),\tilde{y}_v(t)):=\left(\frac{y_u}{\tilde{C}}(t),y_v(t) \right)
\end{equation} 
satisfies the system 
\begin{equation}
	\begin{cases}\label{eq:RadialCritProbBisC}
		\tilde{y}''_u-(\delta^2-\gamma) \tilde{y}_u + \tilde{C}^{2^*-2}\tilde{y}_u^{2^*-1} + \nu \alpha \tilde{C}^{\alpha-2} \tilde{y}_u^{\alpha-1} \tilde{y}_v^\beta=0 & \text{in}\quad \R \\
		\tilde{y}''_v-(\delta^2-\gamma) \tilde{y}_v + \tilde{y}_v^{2^*-1} + \nu \beta \tilde{C}^{\alpha}\tilde{y}_u^\alpha \tilde{y}_v^{\beta-1}=0 & \text{in}\quad \R \\ 
		\tilde{y}_u,\tilde{y}_v>0 &\text{in}\quad\R.
	\end{cases}
\end{equation}

As a first step, multiplying the first equation of \eqref{eq:RadialCritProbBisC} by $y_v$, one obtains
\begin{equation}\label{proof:synchro1}
(\tilde{y}_u' \tilde{y}_v)'-\tilde{y}'_u \tilde{y}'_v-(\delta^2-\gamma) \tilde{y}_u \tilde{y}_v + \tilde{C}^{2^*-2}\tilde{y}_u^{2^*-1} \tilde y_v+ \nu \alpha \tilde{C}^{\alpha-2} \tilde{y}_u^{\alpha-1} \tilde{y}_v^{\beta+1}=0.
\end{equation}

Analogously, multiplying the second equation of \eqref{eq:RadialCritProbBisC} by $y_u$,
\begin{equation}\label{proof:synchro2}
(\tilde{y}_v' \tilde{y}_u)'-\tilde{y}'_u \tilde{y}'_v-(\delta^2-\gamma) \tilde{y}_u \tilde{y}_v + \tilde{y}_v^{2^*-1} \tilde y_u+ \nu \beta \tilde{C}^{\alpha} \tilde{y}_u^{\alpha+1} \tilde{y}_v^{\beta-1}=0.
\end{equation}

Subtracting \eqref{proof:synchro1} by \eqref{proof:synchro2} gives
\begin{equation}\label{proof:synchro3}
\begin{split}
(\tilde{y}_u' \tilde{y}_v-\tilde{y}_v'\tilde{y}_u)' &+ \tilde{C}^{2^*-2}\tilde{y}_u^{2^*-1} \tilde y_v+ \nu \alpha \tilde{C}^{\alpha-2} \tilde{y}_u^{\alpha-1} \tilde{y}_v^{\beta+1}- \tilde{y}_v^{2^*-1} \tilde y_u- \nu \beta \tilde{C}^{\alpha} \tilde{y}_u^{\alpha+1} \tilde{y}_v^{\beta-1}= \\
(\tilde{y}_u' \tilde{y}_v-\tilde{y}_v'\tilde{y}_u)'&+\tilde{y}_v^{2^*-1}\tilde{y}_u f \left(\tilde{C} \frac{\tilde{y}_u}{\tilde{y}_v}\right)=0,
\end{split}
\end{equation}
where the function $f :(0,+\infty)\to\mathbb{R}$ is defined by \eqref{eq:f}.

By Lemma \ref{lemasymptot}, we recall that $\tilde{y}_u(t),\tilde{y}_v(t) \to 0$ as $t \to -\infty$, $\tilde{y}_u(t),\tilde{y}_v(t)\to 0$ as $t\to +\infty$, and by Lemma \ref{lem:extremums} we have $\tilde{y}'_u(0)=\tilde{y}'_v(0)=0$, assuming by translation that $0$ is the global maximum of $\tilde{y}_u,\tilde{y}_v$. By integrating \eqref{proof:synchro3}
$$
\int_{-\infty}^{0} y_v^{2^*-1}y_u f\left(\tilde{C} \frac{\tilde{y}_u}{\tilde{y}_v} \right) \, dt =0, \qquad \int_{0}^{+\infty} \tilde{y}_v^{2^*-1}\tilde{y}_u f\left(\tilde{C} \frac{\tilde{y}_u}{\tilde{y}_v} \right) \, dt=0.
$$ 

If we deduce that $f\left(\tilde{C} \frac{y_u}{y_v}\right)\leq 0$ or $f\left(\tilde{C} \frac{y_u}{y_v}\right)\geq 0$ for every  $t\in (-\infty,0)$, then it would follows $f\left(\tilde{C} \frac{y_u}{y_v}\right)= 0$ in $ (-\infty,0)$.

To achieve this claim, we note that, by Lemma~\ref{lemasymptot} and in particular by \eqref{eq:quotient}, we have
$$\lim_{t\to -\infty}\frac{y_u(t)}{y_v(t)}=L \in (0,+\infty).$$
We have to study two different cases.

{\bf Case 1: $f(L) \neq 0$.} Without loss of generality we assume that
\begin{equation}\label{eq:signassump}
f(L)<0.
\end{equation}
By \eqref{eq:signassump} there exists $\hat t<0$ such that
\begin{equation} \label{eq:fsign}
	f\left(\tilde{C} \frac{\tilde{y}_u(t)}{\tilde{y}_v(t)}\right) < 0 \ \ \forall t \in (-\infty, \hat t) \text{ and } f\left(\tilde{C} \frac{\tilde{y}_u(\hat t)}{\tilde{y}_v(\hat t)}\right) = f\left(\frac{y_u(\hat t)}{y_v(\hat t)}\right) = 0.
\end{equation}
>From now on, we can fix $\tilde C := \frac{y_u(\hat t)}{y_v(\hat t)}$ so that $$f(\tilde C)=0$$ and $\tilde y_u(\hat t) = \tilde y_v(\hat t)$ as in \eqref{eq:tildeyuyv}.
By integrating \eqref{proof:synchro3} in $(-\infty,t)$ for any $t \leq \hat t$, we get
\begin{equation}\label{proof:synchro5}
(\tilde{y}'_u \tilde{y}_v-\tilde{y}'_v \tilde{y}_u)(t)=-\int_{-\infty}^{t} \tilde{y}_v^{2^*-1}\tilde{y}_u f\left(\tilde{C} \frac{\tilde{y}_u}{\tilde{y}_v} \right) \, ds>0.
\end{equation}

Now, let us assume for a while that there exists $0<t_1<\hat t$, such that 
\begin{equation}\label{proof:synchroclaim}
(\tilde{y}'_u \tilde{y}_v-\tilde{y}'_v \tilde{y}_u)(t_1)=0.
\end{equation}

This would immediately imply that
$$
(\tilde{y}'_u \tilde{y}_v-\tilde{y}'_v \tilde{y}_u)(t_1)+\int_{-\infty}^{t_1} \tilde{y}_v^{2^*-1}\tilde{y}_u f\left(\tilde{C} \frac{\tilde{y}_u}{\tilde{y}_v} \right) \, ds =\int_{-\infty}^{t_1} \tilde{y}_v^{2^*-1}\tilde{y}_u f\left(\tilde{C} \frac{\tilde{y}_u}{\tilde{y}_v} \right) \, ds=0.
$$
Then we could conclude that 
$$f\left(\tilde{C} \frac{\tilde{y}_u(t)}{\tilde{y}_v(t)} \right) = f\left(\frac{y_u(t)}{y_v(t)} \right) =0 \ \ \text{ for any } t \in (-\infty, t_1).$$ 
In particular, this fact would immediately give
\begin{equation}\label{contradictionCase1}
	\frac{y_u(t)}{y_v(t)}= \hat C \ \text{ for } \  t\in(-\infty,t_1) \  \text{ such that } \  f(\hat{C})=0.
\end{equation}	
By uniqueness of the limits at $-\infty$ we deduce that $\tilde C = \hat C = L$, which is a contradiction since we assumed $f(L) \neq 0$ in this case.

The rest of the proof is devoted to show \eqref{proof:synchroclaim}. By contradiction, assume that \eqref{proof:synchroclaim} is false. Then, by \eqref{proof:synchro5}, 
\begin{equation}\label{proof:synchronoclaim}
(\tilde{y}'_u \tilde{y}_v-\tilde{y}'_v \tilde{y}_u)(t)>0 \qquad \mbox{ for all } t \in (-\infty, \hat t).
\end{equation}

Now, multiplying the first equation of \eqref{eq:RadialCritProbBisC} by $\tilde{y}'_u$, one gets,
\begin{equation}\label{proof:synchro6}
\left(\frac{(\tilde{y}'_u)^2}{2}\right)'-\frac{(\delta^2-\gamma)}{2} (\tilde{y}_u^2)'+\frac{\tilde{C}^{2^*-2}}{2^*}(\tilde{y}_u^{2^*})'+\tilde{C}^{\alpha-2}\nu \alpha \tilde{y}_u^{\alpha-1} \tilde{y}_v^\beta \tilde{y}_u'=0.
\end{equation}
Similarly, multiplying the second equation of \eqref{eq:RadialCritProbBisC} by $\tilde{y}'_v$,
 \begin{equation}\label{proof:synchro7}
\left(\frac{(\tilde{y}'_v)^2}{2}\right)'-\frac{(\delta^2-\gamma)}{2} (\tilde{y}_v^2)'+\frac{1}{2^*}(\tilde{y}_v^{2^*})'+\tilde{C}^{\alpha}\nu \beta  \tilde{y}_u^\alpha\tilde{y}_v^{\beta -1} \tilde{y}_v'=0.
\end{equation}

Now, subtracting \eqref{proof:synchro6} and \eqref{proof:synchro7}, integrating in $(-\infty, \hat t)$, and using the fact that 
$$\tilde y_u(\hat t) = \tilde y_v(\hat t),$$
we have
\begin{equation}\label{proof:synchro8}
\begin{split}
& \left(\frac{(\tilde{y}_u')^2}{2} - \frac{(\tilde{y}_v')^2}{2}\right)(\hat t) \\
&\qquad \ \ \quad  + \int_{-\infty}^{\hat t} \left(\frac{\tilde{C}^{2^*-2}}{2^*}(\tilde{y}_u^{2^*})' - \frac{1}{2^*}(\tilde{y}_v^{2^*})' +\tilde{C}^{\alpha-2}\nu \alpha \tilde{y}_u^{\alpha-1} \tilde{y}_v^\beta \tilde{y}_u' - \tilde{C}^{\alpha} \nu \beta \tilde{y}_u^\alpha\tilde{y}_v^{\beta -1} \tilde{y}_v'\, \right)ds=0.
\end{split}
\end{equation}
We note that, by \eqref{eq:fsign}, it follows
\begin{equation}\label{eq:relationcoupligterm}
\int_{-\infty}^{\hat t} (\tilde{y}_u^{2^*})' \, ds = \int_{-\infty}^{\hat t} (\tilde{y}_v^{2^*})' \, ds = \int_{-\infty}^{\hat t} (\tilde{y}_u^{\alpha}\tilde{y}_v^{\beta})' \, ds = \int_{-\infty}^{\hat t} \left( \alpha \tilde{y}_u^{\alpha-1} \tilde{y}_v^{\beta} \tilde y_u' + \beta \tilde{y}_u^{\alpha} \tilde{y}_v^{\beta-1} \tilde y_v' \right) \, ds.
\end{equation}
Using \eqref{eq:relationcoupligterm} in \eqref{proof:synchro8}, we get
 \begin{equation}\label{proof:synchro9}
 	\begin{split}
 		\left(\frac{(\tilde{y}_u')^2}{2} - \frac{(\tilde{y}_v')^2}{2}\right)(\hat t) &+ \int_{-\infty}^{\hat t} \alpha \left(\frac{\tilde{C}^{2^*-2}-1}{2^*} + \nu \tilde C^{\alpha-2}\right) \tilde{y}_u^{\alpha-1} \tilde{y}_v^\beta \tilde{y}_u'  \, ds \\
 		&+ \int_{-\infty}^{\hat t} \beta \left(\frac{\tilde{C}^{2^*-2}-1}{2^*}- \nu  \tilde{C}^{\alpha} \right)  \tilde{y}_u^\alpha\tilde{y}_v^{\beta -1} \tilde{y}_v'\, ds=0.
 	\end{split}
\end{equation}
Recalling that $f(\tilde C)=0$, we deduce that
$$\tilde{C}^{2^*-2}-1 = - \nu  \alpha \tilde{C}^{\alpha-2} + \nu \beta \tilde{C}^{\alpha}.$$
Hence, using the fact that $\alpha + \beta = 2^*$ one can easily check that
$$\alpha \left( \frac{\tilde{C}^{2^*-2}-1}{2^*} + \nu \tilde C^{\alpha-2} \right)= - \beta \left( \frac{\tilde{C}^{2^*-2}-1}{2^*}- \nu  \tilde{C}^{\alpha} \right) = \frac{\nu \alpha \beta}{2^*} \tilde C^{\alpha-2}(1+\tilde C^2).$$
Thanks to this information, equation \eqref{proof:synchro9} becomes
 \begin{equation}\label{proof:synchro10}
	\begin{split}
		\left(\frac{(\tilde{y}_u')^2}{2} - \frac{(\tilde{y}_v')^2}{2}\right)(\hat t) + \alpha \left(\frac{\tilde{C}^{2^*-2}-1}{2^*} + \nu \tilde C^{\alpha-2}\right) \int_{-\infty}^{\hat t}  \tilde{y}_u^{\alpha-1} \tilde{y}_v^{\beta -1} (\tilde{y}_u' \tilde{y}_v  -  \tilde{y}_v' \tilde{y}_u )\, ds=0.
	\end{split}
\end{equation}
By \eqref{proof:synchronoclaim} and the fact that $\tilde y_u(\hat t) = \tilde y_v(\hat t)$, we know that
\begin{equation} \label{eq:contra}
	(\tilde{y}'_u \tilde{y}_v-\tilde{y}'_v \tilde{y}_u)(\hat t) = \tilde{y}_u(\hat t) [\tilde{y}'_u (\hat t)-\tilde{y}'_v (\hat t)]>0.
\end{equation} 
Finally, we arrive at the conclusion that \eqref{proof:synchro8} gives us a contradiction, since \eqref{proof:synchronoclaim} and \eqref{eq:contra} should imply the left hand to be strictly positive.

{\bf Case 2: $f(L) = 0$.} Since $\lim_{t\to -\infty}\frac{y_u(t)}{y_v(t)}=L \in (0,+\infty)$, then we  claim that 
\begin{equation}\label{eq:orderedsolutions}
\exists \ \tilde t < 0 \text{ such that } \frac{y_u(t)}{y_v(t)} \leq L \text{ or } \frac{y_u(t)}{y_v(t)} \geq L \text{ for any } t \in (-\infty , \tilde t].
\end{equation}
To prove this, we argue by contradiction assuming that there exists a sequence of points $\{t_k\}_{k \in \N} \subset (-\infty, \tilde t]$ such that 
$$\frac{y_u(t_k)}{y_v(t_k)} = L \text{ for any } k \in \N \text{ and } f\left(\frac{y_u(t)}{y_v(t)}\right)\neq 0  \text{ for any } t \in (t_k,t_{k+1}).$$
Let us consider any closed interval $[t_k,t_{k+1}]$ and fix $\tilde C = L$. Hence, recalling \eqref{eq:tildeyuyv}, we have that $f\left(\tilde C \frac{\tilde y_u(t)}{\tilde y_v(t)}\right) = f\left(\frac{y_u(t)}{y_v(t)}\right)\neq 0$ for any $t \in (t_k,t_{k+1})$. Let us consider 
$$w(t) = \tilde y_u(t) - \tilde y_v(t).$$
Without loss of generality we can suppose that $w(t)>0$ for any $t \in (t_k,t_{k+1})$ and $w(t_k)=w(t_{k+1})=0$. We point out that $(\tilde y_u, \tilde y_v)$ weakly solves \eqref{eq:RadialCritProbBisC}, i.e.
\begin{equation}\label{eq:weakcontra}
\begin{split}
	\int_\R \tilde{y}'_u \varphi' \, dt &= - (\delta^2-\gamma) \int_\R  \tilde{y}_u \varphi \, dt + \int_\R \left(  \tilde{C}^{2^*-2} \tilde{y}_u^{2^*-1} + \nu \alpha \tilde{C}^{\alpha-2} \tilde{y}_u^{\alpha-1} \tilde{y}_v^\beta \right) \varphi \,dt,  \quad \forall \varphi \in C^1_c(\R), \\
	\\
	\int_\R \tilde{y}'_v \varphi' \, dt &= -(\delta^2-\gamma) \int_\R \tilde{y}_v \varphi \, dt+ \int_\R \left(\tilde{y}_v^{2^*-1} + \nu \beta \tilde{C}^{\alpha}\tilde{y}_u^\alpha \tilde{y}_v^{\beta-1} \right) \varphi \, dt, \quad \forall \varphi \in C^1_c(\R).
\end{split}
\end{equation}
We consider
$$\varphi:= w(t) \chi_{(t_k,t_{k+1})} (t)$$
as test function in \eqref{eq:weakcontra} so that, subtracting both the equations, we get
\begin{equation}\label{eq:weakcontra2}
	\begin{split}
		\int_{(t_{k+1},t_k)} (w')^2  \, dt =& - (\delta^2-\gamma) \int_{(t_{k+1},t_k)}  w^2 \, dt \\
		&+ \int_{(t_{k+1},t_k)} \left(  \tilde{C}^{2^*-2} \tilde{y}_u^{2^*-1} + \nu \alpha \tilde{C}^{\alpha-2} \tilde{y}_u^{\alpha-1} \tilde{y}_v^\beta - \tilde{y}_v^{2^*-1}- \nu \beta \tilde{C}^{\alpha}\tilde{y}_u^\alpha \tilde{y}_v^{\beta-1} \right) w \,dt\\
		=& - (\delta^2-\gamma) \int_{(t_{k+1},t_k)}  w^2 \, dt + \int_{(t_{k+1},t_k)} \tilde y_v^{2^*-1}\left[ g\left(\frac{\tilde y_u}{\tilde y_v}\right) - g(1) \right] w \,dt,
	\end{split}
\end{equation}
where $g: (0,+\infty) \rightarrow \R$ is defined as 
$$g(s):= \tilde C^{2^*-2} s^{2^*-1}+\nu \alpha \tilde C^{\alpha-2} s^{\alpha-1}- \nu\beta \tilde C^\alpha s^{\alpha} \quad \text{and} \quad g(1)=C^{2^*-2} +\nu \alpha \tilde C^{\alpha-2} - \nu\beta \tilde C^\alpha = 1,$$
since $0=f(\tilde C)=g(1)-1$. By the Mean Value's Theorem we deduce that there exists $l \in \left(1,\frac{\tilde y_u}{\tilde y_v}\right)$ such that \eqref{eq:weakcontra2} becomes
\begin{equation}\label{eq:weakcontra3}
	\begin{split}
		\int_{(t_{k+1},t_k)} (w')^2  \, dt \leq  - (\delta^2-\gamma) \int_{(t_{k+1},t_k)}  w^2 \, dt + \int_{(t_{k+1},t_k)} \tilde y_v^{2^*-2} |g'(l)| w^2 \,dt.
	\end{split}
\end{equation}
Now we observe that $|g'(l)|\leq C$. Since $\frac{\tilde y_u(t)}{\tilde y_v(t)} \rightarrow 1$ as $t \rightarrow - \infty$, up to redefine $\tilde t$ we can assume that
$$\tilde y_v (t) \leq \left(\frac{\delta^2-\gamma}{C}\right)^{\frac{1}{2^*-2}}$$
so that, by \eqref{eq:weakcontra3}, we get
\begin{equation}\label{eq:weakcontra4}
	\begin{split}
		\int_{(t_{k+1},t_k)} (w')^2  \, dt \leq  0,
	\end{split}
\end{equation}
which implies that $w'(t)=0$ for any $t \in (t_{k+1},t_k)$ and hence $w(t)=C$. But, since $w(t_{k+1})=w(t_k)=0$ we deduce that $w(t) = 0$ for any $t \in (t_{k+1},t_k)$. As consequence we deduce $\tilde y_u(t)= \tilde y_v(t)$ for any $t \in (t_{k+1},t_k)$ providing a contradiction. \\
Thus  $f\left(\frac{y_u(t)}{y_v(t)} \right)$ has a sign at $-\infty$. Hence,  we may run over the arguments of \textbf{Case 1} recovering \eqref{contradictionCase1}. 
Therefore we have that 
$$y_u(t)=\tilde C y_v(t) \text{ for any } t \in \R,$$
with $\tilde C=L$ or, if this is not the case, the argument can be repeated proving the thesis.
\end{proof}

At this point, we are able to prove the classification result.

\begin{proof}[Proof of Theorem \ref{thm:main2}]

Let $(u,v) \in D^{1,2}(\R^n) \times D^{1,2}(\R^n)$ be a solution to \eqref{eq:doublecriticalsingularequbdd}. By Proposition~\ref{prop:synchro}, one gets 
$$y_u = \tilde C y_v.$$
This gives that
$$ u = \tilde C v.$$
By direct computations, it follows that $(u,v)$ solves
\begin{equation}\label{eq:systemconstantC}
\begin{cases}
	\displaystyle -\Delta u \,=\gamma \frac{u}{|x|^2} + \left(1+ \frac{\nu \alpha}{\tilde C^\beta}\right) u^{2^*-1} & \text{in}\quad\R^n \vspace{0.2cm}\\
	\displaystyle -\Delta v \,=\gamma \frac{v}{|x|^2} + \left(1+ \nu \beta \tilde C^\alpha \right) v^{2^*-1} & \text{in}\quad\R^n\\
	u,v> 0 &  \text{in}\quad\R^n \setminus \{0\}.
\end{cases}
\end{equation}
By dilatation and rescaling, exploiting \cite{terracini}, it follows that
$$(u,v)=(c_1 \mathcal{U}_{\mu_0}, c_2 \mathcal{U}_{\tilde \mu})$$
with $\mathcal{U}_\mu$ given by the expression \eqref{eq:terraciniane} and $\tilde \mu, \mu_0>0$. Moreover, by the proportionality of $u$ and $v$, one can deduce that $\tilde \mu = \mu_0$. Since $\tilde C$ satisfies $f(\tilde C)=0$, then 
$$
1+ \frac{\nu \alpha}{ \tilde C^\beta}=\frac{1}{\tilde C^{2^*-2}}+\frac{\nu \beta}{\tilde C^{\beta-2}}.
$$
Introducing such information in \eqref{eq:systemconstantC}, we can conclude that
$$
\displaystyle -\Delta \mathcal U_{\mu_0} \,=\gamma \frac{\mathcal U_{\mu_0}}{|x|^2} + \left(\frac{1}{\tilde C^{2^*-2}}+\frac{\nu \beta}{\tilde C^{\beta-2}}\right) c_1^{2^*-2}\mathcal U_{\mu_0}^{2^*-1}= \gamma \frac{\mathcal U_{\mu_0}}{|x|^2} + \left(1+ \nu \beta \tilde C^\alpha\right) c_2^{2^*-2}\mathcal U_{\mu_0}^{2^*-1} \quad  \text{in } \R^n,
$$
which immediately implies that $\tilde{C}=\frac{c_1}{c_2}$ and, moreover, $(c_1,c_2)$ solution to the system \eqref{eq:systemconstants}, which completes the proof.
\end{proof}

\bigskip

\end{document}